\newcounter{liste}
\newenvironment{lister}{\begin{list}%
{{\rm(\alph{liste})\hfill}}{%
\usecounter{liste}%
\setlength{\labelsep}{0ex}%
\settowidth{\labelwidth}{(w)\ }%
\settowidth{\leftmargin}{(w)\ }%
\addtolength{\topsep}{1.4\topsep}%
}}{\end{list}}
\newenvironment{lister*}{\begin{list}%
{{\rm---\hfill}}{%
\setlength{\labelsep}{0ex}%
\settowidth{\labelwidth}{---}%
\setlength{\leftmargin}{\parindent}%
\addtolength{\topsep}{1.4\topsep}%
}}{\end{list}}
\newtheorem{statem}{}[section]
\newtheorem{rem}[statem]{Remark}
\newtheorem*{acknow}{Acknowledgement}
\newtheorem{prop}[statem]{Proposition}
\newtheorem{lemma}[statem]{Lemma}
\newtheorem{theo}[statem]{Theorem}
\newtheorem{defi}[statem]{Definition}
\newtheorem{cor}[statem]{Corollary}
\newtheorem*{prob}{Problem}
\newtheorem{quest}[statem]{Question}
\newcommand{\trind}{\mathop{\mathrm{trind}}\nolimits}
\newcommand{\trInd}{\mathop{\mathrm{trInd}}\nolimits}
\newcommand{\Ind}{\mathop{\mathrm{Ind}}\nolimits}
\newcommand{\ind}{\mathop{\mathrm{ind}}\nolimits}
\newcommand{\Indo}{\mathop{\mathrm{Ind}_0}\nolimits}
\newcommand{\cl}{\mathop{\mathrm{cl}}\nolimits}
\renewcommand{\int}{\mathop{\mathrm{int}}\nolimits}
\newcommand{\bd}{\mathop{\mathrm{bd}}\nolimits}
\newcommand{\card}{\mathop{\mathrm{card}}\nolimits}
\newcommand{\id}{\mathop{\mathrm{id}}\nolimits}
\newcommand{\dInd}[1]{\mathop{#1\mbox{\rm -}\mathrm{Ind}}\nolimits}
\newcommand{\ddim}[1]{\mathop{#1\mbox{\rm -}\mathrm{dim}}\nolimits}
\newcommand{\dIndo}[1]{\mathop{#1\mbox{\rm -}\mathrm{Ind}_0}\nolimits}
\newcommand{\dobs}[1]{\mathop{#1\mbox{\rm -}\mathrm{obs}}\nolimits}
\newcommand{\dstr}[1]{\mathop{#1\mbox{\rm -}\mathrm{str}}\nolimits}
\title[Dimensions modulo an ANR and modulo a simplicial complex]{On dimensions modulo a compact metric ANR and modulo a simplicial complex}
\author[J.\ Krzempek]{Jerzy Krzempek}
\address{Institute of Mathematics\\
Silesian University of Technology\\
Kaszubska 23\\
44-100 Gliwice\\
Poland}
\email{j.krzempek@polsl.pl}
\keywords{ANR, simplicial complex, covering dimension, inductive dimension, non-coinciding dimensions, component.}
\subjclass[2000]{Primary 54F45.}
\begin{document}

\maketitle

\begin{abstract}
V.\ V.\ Fedorchuk has recently introduced dimension functions $K$-dim~$\leq$ $K$-Ind and $L$-dim~$\leq$ $L$-Ind, where $K$~is a
simplicial complex and $L$~is a compact metric ANR. For each complex $K$ with a non-contractible join $|K|\ast |K|$ (we write $|K|$
for the geometric realisation of~$K$), he has constructed first countable, separable compact spaces with $K$-dim~$<$ $K$-Ind.

In a recent paper we have combined an old construction by P.\ Vop\v enka with a new construction by V.\ A.\ Chatyrko, and have
assigned a certain compact space $Z(X,Y)$ to any pair of non-empty compact spaces $X,Y$. In this paper we investigate the behaviour of
the four dimensions under the operation $Z(X,Y)$. This enables us to construct examples of compact Fr\'echet spaces which have
$K$-dim~$<$ $K$-Ind, $L$-dim~$<$ $L$-Ind, or $K$-Ind~$<$ $|K|$-Ind, and (connected) components of which are metrisable. In particular,
given a natural number $n\geq 1$, an ordinal $\alpha\geq n$, and any metric continuum $C$ with $\mathop{L\mbox{\rm -}\mathrm{dim}}
C=n$, we obtain
\begin{list}{}{\setlength{\labelsep}{0ex}\setlength{\labelwidth}{\parindent}\setlength{\leftmargin}{\parindent}\setlength{\topsep}{1ex}}
\item[$\bullet$\hfill] a compact Fr\'echet space $X_{C,\alpha}$ such that $L$-dim$\,X_{C,\alpha}=n$, $L$-Ind$\,X_{C,\alpha}=\alpha$,
and each component of $X_{C,\alpha}$ is homeomorphic to~$C$.
\end{list}
If $L\ast L$ is non-contractible, or $n=1$ and $L$ is non-contractible, then $C$ can be a cube $[0,1]^m$ for a certain natural number
$m=m(n,L)$.
\end{abstract}

\section*{Introduction}

{\em All considered topological spaces are\/ $T_1$ and completely regular.} Let $K$ be a~fixed (finite) simplicial complex, $|K|$ its
geometric realisation, and $L$ a (compact metric) ANR. {\em We assume that\/ $|K|$ and\/ $L$ are non-contractible}\/\footnote{If $|K|$
or $L$ were contractible, then the considered dimension functions would be trivial (they would assign zero to each non-empty normal
space).}.

V.\ V.\ Fedorchuk [\ref{fed09}--\ref{fed11}] has begun the investigation of dimensions\footnote{\label{foot}Note that Fedorchuk
\cite{fed-09,fed-10b} has defined $\ddim{\mathcal{K}}$, $\ddim{\mathcal{L}}$, $\dInd{\mathcal{K}}$, $\dInd{\mathcal{L}}$ for
collections $\mathcal{K}$ and~$\mathcal{L}$ which consist of simplicial complexes and ANR's, respectively. However, in the present
paper each of $\mathcal{K}$ and~$\mathcal{L}$ has exactly one element, $K$ or $L$, and we write $\ddim{K}$, $\ddim{L}$, $\dInd{K}$,
$\dInd{L}$.} $\ddim{K} X$, $\ddim{L} X$, $\dInd{K} X$, $\dInd{L} X$ of normal spaces~$X$. There is a far reaching analogy between the
theories of $\ddim{K}$/$\dInd{K}$, $\ddim{L}$/$\dInd{L}$, and the classical $\dim$/$\Ind$. In particular, $\ddim{K} X\leq\dInd{K} X$,
$\ddim{L}\leq\dInd{L} X$, $\ddim{K} X=\ddim{|K|} X$, and $\dInd{K} X\leq\dInd{|K|} X$ if $X$ is normal. Moreover $\dInd{K}
X=\dInd{|K|} X$ if $X$~is hereditarily normal, and all the four dimensions for $K$ and~$|K|$ coincide if $X$~is metrisable. In
\cite{fed-10b}, for each natural number $n\geq 2$ and each simplicial complex $K$ with a non-contractible join $|K|\ast|K|$, Fedorchuk
has constructed a first countable, separable compact space $X_n$ such that $\ddim{K} X_n=n<2n-1\leq\dInd{K} X_n\leq 2n$.

Henceforth, let $\dInd{K}$ and $\dInd{L}$ denote the transfinite extensions of Fe\-dor\-chuk's $\dInd{K}$ and $\dInd{L}$.

In the joint paper \cite{chk-12} with M.\ G.\ Charalambous we have constructed first countable and separable continua $S_{n,\alpha}$
such that $\ddim{K} S_{n,\alpha}=n$ and $\dInd{K} S_{n,\alpha}=\alpha$, where $n\geq 1$ is any natural number, $\alpha\geq n$ is any
ordinal of cardinality at most~$\mathfrak{c}$, and moreover $n=1$ or the join $|K|\ast |K|$ is non-contractible. This may be
considered as a partial solution to the following.

\begin{prob}
Let $n$ be a natural number, $\alpha$ an ordinal, and\/ $1\leq n\leq\alpha$.
\begin{lister}
\item Under what circumstances do there exist compact spaces with $\ddim{K}=n$ and $\dInd{K}=\alpha$?
\item Can all components of such a space be metrisable?
\item What about $\ddim{L}$ and $\dInd{L}$?
\end{lister}
\end{prob}

In \cite{krz-10a} we have combined constructions by P.\ Vop\v enka \cite{vop-58} and V.\ A.\ Chatyrko \cite{chat-08}, and have
assigned a compact space $Z(X,Y)$ to any pair of non-empty compact spaces $X,Y$. Each component of $Z(X,Y)$ is homeomorphic to a
component of $X$ or~$Y$. This has allowed us to construct compact Fr\'echet spaces $X_{C,\alpha}$ such that $\dim X_{C,\alpha}=n$,
$\trind X_{C,\alpha}=\trInd X_{C,\alpha}=\alpha$, and all components of $X_{C,\alpha}$ are homeo\-mor\-ph\-ic to~$C$, where $C$~is any
metric continuum with $\dim C=n<\infty$ and $\alpha\geq n$ is any ordinal (\cite[Theorem 5]{krz-10a}).

In the present paper we investigate the behaviour of Fedorchuk's dimensions under the operation $Z(X,Y)$. We prove that $\ddim{L}
Z(X,X)=\ddim{L} X$ (the same holds for $K$), and under mild assumptions on~$X$, $\dInd{L} Z(X,X)=\dInd{L} X+1$. We use transfinite
induction, and answer the questions (b, c) together by constructing examples of spaces satisfying $\ddim{L}=n$ and $\dInd{L}=\alpha$
in all cases without obvious obstructions (see the Abstract, Theorem \ref{ANR-main}, and Corollary \ref{ANR-cor}).

In the case of $\dInd{K}$ we encounter serious difficulties because it often happens that $\dInd{K} Z(X,X)=\dInd{K} X$. We distinguish
two sorts of spaces $X$ which satisfy the equality $\dInd{K} X=\alpha$ weakly or strongly, and we formalise this by defining the {\em
dimensional strength degree}\/ $\dstr{K} X\in\{0,1\}$. We confine ourselves to the case of $K=\partial\Delta^k$, the simplicial
complex that consists of the proper faces of a $k$-dimensional simplex~$\Delta^k$. We show that the spectrum of
$\dstr{\partial\Delta^k}$ on the class of compact metric spaces is $\{0,1\}$. We prove that if $\dstr{\partial\Delta^k} X=1$, then
$\dInd{\partial\Delta^k} Z(X,X)=\dInd{\partial\Delta^k} X +1$.

Our approach enables us to obtain the following examples. Let $C$ be a metric continuum and $n\geq 1$. Then there exists
\begin{list}{}{\setlength{\labelsep}{0ex}\setlength{\labelwidth}{\parindent}\setlength{\leftmargin}{\parindent}\addtolength{\topsep}{1.4\topsep}}
\item[$\bullet$\hfill] a compact Fr\'echet space $X_C$ with $\ddim{\partial\Delta^k} X_C=n$, $\dInd{\partial\Delta^k} X_C=n+1$,
and components homeomorphic to~$C$ {\em whenever $k\geq 1$ and\/ $\dim C=k(n+1)-1$} (in\linebreak this case $\ddim{\partial\Delta^k}
C=n$ and $\dstr{\partial\Delta^k} C=1$);
\item[$\bullet$\hfill] a compact Fr\'echet space $X_C$ such that
$\ddim{\partial\Delta^k} X_C=\dInd{\partial\Delta^k} X_C=n$ while $\dInd{|\partial\Delta^k|} X_C=n+1$, and each component of $X_C$ is
homeomorphic to~$C$---{\em this example needs the assumptions that $k\!\geq\! 2$ and\/ $\dim C\!=\!kn$} (then $\ddim{\partial\Delta^k}
C\!=\!n$ and $\dstr{\partial\Delta^k} C=0$).
\end{list}
Using the latter series of examples, we answer Fedorchuk's \cite[Question 3.1]{fed-10b} in the negative: the equality
$\dInd{K}=\dInd{|K|}$ is not true outside the class of hereditarily normal spaces.

\begin{acknow} \em
I sincerely thank Michael G.\ Charalambous for his contribution to joint work preceding this paper, for correspondence and apt
remarks, particularly an idea that made the original proof of Theorem \ref{zero-product} much simpler.
\end{acknow}

\section{Notation, basic definitions and facts}

In this paper {\em maps} and their {\em extensions} are meant to be continuous. A {\em continuum}\/ is a non-empty, connected compact
space. By $\mathbb{N}$ we denote the set of natural numbers, and $0\in\mathbb{N}$ is also the first ordinal. We write $A_\mathfrak{m}$
for the one-point compactification of the discrete space of cardinality $\mathfrak{m}$, and $\mu\in A_\mathfrak{m}$ is the unique
non-isolated point. In most cases we employ the terminology used in R.\ Engelking's\linebreak monographs \cite{eng-89,eng-95}.

We write $K$ for a (finite) simplicial complex with distinct vertices $e_0,\ldots,e_k$ in a Euclidean space, $|K|$ for the geometric
realisation of~$K$ (the underlying poly\-he\-dron), and $L$ for a (compact metric) ANR. {\em We assume that both\/ $|K|$ and\/ $L$ are
non-contractible.} $\Delta^k$ stands for the $k$-dimen\-sion\-al simplex with vertices $e_0,\ldots,e_k$, and $\partial\Delta^k$ for
the simplicial complex that consists of all at most $(k-1)$-dimen\-sion\-al faces of~$\Delta^k$. Of course, $|K|$ is always an~ANR,
and $|\partial\Delta^k|$ is homeomorphic to the sphere $S^{k-1}$. By $\{0,1\}$ we denote $\partial\Delta^1$, a simplicial complex that
has two vertices and no edge.

Let $X$ be a space, $A\subset X$, and $f\colon A\to L$ a map. An open set $U\subset X$ is called an {\em $L$-neighbourhood}\/ of~$f$
in~$X$ provided that $f$ has an extension from $U$ to~$L$. Then $P=X\setminus U$ is called an {\em $L$-partition}\/ in~$X$ for~$f$.
Since $L$~is an ANR, {\em every map $f\colon F\to L$ from a closed subset $F$ of a normal space $X$ has an $L$-neighbourhood and an
$L$-partition in~$X$.}

We adopt a convention, by which we use calligraphic letters $\mathcal{A}$, $\mathcal{B}$, etc.\ to denote {\em $(k+1)$-tuples}\/
$(A_0,\ldots,A_k)$, $(B_0,\ldots,B_k)$, etc.\ of subsets of any given space~$X$. A $(k+1)$-tuple $\mathcal{A}$ of $X$ is said to be
{\em open}\/ [respectively:\ {\em closed}\/] if $A_0,\ldots,A_k$ are open [respectively:\ closed] in~$X$. We write
$\cl\mathcal{A}=(\cl A_0,\ldots,\cl A_k)$, $\mathcal{A}|E=(A_0\cap E,\ldots,$ $A_k\cap E)$ for any subset $E$ of~$X$, $f(\mathcal{A})
=(f(A_0),\ldots,f(A_k))$ for a map $f$ defined on $X$, etc. A $(k+1)$-tuple $\mathcal{A}$ is called a {\em $K$-tuple}\/ provided that,
if $I\subset\{0,\ldots,k\}$ and $\bigcap_{i\in I} A_i\neq\emptyset$, then $\{e_i\colon i\in I\}$ is the vertex set of a certain
simplex in~$K$. We write $\bigcup\mathcal{A}=\bigcup_{i=0}^k A_i$. If $\mathcal{A}$~is an open $K$-tuple of $X$, we call
$P=X\setminus\bigcup\mathcal{A}$ the {\em $K$-partition corresponding to} $\mathcal{A}$; if moreover $B_i\subset A_i$ for
$i=0,\ldots,k$, we say that $\mathcal{A}$~is a {\em $K$-neigh\-bour\-hood of $\mathcal{B}$} and $P$~is a {\em $K$-partition
for~$\mathcal{B}$.}

We shall frequently use this simple corollary to \cite[Theorem 7.1.4]{eng-89}: {\em Every closed\/ $K$-tuple of a normal space has a\/ $K$-neighbourhood\/ $\mathcal{U}$ such that\/ $\cl\mathcal{U}$
is a $K$-tuple.}

\begin{defi}[Fedorchuk\footnote{See the remark in Footnote \ref{foot}.} {\cite[Definition 3.4]{fed-09}}] \label{def1}
\em Let $M$ be a simplicial complex. For normal spaces~$X$, the dimension $\ddim{M} X\!\in\!\mathbb{N}\cup\{-1,\infty\}$ is defined as
follows.
\begin{lister}
\item $\ddim{M} X=-1$ iff $X$ is empty.
\item When $n\in\mathbb{N}$, $\ddim{M} X\leq n$ iff for each sequence of closed $M$-tuples $\mathcal{F}^0,\ldots,\mathcal{F}^n$ of
$X$ there are $M$-partitions $P^i$ for $\mathcal{F}^i$, $i=0,\ldots,n$, so that $\bigcap_{i=0}^n P^i=\emptyset$.
\item $\ddim{M} X=\min\{n\in\mathbb{N}\colon\ddim{M} X\leq n\}$, where $X\neq\emptyset$ and $\min\emptyset=\infty$.
\end{lister}
\end{defi}

See Fe\-dor\-chuk \cite[Section 1]{fed-09} for information about the join $X\ast Y$ of compact spaces $X,Y$. At this place, let us
recall these two facts: $(X\ast Y)\ast Z$ is $X\ast (Y\ast Z)$ up to homeomorphism, and if $X,Y$ are ANR's, then so is $X\ast Y$. The
join $X\ast\ldots\ast X$ of $n$ copies of $X$ will be denoted by $X^{\ast n}$.

\begin{defi}[cf.\ Fedorchuk {\cite[Definition 3.9, Corollary 3.13]{fed-09}} and {\cite[Definition 1.14]{fed-10b}}]
\em Let $M$ be an ANR. Then $\ddim{M} X\in\mathbb{N}\cup\{-1,\infty\}$, where $X$ is any normal space, is defined so that it satisfies
the statements (a, c) of Definition \ref{def1} and the following statement (b') instead of~(b).
\begin{lister}
\item[{\rm(b')\hfill}] When $n\in\mathbb{N}$, $\ddim{M} X\leq n$ iff every map $f\colon F\to M^{\ast (n+1)}$ from a closed subspace $F$
of~$X$ has an extension from $X$ to $M^{\ast(n+1)}$.
\footnote{The author prefers the following equivalent statement.
\begin{list}{}{\setlength{\labelsep}{0ex}\settowidth{\labelwidth}{(b'')\ }\settowidth{\leftmargin}{(b'')\ }\setlength{\topsep}{1ex}}
\item[{\rm(b'')\hfill}]
When $n\in\mathbb{N}$, $\ddim{M} X\leq n$ iff for every $n+1$ maps $f_i\colon F_i\to M$, $i=0,\ldots,n$, where each $F_i\subset X$ is
closed, there is an $M$-partition $P^i$ in~$X$ for each $f_i$ such that $\bigcap_{i=0}^n P^i=\emptyset$ (cf.\ \cite[Theorem
1.31]{fed-10b} and Theorem \ref{gen-relations} herein).
\end{list}
However yet more, we prefer to avoid unnecessary analysis of the definition and extensions.}
\end{lister}
\end{defi}

\begin{defi}[cf.\ Fedorchuk {\cite[Definition 2.1]{fed-10b}} and {\cite[Definition 2.16]{fed-11}}] \label{def3}
\em Let $M$ be a simplicial complex. The inductive dimension\footnote{Fedorchuk's original $\dInd{K} X$ and $\dInd{L} X$ in
\cite{fed-10b} are natural numbers, $-1$, or $\infty$. Following \cite{chk-12}, we allow both $\dInd{K} X$ and $\dInd{L} X$ to be an
infinite ordinal.} $\dInd{M} X\!\in\! \mbox{\em Ordinals\/}\cup\{-1,\infty\}$ is defined for normal spaces $X$ as follows.
\begin{lister}
\item $\dInd{M} X=-1$ iff $X$ is empty.
\item When $\alpha$ is an ordinal, $\dInd{M} X\leq \alpha$ iff for every closed $M$-tuple $\mathcal{F}$ of $X$ there is an
$M$-partition~$P$ such that $\dInd{M} P<\alpha$.
\item $\dInd{M} X=\min\{\alpha\colon\dInd{M} X\leq \alpha\}$, where $X\neq\emptyset$ and $\min\emptyset=\infty$.
\end{lister}
\end{defi}

\begin{defi}[cf.\ Fedorchuk {\cite[Definition 2.3]{fed-10b}} and {\cite[Definition 2.14]{fed-11}}] \label{def4}
\em Let $M$ be an ANR. For normal spaces $X$, the dimension $\dInd{M} X\in \mbox{\em Ordinals\/}\cup\{-1,\infty\}$ is defined so that
it satisfies the statements (a, c) of Definition \ref{def3} and the following statement (b') instead of~(b).
\begin{lister}
\item[(b')\hfill] When $\alpha$ is an ordinal, $\dInd{M} X\leq \alpha$ iff for every map $f\colon F\to M$ from a closed subset $F$ of $X$
there is an $M$-partition~$P$ such that $\dInd{M} P<\alpha$.
\end{lister}
\end{defi}

It is evident that $\ddim{\{0,1\}} X=\dim X$ and $\dInd{\{0,1\}} X=\trInd X$ for normal spaces $X$, no matter whether we treat
$\{0,1\}$ as a simplicial complex or as an~ANR.

Let us recall the following well-known facts on homotopy equivalence.

\begin{theo}[J.\ E.\ West \cite{wes-77}] \label{west}
Every compact metric ANR is homotopy equivalent to a compact polyhedron.\qed
\end{theo}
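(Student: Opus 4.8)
This is J.\ E.\ West's affirmative solution to Borsuk's conjecture, so the plan is to recall the architecture of its proof rather than to reinvent it; no elementary argument is known. The cleanest route passes through Hilbert cube manifold theory. Write $Q=[0,1]^{\mathbb N}$ for the Hilbert cube, and recall that $Q$ is an AR, hence contractible. The key geometric input is R.\ D.\ Edwards' theorem: if $Y$ is a locally compact, separable metric ANR, then $Y\times Q$ is a $Q$-manifold, i.e.\ a space every point of which has a neighbourhood homeomorphic to an open subset of~$Q$. In particular, for a compact metric ANR $X$ the product $X\times Q$ is a \emph{compact} $Q$-manifold.

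The second ingredient is T.\ A.\ Chapman's triangulation theorem for $Q$-manifolds: every compact $Q$-manifold is homeomorphic to $|K|\times Q$ for some finite simplicial complex~$K$. Applying it to $X\times Q$ yields a homeomorphism $X\times Q\cong|K|\times Q$. Since $Q$ is contractible, both coordinate projections $X\times Q\to X$ and $|K|\times Q\to|K|$ are homotopy equivalences, so
\[
X\;\simeq\;X\times Q\;\cong\;|K|\times Q\;\simeq\;|K|,
\]
and $X$ is homotopy equivalent to the compact polyhedron $|K|$, as required.

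The whole difficulty is concentrated in the two cited black boxes. Edwards' theorem and Chapman's triangulation theorem are among the deepest results of infinite-dimensional topology; establishing either from scratch would be a paper (indeed a book) in itself. A more algebraic repackaging of West's argument avoids Chapman's theorem at the price of Wall's finiteness obstruction: (i) Borsuk's theorem that a compact ANR is homotopy dominated by a finite polyhedron, hence finitely dominated; (ii) Wall's theorem that a finitely dominated space $X$ has the homotopy type of a finite CW complex (equivalently, of a compact polyhedron) if and only if its obstruction $w(X)\in\widetilde K_0(\mathbb Z[\pi_1 X])$ vanishes; and (iii) West's verification, again by $Q$-manifold methods, that $w(X)=0$ for every compact ANR. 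Step (iii) is the genuine heart of the matter, which is why the statement is merely quoted here rather than proved.
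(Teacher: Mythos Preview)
Your outline is a correct high-level account of West's proof via Edwards' $Q$-manifold theorem and Chapman's triangulation theorem (with the Wall-obstruction variant noted as an alternative packaging). However, the paper itself offers no proof whatsoever: the \texttt{\textbackslash qed} immediately following the statement signals that the result is simply quoted from \cite{wes-77} as a black box. So rather than taking a different route, you have supplied substantially more than the paper does; the paper's ``proof'' is just the citation.
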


\begin{theo}[Fedorchuk {\cite[Proposition 4.5]{fed-09}} and {\cite[Theorem 3.3]{fed-11}}] \label{ANR-eqiv-eq}
If ANR's $L_1$ and $L_2$ are homotopy equivalent, then
$$\ddim{L_1} X=\ddim{L_2} X \mbox{ \ \ \ and \ \ \ } \dInd{L_1} X=\dInd{L_2} X$$
for every normal space $X$.\qed
\end{theo}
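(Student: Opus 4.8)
The plan is to prove the two inequalities $\ddim{L_1}X\le\ddim{L_2}X$ and $\dInd{L_1}X\le\dInd{L_2}X$; interchanging the roles of $L_1$ and $L_2$ then yields the asserted equalities. Fix maps $\phi\colon L_1\to L_2$ and $\psi\colon L_2\to L_1$ realising the homotopy equivalence, so that $\psi\circ\phi\simeq\id_{L_1}$ and $\phi\circ\psi\simeq\id_{L_2}$. Everything will rest on a transfer lemma for an arbitrary compact metric ANR $Y$: \emph{if $F$ is a closed subset of a normal space $X$ and $g_0,g_1\colon F\to Y$ are homotopic, then $g_0$ and $g_1$ have the same $Y$-partitions in $X$}. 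This is just the homotopy extension theorem for ANR's. Indeed, if $P$ is a $Y$-partition for $g_0$ and $U=X\setminus P$, then $g_0$ has an extension $\bar g_0\colon U\to Y$, and a homotopy $H\colon F\times I\to Y$ from $g_0$ to $g_1$ together with $\bar g_0$ defines a map on $U\times\{0\}\cup F\times I$; embedding $Y$ as a neighbourhood retract of the Hilbert cube $Q$ and using that $Q$ is an absolute extensor for normal spaces, one extends this map to a homotopy $\widetilde H\colon U\times I\to Y$, and $\widetilde H_1\colon U\to Y$ extends $g_1$, so $P$ is a $Y$-partition for $g_1$ as well. (By Theorem \ref{west} one could even first replace each $L_i$ by a homotopy-equivalent polyhedron, but this is not needed.)

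Granting the lemma, the statement for $\ddim$ is immediate from the partition form (b'') of the definition. Assume $\ddim{L_1}X\le n$, and let $g_i\colon F_i\to L_2$ be given for $i=0,\dots,n$, with each $F_i$ closed in $X$. By (b'') applied to the maps $\psi\circ g_i\colon F_i\to L_1$ there are $L_1$-partitions $P^i$ for $\psi\circ g_i$ with $\bigcap_{i=0}^n P^i=\emptyset$; composing the corresponding extensions with $\phi$ shows that each $P^i$ is an $L_2$-partition for $\phi\circ\psi\circ g_i$. Since $\phi\circ\psi\circ g_i\simeq g_i$, the transfer lemma makes $P^i$ an $L_2$-partition for $g_i$, and $\bigcap_{i=0}^n P^i=\emptyset$ already holds; hence $\ddim{L_2}X\le n$, and by symmetry $\ddim{L_1}X=\ddim{L_2}X$. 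One may instead use the form (b'): the join of homotopic maps is a homotopy, so $\phi^{\ast(n+1)}$ and $\psi^{\ast(n+1)}$ realise a homotopy equivalence between the ANR's $L_1^{\ast(n+1)}$ and $L_2^{\ast(n+1)}$, and one transfers extensions of maps into $L_i^{\ast(n+1)}$ in exactly the same way.

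For $\dInd$ I would argue by transfinite induction, proving simultaneously for $L_1$ and $L_2$ that $\dInd{L_1}X\le\alpha\iff\dInd{L_2}X\le\alpha$ for every normal $X$. The case $\alpha=-1$ is trivial, since $\dInd{M}X=-1$ precisely when $X=\emptyset$, regardless of $M$. For the inductive step assume the equivalence for all ordinals below $\alpha$ and suppose $\dInd{L_1}X\le\alpha$. Given a map $f\colon F\to L_2$ with $F$ closed in $X$, apply the hypothesis to $\psi\circ f\colon F\to L_1$: there is an $L_1$-partition $P$ for $\psi\circ f$ with $\dInd{L_1}P<\alpha$. As before, $P$ is an $L_2$-partition for $\phi\circ\psi\circ f\simeq f$, so by the transfer lemma $P$ is an $L_2$-partition for $f$. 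By the induction hypothesis applied to the space $P$ and the ordinal $\dInd{L_1}P<\alpha$ we get $\dInd{L_2}P=\dInd{L_1}P<\alpha$. Thus every such $f$ admits an $L_2$-partition of $\dInd{L_2}$ below $\alpha$, i.e.\ $\dInd{L_2}X\le\alpha$; symmetry completes the induction, and hence $\dInd{L_1}X=\dInd{L_2}X$.

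I expect the transfer lemma to be the only real obstacle, and only for a genuinely non-metrisable $X$: the set $U=X\setminus P$ is merely an open subspace of the normal space $X$, so it need not be normal, and then $U\times I$ need not be normal either, which is exactly what the last extension step above requires. For perfectly normal $X$ (in particular for metrisable $X$, and for the well-behaved compacta actually used in the paper) the subspace $U$ is again perfectly normal, hence countably paracompact, so $U\times I$ is normal and the classical homotopy extension theorem for ANR's applies verbatim. For an arbitrary normal $X$ one has to be more careful --- for instance by performing the homotopy extension inside $X$ itself and then shrinking the resulting open sets within $X$ --- and this is where the bulk of the technical effort lies; once the lemma is in place, the rest is a formal consequence of the definitions of $\ddim{L}$ and $\dInd{L}$.
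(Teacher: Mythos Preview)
The paper does not give its own proof of this statement: it is quoted from Fedorchuk \cite{fed-09,fed-11} and closed with a bare \qed, so there is nothing in the paper to compare your argument against directly.

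Your outline is the natural one and is structurally sound: everything reduces to the transfer lemma (homotopic maps $F\to L$ have the same $L$-partitions in $X$), after which the $\ddim{L}$ equality follows from either form (b') or (b'') of the definition, and the $\dInd{L}$ equality follows by the transfinite induction you wrote out. You are also right that the only real issue is the transfer lemma for a merely normal $X$, since $U=X\setminus P$ need not be normal and hence your Hilbert-cube extension over $U\times I$ is not justified as stated.

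Your instinct to ``perform the homotopy extension inside $X$ itself'' is exactly the fix, and it can be made precise without any shrinking. Embed $L$ in $Q=[-1,2]^{\aleph_0}$ with a retraction $r\colon N\to L$ from an open $N\supset L$, and set $\varepsilon=\dist(L,Q\setminus N)>0$. Subdivide the homotopy $G\colon L\times I\to L$ from $\phi\circ\psi$ to $\id_L$ into stages $G_{t_0},\dots,G_{t_m}$ with $d(G_{t_{j-1}}(y),G_{t_j}(y))<\varepsilon$ for all $y$. If $\bar g_{j-1}\colon X\setminus P\to L$ already extends $G_{t_{j-1}}\circ g$, form the difference $\eta_j\colon F\to\overline{\mathrm B(0,\varepsilon)}\subset[-1,1]^{\aleph_0}$, $\eta_j(x)=G_{t_j}(g(x))-G_{t_{j-1}}(g(x))$; the closed ball is a compact metric AR, hence an absolute extensor for the normal space $X$, so $\eta_j$ extends to $\bar\eta_j\colon X\to\overline{\mathrm B(0,\varepsilon)}$, and then $\bar g_j=r\circ(\bar g_{j-1}+\bar\eta_j|_{X\setminus P})$ maps $X\setminus P$ into $L$ and extends $G_{t_j}\circ g$. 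After $m$ steps one obtains an extension of $g=G_1\circ g$ over the \emph{same} $X\setminus P$. Note that this is precisely the device the paper itself uses in the proof of Theorem~\ref{zero-product}(c) (the maps $d^x$, $e^x$ there play the role of $\eta_j$, $\bar\eta_j$), so your proposal is fully in line with the paper's toolkit; with this patch the argument is complete for arbitrary normal $X$.
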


It follows from the foregoing two theorems that when we investigate relations between the four dimensions $\ddim{K}$, $\ddim{L}$,
$\dInd{K}$, and $\dInd{L}$, it is sufficient to consider only simplicial complexes $K$ and their geometric realisations $L=|K|$.

\begin{theo}[Fedorchuk \cite{fed-09,fed-10b,fed-11}] \label{gen-relations}
Suppose that $X$ is a normal space. Then
$$\ddim{K} X=\ddim{|K|} X \mbox{ \ \ \ and \ \ \ }\dInd{K} X\leq\dInd{|K|} X.$$

If $X$ is hereditarily normal, then
$$\dInd{K} X=\dInd{|K|} X.$$

If either $\ddim{K} X$ or $\dInd{K} X$ is finite, then $$\ddim{K} X\leq\dInd{K} X.$$

If $X$ is metrisable and $\ddim{K} X$ is finite, then all four of the dimensions of $X$ coincide.
\end{theo}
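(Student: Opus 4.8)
The plan is to reduce the whole statement to standard facts about the functions $\ddim{K}$, $\dInd{K}$ for simplicial complexes together with the translation between $K$-tuples and maps into $|K|$-type targets. The first and third equalities/inequalities, namely $\ddim{K}X=\ddim{|K|}X$ and ($\ddim{K}X\leq\dInd{K}X$ when one side is finite), are established by Fedorchuk and will simply be quoted; I would cite \cite[Theorem 1.31]{fed-10b} (or the equivalent (b'') formulation) for the first, since there the closed $K$-tuple formulation of $\ddim{K}$ and the map-into-$|K|^{\ast(n+1)}$ formulation of $\ddim{|K|}$ are shown to agree, and cite \cite[Theorem 2.?]{fed-10b} (the $\ddim{M}\leq\dInd{M}$ inequality) for the third. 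The fourth claim, that on metrisable $X$ with $\ddim{K}X$ finite all four dimensions coincide, then follows by combining $\ddim{K}X=\dim$-type coincidence theorems from \cite{fed-10b,fed-11} with the hereditary normality of metric spaces (which gives $\dInd{K}X=\dInd{|K|}X$ from the second part) and the classical equality $\trInd X=\Ind X=\dim X$ on metric spaces of finite dimension.

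The genuine content to reprove is the middle pair: $\dInd{K}X\leq\dInd{|K|}X$ for normal $X$, and $\dInd{K}X=\dInd{|K|}X$ for hereditarily normal $X$. Both are proved by transfinite induction on the ordinal $\alpha$ bounding $\dInd{|K|}X$ (respectively $\dInd{K}X$). For the inequality, suppose $\dInd{|K|}X\leq\alpha$ and let $\mathcal{F}=(F_0,\dots,F_k)$ be a closed $K$-tuple of $X$. Using that $|K|$ is an ANR one builds, from the $F_i$, a partial map into $|K|$ that ``remembers'' which face each point of $\bigcup\mathcal{F}$ lies on: concretely, on the closed set $F=\bigcup_{i}F_i$ one has a natural map $f\colon F\to|K|$ sending a point in $\bigcap_{i\in I}F_i$ (but in no larger intersection) into the open star / barycentre region of the simplex spanned by $\{e_i:i\in I\}$ — this is where the $K$-tuple hypothesis is used, guaranteeing such a simplex exists. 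Apply the definition of $\dInd{|K|}X\leq\alpha$ to $f$ to get an $|K|$-partition $P$ with $\dInd{|K|}P<\alpha$; the preimages under the extension $\tilde f\colon X\setminus P\to|K|$ of the open stars of the vertices form an open $K$-tuple $\mathcal{U}$ with corresponding $K$-partition contained in $P$, so by monotonicity and the inductive hypothesis $\dInd{K}(\text{that partition})\leq\dInd{K}P\leq\dInd{|K|}P<\alpha$. Hence $\dInd{K}X\leq\alpha$.

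For the equality under hereditary normality one argues in the reverse direction: given $\dInd{K}X\leq\alpha$ and a map $f\colon F\to|K|$ with $F\subset X$ closed, one must produce an $|K|$-partition $P$ with $\dInd{|K|}P<\alpha$. Pull back the closed cover of $|K|$ by the closed stars of the vertices to get closed sets $F_i=f^{-1}(\overline{\mathrm{st}}(e_i))$ in $F$; these form a closed $K$-tuple of $F$, and — this is the step that needs hereditary normality — they can be separated/expanded to a closed $K$-tuple of all of $X$ (using that $F$ is closed and that closed $K$-tuples in the subspace extend, via normality of $X$ and the swelling lemma \cite[Theorem 7.1.4]{eng-89} quoted in the text, with hereditary normality ensuring the relevant subspace $\overline{U}\setminus P$ behaves well). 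Applying $\dInd{K}X\leq\alpha$ gives a $K$-partition $P$ with $\dInd{K}P<\alpha$; the open $K$-tuple exhibits, on $X\setminus P$, enough combinatorial data to extend $f$ over $X\setminus P$ into $|K|$ by a partition-of-unity / nerve-map argument (again using that $|K|$ is an ANR and in fact a polyhedron), so $P$ is an $|K|$-partition, and by the inductive hypothesis $\dInd{|K|}P=\dInd{K}P<\alpha$. I expect the main obstacle to be precisely this last extension step in the hereditarily normal case: getting the map $f$ extended across $X\setminus P$ requires carefully matching the combinatorics of the open $K$-tuple with the simplicial structure, and it is exactly here that hereditary normality (to control the trace of the $K$-tuple on the subspace $F$ and on intermediate closed sets) cannot be dispensed with — which is consistent with the paper's later announcement that $\dInd{K}=\dInd{|K|}$ fails outside the hereditarily normal class.
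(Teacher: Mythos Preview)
Your approach is essentially the same as the paper's: the paper cites Fedorchuk for all parts and remarks that the two inductive inequalities can alternatively be obtained by transfinite induction via \cite[Lemmas 6--7]{chk-12} (the latter being Lemma~\ref{extend} here), which is exactly the two-way translation between closed $K$-tuples and partial maps into $|K|$ that you sketch.

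Two small corrections. First, in the reverse direction you pull back the \emph{closed stars} $\overline{\mathrm{st}}(e_i)$ to obtain a closed $K$-tuple; this fails in general, since closed stars of non-adjacent vertices can meet at a third vertex. Use instead the closed $K$-cover $\mathcal{K}=(K_0,\dots,K_k)$ with $K_i=\{x\in|K|:x_i\geq\frac{1}{k+1}\}$ defined just before Lemma~\ref{extend}; then $f^{-1}(\mathcal{K})$ is genuinely a closed $K$-tuple, already closed in $X$ because $F$ is closed (no ``expansion'' step is needed). Second, the place hereditary normality is actually used is not in producing the $K$-tuple but in the final extension: Lemma~\ref{extend} requires the ambient space to be normal, and here that ambient space is the \emph{open} set $X\setminus P$, whose normality is guaranteed only by hereditary normality of $X$ (and hereditary normality of $P$ is needed to run the inductive hypothesis on $P$). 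You do identify the extension step as the crux, so this is only a matter of locating the hypothesis precisely.
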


\begin{proof}[Remarks on proofs.]
All of the inequalities and equalities have been shown by Fedorchuk. The equality $\ddim{K} X=\ddim{|K|} X$ is \cite[Theorem
4.8]{fed-09}.

The inequality $\dInd{K} X\leq\dInd{|K|} X$ is \cite[(3.1)]{fed-10b} and \cite[Theorem 2.22]{fed-11}. One can also prove it easily
using \cite[Lemma 6]{chk-12} and induction on $\alpha=\dInd{|K|}X$.

The inequality $\dInd{|K|} X\leq\dInd{K} X$ for hereditarily normal spaces $X$ is in \cite[Theorem 2.22]{fed-11} (cf.\ also
\cite[Theorem 2.4]{fed-10b}). To prove it, one can also apply \cite[Lemma 7]{chk-12} (Lemma \ref{extend} herein) and induction on
$\alpha=\dInd{K} X$.

The other assertions are \cite[Theorems 3.18 and 3.23]{fed-10b}.
\end{proof}

The topic of dimension-lowering maps for $\ddim{K}$ and $\ddim{L}$ is more complex than in the case of $\dim$ (see \cite[Section
7]{fed-09}). However, there is

\begin{theo}[cf.\ Fedorchuk {\cite[Theorem~3.24]{fed-10b}}] \label{compon1}
If $X$ is a compact space, then
$$\ddim{L} X=\sup\{\ddim{L} P\colon P\mbox{ is a component of } X\}.$$
\end{theo}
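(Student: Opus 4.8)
The plan is to prove the equality by establishing the two inequalities separately, using the definition of $\ddim{L}$ via statement (b') applied to the join $L^{\ast(n+1)}$.

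First I would prove $\ddim{L} X \geq \sup\{\ddim{L} P : P \text{ a component of } X\}$. This is the easy direction: it amounts to showing $\ddim{L}$ is monotone with respect to closed (even arbitrary closed) subspaces, since components of a compact space are closed. Given a closed $F' \subset P \subset X$ and a map $f\colon F' \to L^{\ast(n+1)}$, one uses that $L^{\ast(n+1)}$ is an ANR (joins of ANR's are ANR's, as recalled after Definition~\ref{def1}) to extend $f$ first to an open neighbourhood in $X$, then, via a Tietze-type argument over the normal space $X$, to a closed neighbourhood of $F'$ in $X$, and finally --- if $\ddim{L} X \leq n$ --- to all of $X$; restricting back to $P$ gives the desired extension. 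Hence $\ddim{L} P \leq \ddim{L} X$ for every component $P$.

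The substantive direction is $\ddim{L} X \leq \sup\{\ddim{L} P : P \text{ a component of } X\}$. Suppose every component $P$ satisfies $\ddim{L} P \leq n$; I must show $\ddim{L} X \leq n$, i.e. any $f\colon F \to L^{\ast(n+1)}$ with $F \subset X$ closed extends over $X$. The standard approach for compact spaces: the quotient map $q\colon X \to X/{\sim}$ onto the (zero-dimensional, compact, Hausdorff) space of components is monotone with compact fibres. One first extends $f$ to a closed neighbourhood, using that $L^{\ast(n+1)}$ is an ANR. Then, for each component $P$, by hypothesis $f|_{F \cap P}$ extends to a map $P \to L^{\ast(n+1)}$; the goal is to glue these component-wise extensions together continuously. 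The natural tool is a Kuratowski--Wojdys{\l}awski / ANR-approximation argument: embed $L^{\ast(n+1)}$ as a closed subset of a normed linear space (or of the Hilbert cube), extend $f$ to a genuine map $g\colon U \to L^{\ast(n+1)}$ on an open $U \supset F$, and then use compactness of $X$ together with zero-dimensionality of the component space to cover $X$ by finitely many clopen-in-$X/{\sim}$ pieces, on each of which one already has an extension, patched over their (empty) overlaps --- plus the piece coming from $U$. Since distinct saturated clopen pieces are disjoint, no compatibility conditions arise, and the resulting map is well-defined and continuous.

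The main obstacle is making the gluing rigorous: component-wise extensions exist by hypothesis but there is no reason an arbitrary family of them fits together continuously, so one cannot simply take a union. The fix is to exploit that $L^{\ast(n+1)}$ is an ANR to get an extension over an \emph{open} set first, and that the quotient onto the component space is zero-dimensional, so that a suitable open cover of $X/{\sim}$ refines to a partition into clopen sets; pulling back, $X$ decomposes into finitely many disjoint clopen pieces each contained either in $U$ or in a single component's domain of extension, which trivialises the patching. I would also need the elementary facts that in a compact Hausdorff space the components coincide with the quasicomponents and that $X/{\sim}$ is zero-dimensional, and the ANR extension lemma for closed subsets of normal spaces recalled in Section~1. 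With these in place the argument is routine.
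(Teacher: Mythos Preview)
Your argument is essentially correct, though the exposition is a bit tangled in the hard direction. The clean version of what you are doing is: for each component $P$ extend $f|_{F\cap P}$ to $h_P\colon P\to L^{\ast(n+1)}$, glue $f$ and $h_P$ on the closed set $F\cup P$, use the ANR property to extend this to an open $W_P\supset F\cup P$, and then use closedness of the quotient map $q$ together with $\dim(X/{\sim})=0$ to find a clopen saturated $C_P$ with $P\subset C_P\subset W_P$. Since $\tilde h_P|_F=f$, the restriction $\tilde h_P|_{C_P}$ agrees with $f$ on $F\cap C_P$, so after refining a finite subcover of the $C_P$'s to a clopen partition of $X$ you can patch with no compatibility conditions. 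Your ``either in $U$'' alternative is unnecessary and slightly obscures this; the role of the preliminary ANR extension $g\colon U\to L^{\ast(n+1)}$ is already absorbed once you glue $f$ with $h_P$ on $F\cup P$ before invoking the ANR property.

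This is \emph{not} the route the paper takes. The paper's proof is a two-line reduction: first replace $L$ by a polyhedron $|K|$ via West's theorem and homotopy invariance (Theorems~\ref{west} and~\ref{ANR-eqiv-eq}), pass to the combinatorial dimension $\ddim{K}$ via Theorem~\ref{gen-relations}, and then simply cite Fedorchuk's dimension-lowering theorem \cite[Theorem~3.24]{fed-10b} for the quotient map $q\colon X\to X/\mathcal{D}$ onto the zero-dimensional space of components. So the paper outsources the substantive work to Fedorchuk's result for $\ddim{K}$, whereas you prove the ANR version directly. Your approach is more self-contained and avoids the detour through West's theorem; the paper's is shorter given the available literature.
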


\begin{proof}
By Theorems \ref{west} and \ref{ANR-eqiv-eq}, it is sufficient to consider $L=|K|$. Theorem~\ref{gen-relations} yields the equalities
$\ddim{|K|} X=\ddim{K} X$ and $\ddim{|K|} P=\ddim{K} P$ for each component $P$ of $X$. Consider the decomposition $\mathcal{D}$ of $X$
into the components of~$X$ and the quotient map $q\colon X\to X/\mathcal{D}$. The quotient space $X/\mathcal{D}$ is compact and $\dim
X/\mathcal{D}=0$ unless $X$ is empty. The requested equality results from Fedorchuk's \cite[Theorem 3.24]{fed-10b} applied to~$q$.
\end{proof}

\begin{theo}\label{equiv}
Suppose that\/ $k,n\geq 1$ are natural numbers, and\/ $X$ is a metric space. Then
$$\dInd{\partial\Delta^{k}} X<n \mbox{ \ \ \ iff \ \ \ }\ddim{|\partial\Delta^{k}|} X<n \mbox{ \ \ \ iff \ \ \ }\dim X<kn.$$
\end{theo}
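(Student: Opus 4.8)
The plan is to establish the two equivalences by a combination of the general relations for simplicial complexes versus their realisations and a direct analysis of maps into spheres. Since the statement concerns metric (hence hereditarily normal) spaces, Theorem~\ref{gen-relations} gives $\dInd{\partial\Delta^{k}} X=\dInd{|\partial\Delta^{k}|} X$, so the first dimension in the chain equals an inductive dimension with respect to the ANR $S^{k-1}=|\partial\Delta^{k}|$; this reduces everything to comparing $\dInd{|\partial\Delta^k|}$, $\ddim{|\partial\Delta^k|}$, and $\dim$ on metric spaces. The anchor case $n=1$ should be isolated first: $\dInd{|\partial\Delta^k|} X<1$ means $\dInd{|\partial\Delta^k|} X\le 0$, i.e.\ every map from a closed subset of $X$ into $S^{k-1}$ extends over $X$ (the $S^{k-1}$-partition can be taken empty), which is precisely $\ddim{|\partial\Delta^k|} X\le 0$, i.e.\ $\ddim{|\partial\Delta^k|} X<1$; and by the classical theory (maps into $S^{k-1}$ extend over a metric space iff the space has covering dimension $<k$), this is equivalent to $\dim X<k=k\cdot 1$.

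For the inductive step I would prove the two implications $\dim X<kn \iff \ddim{|\partial\Delta^k|} X<n$ and $\ddim{|\partial\Delta^k|}X<n \iff \dInd{|\partial\Delta^k|}X<n$ by induction on $n$, using the $n=1$ case as the base. The equivalence $\ddim{|\partial\Delta^k|} X<n \iff \dim X<kn$ for metric $X$ is most naturally obtained from the definition via form (b'): $\ddim{|\partial\Delta^k|}X\le n-1$ iff every map $f\colon F\to (S^{k-1})^{\ast n}$ from a closed $F\subset X$ extends over $X$; now $(S^{k-1})^{\ast n}$ is homotopy equivalent to $S^{kn-1}$ (the join of spheres: $S^{p}\ast S^{q}\simeq S^{p+q+1}$, iterated), so by the ANR extension theorem (West's theorem plus Theorem~\ref{ANR-eqiv-eq}, or directly since homotopy-equivalent ANR targets have the same extension property over metric spaces) this holds iff every map from $F$ into $S^{kn-1}$ extends over $X$, which by Hurewicz--Wallman type characterisation of covering dimension on metric spaces is exactly $\dim X\le kn-1$, i.e.\ $\dim X<kn$.

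It remains to link $\ddim{|\partial\Delta^k|}$ and $\dInd{|\partial\Delta^k|}$. The inequality $\ddim{L}X\le\dInd{L}X$ (whenever one side is finite) is already in Theorem~\ref{gen-relations}, giving $\dInd{|\partial\Delta^k|}X<n \Rightarrow \ddim{|\partial\Delta^k|}X<n$ once we know the common value is finite, which the equivalence with $\dim$ guarantees. For the reverse, assume $\ddim{|\partial\Delta^k|}X<n$, equivalently $\dim X<kn$; take any map $f\colon F\to S^{k-1}$ from a closed $F\subset X$ and an $S^{k-1}$-partition (equivalently $\partial\Delta^k$-partition) $P$ for $f$ in $X$. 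By the classical partition/subset theorem for covering dimension in metric spaces one can choose $P$ with $\dim P\le\dim X-1<k(n-1)$ — this is the standard fact that a separator can be taken one dimension lower, applied to the closed set $P$ which arises as a boundary; then by the induction hypothesis $\dInd{|\partial\Delta^k|}P<n-1$, so $\dInd{|\partial\Delta^k|}X\le n-1<n$. The main obstacle I anticipate is exactly this last point: verifying that the $S^{k-1}$-partition $P$ can be taken with $\dim P<k(n-1)$ rather than merely $\dim P\le\dim X-1$; one needs that lowering the covering dimension by one suffices to drop below the next multiple of $k$, which forces a more careful choice — realising $f$ (up to homotopy, via a CW structure on $S^{k-1}$) as built from a map into the $(k-2)$-skeleton together with one top cell, and choosing the partition to absorb only that top cell, so that the genuine obstruction to extending over $P$ is a map into $S^{k-1}$ whose relevant partition lives in dimension $<k(n-1)$. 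Making this reduction rigorous (essentially reproving, for this specific family of complexes, the inductive-to-covering comparison) is where the real work lies; everything else is bookkeeping with the quoted theorems.
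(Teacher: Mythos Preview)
Your treatment of the second equivalence, $\ddim{|\partial\Delta^k|}X<n \iff \dim X<kn$, via the join identity $(S^{k-1})^{\ast n}\cong S^{kn-1}$ and the definition~(b'), is correct and is a genuinely different route from the paper's. The paper instead quotes Fedorchuk's decomposition theorem (a metric space has $\ddim{L}\le n$ iff it is a union of $n{+}1$ subspaces of $\ddim{L}\le 0$), reduces $\ddim{|\partial\Delta^k|}X_i\le 0$ to $\dim X_i<k$, further decomposes each $X_i$ into $k$ zero-dimensional pieces, and reassembles to get $\dim X<kn$. Your join argument is shorter and avoids the decomposition machinery, at the cost of importing the sphere-join formula; the paper's argument is more self-contained within Fedorchuk's framework.

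Where you go astray is the first equivalence. You invoke only the hereditarily-normal clause of Theorem~\ref{gen-relations} (to get $\dInd{\partial\Delta^k}=\dInd{|\partial\Delta^k|}$) and then try to close the gap $\ddim{|\partial\Delta^k|}\Rightarrow\dInd{|\partial\Delta^k|}$ by an inductive choice of $S^{k-1}$-partitions with $\dim P<k(n-1)$. As you yourself note, lowering $\dim$ by one does not in general push $P$ below the next multiple of $k$, and your sketch of ``absorbing one top cell'' is not a proof. This whole detour is unnecessary: the \emph{last} clause of Theorem~\ref{gen-relations} already says that for metrisable $X$ with $\ddim{K}X$ finite, all four dimensions coincide; combined with the inequality $\ddim{K}\le\dInd{K}$ (third clause) this gives the first equivalence in one line, exactly as the paper does. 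Drop the inductive partition argument and cite that clause instead.
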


\begin{proof}
The former equivalence results from Theorem \ref{gen-relations}. The latter for $n=1$ is the well-known theorem on extending maps to
spheres (see \cite[Theorem 3.2.10]{eng-95}).

We shall apply this theorem by Fedorchuk \cite[Theorem 5.7 and Corollary 5.16]{fed-09}: {\em A~metric space $X$ has $\ddim{L} X\leq
n\in\mathbb{N}$ iff there are subspaces $X_0,\ldots,X_n$ of~$X$ such that $X=X_0\cup\ldots\cup X_n$ and $\ddim{L} X_i\leq 0$ for
$i=0,\ldots,n$.}

Let $n>1$ and $L=|\partial\Delta^k|$. Then $\ddim{|\partial\Delta^{k}|} X<n$ iff $X=X_0\cup\ldots\cup X_{n-1}$ and
$\ddim{|\partial\Delta^{k}|} X_i\leq 0$ for $i=0,\ldots,n-1$. These last inequalities are equivalent to $\dim X_i<k$, and in turn, to
the statement that $X_i=X_i^0\cup\ldots\cup X_i^{k-1}$ and $\dim X_i^j\leq 0$ for $j=0,\ldots,k-1$ (by \cite[Theorem 4.1.17]{eng-95}).
Thus, $\ddim{|\partial\Delta^{k}|} X<n$ iff $X$ is the union of at most $kn$ subspaces $X_i^j$ with $\dim X_i^j=0$, i.e.\ iff $\dim
X<kn$ (again by \cite[Theorem 4.1.17]{eng-95}).
\end{proof}

Suppose that $\mathcal{U}$ is an open $K$-tuple of a space $X$. We say that an element $x\in X$ is a {\em $K$-obstruction point for}
$\mathcal{U}$ provided that $\mathcal{U}$ has no $K$-neighbourhood~$\mathcal{V}$ with $x\in\bigcup\mathcal{V}$. We write
$\dobs{K}\mathcal{U}$ for the set of $K$-obstruction points for~$\mathcal{U}$. Clearly, $\dobs{K}\mathcal{U}$ does not intersect
$\bigcup\mathcal{U}$.

Let us note the following simple observation.

\begin{lemma}\label{obs}
Consider $K=\partial\Delta^k$. Then
$$\dobs{\partial\Delta^k}\mathcal{U}={\bigcap}_{\,0\leq i\leq k}\cl\left({\bigcap}_{\,0\leq j\leq k,\,j\neq i} U_j\right)$$
for every open $\partial\Delta^k$-tuple $\mathcal{U}=(U_0,\ldots,U_k)$.
\end{lemma}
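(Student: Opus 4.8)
The plan is to prove the two inclusions separately, unwinding the definition of $\partial\Delta^k$-obstruction point together with the combinatorial description of $K$-tuples for $K=\partial\Delta^k$. Recall that a $(k+1)$-tuple $\mathcal{A}=(A_0,\dots,A_k)$ is a $\partial\Delta^k$-tuple precisely when $\bigcap_{i\in I}A_i=\emptyset$ whenever $I=\{0,\dots,k\}$, since the simplices of $\partial\Delta^k$ are exactly the proper subsets of $\{e_0,\dots,e_k\}$; equivalently, the ``bad'' intersection is the single full intersection $\bigcap_{i=0}^k A_i$. I would state this reformulation explicitly at the start, as it is the only property of $\partial\Delta^k$ used.

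For the inclusion $\subseteq$, suppose $x\notin\bigcap_{0\le i\le k}\cl\bigl(\bigcap_{j\neq i}U_j\bigr)$, so there is an index $i_0$ with $x\notin\cl\bigl(\bigcap_{j\neq i_0}U_j\bigr)$. Then $W=X\setminus\cl\bigl(\bigcap_{j\neq i_0}U_j\bigr)$ is an open neighbourhood of $x$. Define $\mathcal{V}$ by $V_{i_0}=U_{i_0}\cup W$ and $V_j=U_j$ for $j\neq i_0$. I would check that $\mathcal{V}$ is still an open $\partial\Delta^k$-tuple: the full intersection $\bigcap_{i=0}^k V_i$ equals $\bigl(\bigcap_{j\neq i_0}U_j\bigr)\cap(U_{i_0}\cup W)$; the part with $U_{i_0}$ is empty because $\mathcal{U}$ is a $\partial\Delta^k$-tuple, and the part with $W$ is empty because $W$ misses $\bigcap_{j\neq i_0}U_j$. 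Since $B_i\subset U_i\subset V_i$, $\mathcal{V}$ is a $\partial\Delta^k$-neighbourhood of $\mathcal{U}$ with $x\in\bigcup\mathcal{V}$, so $x\notin\dobs{\partial\Delta^k}\mathcal{U}$.

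For the reverse inclusion $\supseteq$, suppose $x\in\bigcap_{0\le i\le k}\cl\bigl(\bigcap_{j\neq i}U_j\bigr)$ and let $\mathcal{V}$ be any open $\partial\Delta^k$-neighbourhood of $\mathcal{U}$, i.e.\ $U_i\subset V_i$ with $\bigcap_{i=0}^k V_i=\emptyset$; I must show $x\notin\bigcup\mathcal{V}$. If $x\in V_{i_0}$ for some $i_0$, then $V_{i_0}$ is an open neighbourhood of $x$, and since $x\in\cl\bigl(\bigcap_{j\neq i_0}U_j\bigr)$ there is a point $y\in V_{i_0}\cap\bigcap_{j\neq i_0}U_j\subseteq V_{i_0}\cap\bigcap_{j\neq i_0}V_j=\bigcap_{i=0}^k V_i$, contradicting $\bigcap_{i=0}^k V_i=\emptyset$. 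Hence $x\notin\bigcup\mathcal{V}$ for every such $\mathcal{V}$, which is exactly the statement that $x\in\dobs{\partial\Delta^k}\mathcal{U}$.

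I do not anticipate a serious obstacle here; the lemma is genuinely routine once the combinatorics of $\partial\Delta^k$ is made explicit, and the only mild care needed is in verifying that the modified tuple $\mathcal{V}$ in the first inclusion remains a $\partial\Delta^k$-tuple (the full $(k+1)$-fold intersection is the only one that can fail, and both summands vanish for the reasons above). The argument uses only normality-free, purely set-theoretic manipulations with closures and does not need the ANR or polyhedron machinery.
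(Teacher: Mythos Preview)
Your argument is correct and essentially identical to the paper's proof: both directions use the same constructions (enlarging $U_{i_0}$ by an open set missing $\bigcap_{j\neq i_0}U_j$ for one inclusion, and intersecting $V_{i_0}$ with the closure for the other), with the only difference being that you spell out the verification that the full intersection of the modified tuple vanishes. The stray ``$B_i\subset U_i\subset V_i$'' should read ``$U_i\subset V_i$'', but that is a harmless slip.
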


\begin{proof}
Assume that $x\not\in\dobs{\partial\Delta^k}\mathcal{U}$, i.e.\ $x\in\bigcup\mathcal{V}$ for a certain
$\partial\Delta^k$-neighbourhood $\mathcal{V}=(V_0,\ldots,V_k)$ of~$\mathcal{U}$. If $x\in V_i$, then $x\not\in\cl({\bigcap}_{\,0\leq
j\leq k,\,j\neq i} U_j)\subset\cl({\bigcap}_{\,0\leq j\leq k,\,j\neq i} V_j)$ as $\mathcal{V}$~is a $\partial\Delta^k$-tuple. Thus,
$x$ does not belong to the intersection of closures.

Assume there is an $i$ such that $x\not\in\cl({\bigcap}_{\,0\leq j\leq k,\,j\neq i} U_j)$. Then there is a neighbourhood $W\ni x$
disjoint from ${\bigcap}_{\,0\leq j\leq k,\,j\neq i} U_j$. The union $V_i=U_i\cup W$ and the sets $V_j=U_j$, $j\neq i$, form a
$\partial\Delta^k$-neighbourhood $\mathcal{V}$ of~$\mathcal{U}$, and hence, $x\not\in\dobs{\partial\Delta^k}\mathcal{U}$.
\end{proof}

Considering the dimension $\dInd{K}$, we distinguish two ways, in which a space~$X$ may be $\alpha$-di\-men\-sion\-al. We define the
{\em dimensional strength degree} $\dstr{K} X\!\in\!\{0,1\}$ as follows. Let $0<\alpha=\dInd{K} X<\infty$. We put $\dstr{K} X=0$
($X$~is {\em weakly}\/ $\alpha$-dimensional) when every closed $K$-tuple of~$X$ has a $K$-neigh\-bour\-hood $\mathcal{U}$ with
$\dobs{K}\mathcal{U}=\emptyset$ and $\dInd{K}(X\setminus\bigcup\mathcal{U})<\alpha$. Otherwise, we put $\dstr{K} X=1$ (i.e.\ $X$~is
{\em strongly}\/ $\alpha$-dimensional when $0<\alpha=\dInd{K} X<\infty$ and there is a closed $K$-tuple whose every
$K$-neigh\-bour\-hood~$\mathcal{U}$ with $\dInd{K}(X\setminus\bigcup\mathcal{U})<\alpha$ has $\dobs{K}\mathcal{U}\neq\emptyset$). By
abuse of notation, we write $\dstr{K}X=0$ when $\alpha$ is $-1$, $0$, or $\infty$.

In the next section we prove that the above distinction is material at least for some $K$'s: if $1\leq n\in\mathbb{N}$ and $2\leq
k\in\mathbb{N}$, then---for instance---the following cubes have $\dInd{\partial\Delta^k} [0,1]^{kn}=\dInd{\partial\Delta^k}
[0,1]^{k(n+1)-1}=n$, $\dstr{\partial\Delta^k} [0,1]^{kn}=0$, and $\dstr{\partial\Delta^k}[0,1]^{k(n+1)-1}=1$ (cf.\ Theorem \ref{equiv}
and Propositions \ref{big-space1}--\ref{big-space2}). On the other hand, in the case when $k=1$ and $K=\{0,1\}$, {\em every normal
space $X$ with\/ $\trInd X$ being a successor ordinal has\/ $\dstr{\{0,1\}} X=1$.} Indeed, let $\alpha=\trInd X>0$, and suppose on the
contrary that $\dstr{\{0,1\}} X=0$. Take arbitrary disjoint closed sets $F_0,F_1\subset X$. Then, the $\{0,1\}$-tuple $(F_0,F_1)$ has
a $\{0,1\}$-neighbourhood $(U_0,U_1)$ with $P=X\setminus (U_0\cup U_1)$, $\trInd P\leq\alpha-1$, and $\dobs{\{0,1\}}(U_0,U_1)=\cl
U_0\cap\cl U_1=\emptyset$. Hence, there exists a partition~$Q$ between $\cl U_0$ and $\cl U_1$ with $\trInd Q<\alpha-1$, and we have
shown that $\trInd X\leq\alpha-1$. A contradiction. Therefore $\dstr{\{0,1\}} X=1$. Finally, the Smirnov compactum $S_{\omega_0}$
(i.e.\ the one-point compactification of the topological sum $\bigoplus_{i=1}^\infty [0,1]^i$) has $\trInd S_{\omega_0}=\omega_0$ and
$\dstr{\{0,1\}} S_{\omega_0}=0$.

Using the definition of $\dstr{K}$, one easily proves the following.

\begin{prop}\label{subspace}
Suppose that $A$ is a closed subspace of a normal space~$X$. If $\dInd{K}A=\dInd{K} X$ and $\dstr{K} X=0$, then $\dstr{K} A=0$.\qed
\end{prop}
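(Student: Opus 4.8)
The plan is to argue by contradiction, exploiting the fact that $\dstr{K}$ is defined by a property that must hold for \emph{every} closed $K$-tuple. Suppose $A$ is closed in $X$, $\alpha:=\dInd{K}A=\dInd{K}X$, and $\dstr{K}X=0$, but $\dstr{K}A=1$. If $\alpha\in\{-1,0,\infty\}$ there is nothing to prove, so assume $0<\alpha<\infty$. From $\dstr{K}A=1$ we obtain a witnessing closed $K$-tuple $\mathcal{F}=(F_0,\ldots,F_k)$ of $A$: \emph{every} $K$-neighbourhood $\mathcal{V}$ of $\mathcal{F}$ in $A$ with $\dInd{K}(A\setminus\bigcup\mathcal{V})<\alpha$ has $\dobs{K}\mathcal{V}\neq\emptyset$. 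Since $A$ is closed in the normal space $X$, each $F_i$ is closed in $X$ as well, so $\mathcal{F}$ is also a closed $K$-tuple \emph{of $X$}.

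Now I would invoke $\dstr{K}X=0$ applied to this $K$-tuple $\mathcal{F}$ (viewed in $X$): there is a $K$-neighbourhood $\mathcal{U}=(U_0,\ldots,U_k)$ of $\mathcal{F}$ in $X$ with $\dobs{K}\mathcal{U}=\emptyset$ and $\dInd{K}(X\setminus\bigcup\mathcal{U})<\alpha$. The key step is then to restrict $\mathcal{U}$ to $A$. Put $\mathcal{V}=\mathcal{U}|A=(U_0\cap A,\ldots,U_k\cap A)$, which is an open $K$-tuple of $A$ containing $\mathcal{F}$, i.e.\ a $K$-neighbourhood of $\mathcal{F}$ in $A$. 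Because $A$ is closed in $X$, the corresponding partition satisfies $A\setminus\bigcup\mathcal{V}=A\cap(X\setminus\bigcup\mathcal{U})$, which is a closed subspace of $X\setminus\bigcup\mathcal{U}$; by monotonicity of $\dInd{K}$ on closed subspaces (the $\dInd{K}$ analogue of the subspace theorem, which follows directly from Definition \ref{def3} by transfinite induction since restricting a closed $K$-tuple to a closed subspace again gives a closed $K$-tuple and partitions restrict to partitions) we get $\dInd{K}(A\setminus\bigcup\mathcal{V})\leq\dInd{K}(X\setminus\bigcup\mathcal{U})<\alpha$.

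It remains to check $\dobs{K}\mathcal{V}=\emptyset$, for then $\mathcal{V}$ contradicts the choice of $\mathcal{F}$ as a witness for $\dstr{K}A=1$. This is where the definition of obstruction point does the work: if $x\in A\setminus\bigcup\mathcal{V}$, then $x\in X\setminus\bigcup\mathcal{U}$, and since $\dobs{K}\mathcal{U}=\emptyset$, the tuple $\mathcal{U}$ has a $K$-neighbourhood $\mathcal{W}$ in $X$ with $x\in\bigcup\mathcal{W}$; restricting, $\mathcal{W}|A$ is a $K$-neighbourhood of $\mathcal{V}$ in $A$ with $x\in\bigcup(\mathcal{W}|A)$, so $x\notin\dobs{K}\mathcal{V}$. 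Hence $\dobs{K}\mathcal{V}=\emptyset$, the desired contradiction, so $\dstr{K}A=0$.

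The argument is essentially routine once the restriction mechanics are in place; the only point that needs a little care — and the one I would flag as the main (minor) obstacle — is ensuring that the closedness of $A$ in $X$ is genuinely used at each transfer step: it is what makes the restricted partition $A\setminus\bigcup\mathcal{V}$ closed in $X\setminus\bigcup\mathcal{U}$ (so that monotonicity of $\dInd{K}$ applies), and it is implicitly what lets the witnessing tuple and all the neighbourhood data pass back and forth between $A$ and $X$ without loss. If one only assumed $A$ arbitrary, the partition inequality could fail, so the hypothesis that $A$ is closed is not cosmetic. No special properties of $K$ are needed, so the proposition holds for every simplicial complex $K$.
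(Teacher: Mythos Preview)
Your proof is correct and is exactly the routine verification the paper has in mind: the paper gives no proof at all, merely noting that the proposition follows ``using the definition of $\dstr{K}$'' and placing a \qed, so your argument is a faithful fleshing-out of that one-line remark. The restriction steps (closed $K$-tuple passes from $A$ to $X$, the $K$-neighbourhood and its obstruction-free property restrict back to $A$, and the partition inequality follows from closed-subspace monotonicity of $\dInd{K}$) are precisely the content of ``use the definition''.
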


\section{General lemmas}\label{general}

In this section we collect miscellaneous properties of Fedorchuk's dimensions, prove a combinatorial analogue (Corollary
\ref{dim-partition}) of Yu.\ T.\ Lisitsa's theorem \cite{lis-79} on partial extensions of maps into spheres (Theorem \ref{lis}
herein), investigate the $\dstr{\partial\Delta^k}$ of metric spaces, and prove the theorem on the dimensions of a product with a
compact discontinuum.

\begin{prop}[cf.\ Fedorchuk {\cite[Theorem 2.5]{fed-10a}}, Charalambous and Krzempek {\cite[Corollary 2]{chk-12}}] \label{cube}
Let $n\geq 1$ be a natural number. If $n=1$ or the join $L\ast L$ is non-contractible, then there is a natural number $m$ such that
$\ddim{L}[0,1]^m=n$.
\end{prop}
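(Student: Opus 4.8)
\textbf{Proof proposal for Proposition \ref{cube}.}

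The plan is to reduce everything to the case $L=|K|$ via Theorems \ref{west} and \ref{ANR-eqiv-eq}, and then to produce a cube of the right dimension directly. First I would invoke Theorem \ref{west} to replace $L$ by a homotopy equivalent compact polyhedron, and then Theorem \ref{ANR-eqiv-eq} to conclude $\ddim{L}=\ddim{|K|}$ on all normal spaces, where $|K|$ is this polyhedron; by Theorem \ref{gen-relations} we also have $\ddim{|K|} Y=\ddim{K} Y$ for every normal $Y$, so it suffices to find $m$ with $\ddim{K}[0,1]^m=n$. Note that non-contractibility of $L$ is preserved under homotopy equivalence, and since $[0,1]^m$ is hereditarily normal, all the subtleties distinguishing $\ddim{K}$ from $\ddim{|K|}$ are irrelevant here.

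The core of the argument is the known monotone behaviour of $\ddim{K}$ on cubes together with a suitable ``large cube'' bound. For the upper estimate, $[0,1]^m$ is metrisable with $\dim[0,1]^m=m<\infty$, so by the last assertion of Theorem \ref{gen-relations} all four dimensions coincide on it; in particular $\ddim{K}[0,1]^m$ equals $\trInd[0,1]^m=m$ when $K=\{0,1\}$, but for a general complex $K$ I instead want the bound $\ddim{K}[0,1]^m\le m$, which follows because $\ddim{K} Y\le\dim Y$ is a standard consequence of Definition \ref{def1} (every closed $K$-tuple admits $K$-partitions, and one may take them to lie inside ordinary partitions, so $\dim$-many of them already intersect in the empty set). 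Hence for every $m$ we have $\ddim{K}[0,1]^m\le m$, and in particular $\ddim{K}[0,1]^m$ is always finite. For the lower estimate one uses that $[0,1]^{m}$ embeds as a retract (indeed as a face) in $[0,1]^{m'}$ for $m\le m'$, and $\ddim{K}$ is monotone with respect to closed subspaces of normal spaces (immediate from Definition \ref{def1}, restricting $K$-tuples); combined with $\ddim{K}[0,1]^0=\ddim{K}(\text{point})=0$ and, crucially, the fact that $\ddim{K}[0,1]^m\to\infty$ as $m\to\infty$, monotonicity forces $\ddim{K}[0,1]^m$ to take every natural value. The latter divergence is exactly where the two hypotheses enter: when $n=1$ it is the classical statement that a non-contractible polyhedron $|K|$ admits a map from some sphere boundary that does not extend, so $\ddim{K}[0,1]^m\ge 1$ for large $m$; when $L\ast L=|K|\ast|K|$ is non-contractible, Fedorchuk's results (as recorded in the cited \cite[Theorem 2.5]{fed-10a}, \cite[Corollary 2]{chk-12}) give $\ddim{K}[0,1]^m\to\infty$.

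The main obstacle is the unboundedness claim $\sup_m\ddim{K}[0,1]^m=\infty$ under the stated hypotheses; once that is in hand, the proposition is just monotonicity plus the discrete intermediate value principle. I would handle it by quoting the cited theorem of Fedorchuk and the corollary from \cite{chk-12} rather than reproving it: those results already establish that, when $L\ast L$ is non-contractible, the dimension $\ddim{L}$ is unbounded on cubes, and for $n=1$ the needed statement is the elementary fact that for a non-contractible compact ANR $L$ there is an $m$ with $\ddim{L}[0,1]^m\ge 1$ (equivalently, a map $f\colon F\to L$ from a closed $F\subset[0,1]^m$ with no extension over $[0,1]^m$ — take $F=S^{d}\subset[0,1]^{d+1}$ and a non-nullhomotopic $f\colon S^d\to L$, which exists for some $d$ precisely because $L$ is non-contractible). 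Finally, choosing $m$ minimal with $\ddim{K}[0,1]^m\ge n$, monotonicity and $\ddim{K}[0,1]^{m-1}\le n-1$ together with $\ddim{K}[0,1]^m\le m$ give $\ddim{K}[0,1]^m=n$, and transporting back through Theorems \ref{ANR-eqiv-eq} and \ref{gen-relations} yields $\ddim{L}[0,1]^m=n$, as required.
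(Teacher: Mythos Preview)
Your argument has a genuine gap in the final step. From ``$m$ minimal with $\ddim{K}[0,1]^m\ge n$'' you correctly get $\ddim{K}[0,1]^{m-1}\le n-1$, but to conclude $\ddim{K}[0,1]^m=n$ you must bound $\ddim{K}[0,1]^m$ above by $n$, and the inequality $\ddim{K}[0,1]^m\le m$ that you invoke does not do this (nothing forces $m=n$; for instance if $K=\partial\Delta^k$ with $k\ge 2$ then Theorem~\ref{equiv} shows $m$ is roughly $kn$). What your ``discrete intermediate value principle'' actually requires is the one-step estimate
\[
\ddim{K}[0,1]^m\ \le\ \ddim{K}[0,1]^{m-1}+1,
\]
i.e.\ a product inequality of the form $\ddim{K}(X\times[0,1])\le\ddim{K}X+1$. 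You neither state this nor justify it, and it is not among the tools assembled in the paper. The same defect bites in your $n=1$ case: finding a non-nullhomotopic $S^d\to L$ gives $\ddim{L}[0,1]^{d+1}\ge 1$, but nothing you have said rules out $\ddim{L}[0,1]^{d+1}\ge 2$.

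The paper sidesteps this issue completely. When $L\ast L$ is non-contractible it simply quotes Fedorchuk's \cite[Theorem~2.5]{fed-10a}, which already yields a cube with $\ddim{L}[0,1]^m=n$ exactly (not merely $\ge n$ or ``unbounded in $m$''), so no intermediate-value step is needed. In the remaining case $n=1$ with $L\ast L$ contractible, the decisive extra ingredient is Fedorchuk's \cite[Proposition~2.3]{fed-10a}: contractibility of $L\ast L$ forces $\ddim{L}X\le 1$ for \emph{every} normal space~$X$. Hence one only has to exhibit a cube with $\ddim{L}\ge 1$; the paper embeds $|K|\subset[0,1]^{m-1}$ and uses non-contractibility of $|K|$ to produce a map $|K|\times\{0,1\}\to|K|$ with no extension over $|K|\times[0,1]$, giving $0<\ddim{|K|}(|K|\times[0,1])\le\ddim{|K|}[0,1]^m\le 1$. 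To repair your argument along its own lines you would need to supply and prove the product-with-an-interval inequality above; the paper's route is shorter because it leans on the universal upper bound $\ddim{L}\le 1$ in the only case not already covered verbatim by Fedorchuk.
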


\begin{proof}
If $L\ast L$ is non-contractible, we are done by Fedorchuk's \cite[Theorem 2.5]{fed-10a}. Assume that $n=1$ and $L\ast L$ is
contractible. Then every normal space $X$ has $\ddim{L} X\leq 1$ by Fedorchuk's \cite[Proposition 2.3]{fed-10a}. In view of Theorems
\ref{west} and \ref{ANR-eqiv-eq}, it is sufficient to consider $L=|K|\subset [0,1]^{m-1}$. As $|K|$ is non-contractible, a certain map
from $|K|\times\{0,1\}$ to $|K|$ does not have an extension from $|K|\times [0,1]$ to $|K|$. Therefore, $0<\ddim{|K|} (|K|\times
[0,1])\leq\ddim{|K|}[0,1]^m\leq 1=n$.
\end{proof}

The following lemma is an $\dInd{L}$ analogue of \cite[Proposition 1]{chk-12}.

\begin{lemma}\label{zero-Dowker}
Let $X$ be a normal space, and $F\subset X$ be closed. If $\dInd{L} F=0$ and $\dInd{L} E\leq\alpha$ for each closed subset $E\subset
X$ disjoint from~$F$, then $\dInd{L} X\leq\alpha$.
\end{lemma}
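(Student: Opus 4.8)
\textbf{Proof proposal for Lemma \ref{zero-Dowker}.} The plan is to imitate the classical Dowker-type argument for $\Ind$ (and the proof of \cite[Proposition 1]{chk-12}), adapted to the $L$-partition language of Definition~\ref{def4}. I will argue by transfinite induction on $\alpha$. For the inductive step, let $f\colon G\to L$ be a map defined on a closed set $G\subset X$; I must produce an $L$-partition $P$ in $X$ for $f$ with $\dInd{L} P<\alpha$. The strategy is to first separate the behaviour near $F$ from the behaviour away from $F$.

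First I would handle the part of $X$ near $F$. Since $\dInd{L} F=0$, the restriction $f|_{G\cap F}$ has an $L$-partition $P_0$ in $F$ with $\dInd{L} P_0=-1$, i.e. $P_0=\emptyset$; equivalently, $f|_{G\cap F}$ extends over all of $F$. Combining this extension with $f$ itself on $G$ (they agree on the closed set $G\cap F$, and $F\cup G$ is closed, so by the pasting lemma we get a map $h\colon F\cup G\to L$), and then using that $L$ is an ANR over the normal space $X$, I obtain an open set $W\supset F\cup G$ together with an extension $\tilde h\colon W\to L$ of $h$. By normality choose an open $V$ with $F\cup G\subset V\subset\cl V\subset W$. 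Now I would shrink once more: pick open $V'$ with $F\subset V'\subset\cl V'\subset V$. The set $E:=\cl V'\setminus V$... wait — I instead want a closed set missing $F$ on which to apply the hypothesis. The correct choice is to take a partition-type argument: set $E=X\setminus V'$, a closed set, and note $E$ need not miss $F$, so instead I take $F\subset V'\subset\cl V'\subset V$ and consider the closed set $E:=X\setminus V'$; this still may meet $F$. So rather I separate: choose open $V'$ with $F\subset V'\subset\cl V'\subset V\subset\cl V\subset W$ and put $E:=X\setminus V'$. Hmm, $E$ meets $\bd V'$ which can touch $F$... Actually the clean route is: the hypothesis is applied not to $X\setminus(\text{nbhd of }F)$ but we only need a closed set \emph{disjoint from} $F$, so I take $E:=\cl(V\setminus V')$? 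That is awkward. Let me instead follow the genuinely standard line below.

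The standard line: put $E:=X\setminus V$, which is closed; it need not be disjoint from $F$ unless $F\subset V$, which it is, but $E=X\setminus V$ can still meet $\cl V$... no: $E=X\setminus V$ is disjoint from $V\supset F$, hence \emph{disjoint from $F$}. Good. By hypothesis $\dInd{L} E\le\alpha$. Now restrict $\tilde h$ to the closed set $\cl V'\cap E$ — but $E$ is disjoint from $V'$, so $\cl V'\cap E\subset\bd V'$, and on this closed subset of $E$ I apply Definition~\ref{def4}(b') inside the subspace $E$ to the map $\tilde h|_{\bd V'\cap E}$ (legitimate since $\bd V'\cap E$ is closed in $E$): I get an $L$-partition $P_1$ in $E$ with $\dInd{L} P_1<\alpha$. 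Let $U_1$ be the corresponding $L$-neighbourhood in $E$, with an extension $g_1\colon U_1\to L$ of $\tilde h|_{\bd V'\cap E}$. Finally I glue: on $\cl V'\cup U_1$ (a subset of $X$) I define $g$ to equal $\tilde h$ on $\cl V'$ and $g_1$ on $U_1$; these agree on the overlap $\bd V'\cap E\supset$ (the relevant piece), so $g$ is a well-defined map on a set that is a neighbourhood of $\cl V'\cup U_1$ relative to... — one last ANR-extension of $g$ over an open set $O\supset\cl V'\cup U_1$ in $X$ gives the $L$-neighbourhood $O$ of $f$, and I claim $P:=X\setminus O$ is contained in $E\setminus U_1=P_1$, hence $\dInd{L} P\le\dInd{L} P_1<\alpha$ by monotonicity of $\dInd{L}$ on closed subspaces. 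That closes the induction, with the base $\alpha=0$ being exactly the case $E=F$-complement having $\dInd{L}=0$ handled verbatim.

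The main obstacle I anticipate is entirely bookkeeping: arranging the nested open sets $F\subset V'\subset\cl V'\subset V\subset\cl V\subset W$ so that (i) $E=X\setminus V$ is closed and disjoint from $F$, (ii) the maps $\tilde h$ (coming from the ANR extension near $F\cup G$) and $g_1$ (coming from applying (b') inside $E$) are defined on, and agree on, a common closed set, so the pasting lemma applies, and (iii) the final partition $P$ genuinely lands inside $P_1$. Point (ii) is the delicate one — one must be sure the closed set on which we invoke (b') inside $E$, namely $\bd V'\cap E$ (or a slightly larger closed subset of $E$ on which $\tilde h$ is defined), is really closed in $E$ and that the resulting partition of $E$ can be re-exported to a partition of $X$; this uses normality of $X$ and the fact that a closed subspace of a normal space is normal, together with the observation that an $L$-partition in the closed subspace $E$ for a map, once extended across the open collar $V\setminus\cl V'$ via $\tilde h$, yields an $L$-partition in $X$. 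No new machinery beyond Definition~\ref{def4}, normality, and the ANR extension property is needed, so this should go through routinely once the collars are fixed.
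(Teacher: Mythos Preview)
Your approach is exactly the paper's, but the execution tangles itself; two superfluous additions cause the trouble.

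First, the transfinite induction on $\alpha$ is unnecessary: you never actually invoke an inductive hypothesis, and indeed the paper's argument is direct.

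Second --- and this is the real slip --- the extra set $V'$ with $F\subset V'\subset\cl V'\subset V$ is the source of your confusion. With $E=X\setminus V$ as you (correctly) choose, the closed set $\cl V'\cap E$ on which you propose to invoke Definition~\ref{def4}(b') is \emph{empty}, since $\cl V'\subset V$. So the argument as written applies the hypothesis to the empty map and yields nothing.

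The fix is simply to drop $V'$. You already have $F\cup G\subset V\subset\cl V\subset W$ and the extension $\tilde h\colon W\to L$. Take $E=X\setminus V$; it is closed and disjoint from~$F$, so $\dInd{L}E\le\alpha$. The relevant closed subset of $E$ is $\bd V$ (which lies in $\cl V\subset W$, so $\tilde h$ is defined there). Definition~\ref{def4}(b') gives an $L$-partition $P$ in $E$ for $\tilde h|\bd V$ with $\dInd{L}P<\alpha$, and automatically $P\subset E\setminus\bd V=X\setminus\cl V$. The extension $g''\colon E\setminus P\to L$ of $\tilde h|\bd V$ agrees with $\tilde h$ on the overlap $\bd V$, so $(\tilde h|\cl V)\cup g''$ is a well-defined continuous map on $\cl V\cup(E\setminus P)=X\setminus P$ extending~$f$. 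No ``one last ANR-extension'' is needed, and $P$ itself is already the desired $L$-partition in~$X$. This is verbatim the paper's proof.
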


\begin{proof}
Take any map $g\colon G\to L$, where $G\subset X$ is closed. Since $\dInd{L} F=0$, we infer that $g$~has an extension from $G\cup F$
to $L$. As $L$ is an ANR, we now obtain a neighbourhood $U$ of $G\cup F$ with an extension $g'\colon U\to L$ of~$g$. Let $V\subset X$
be an open set with $G\cup F\subset V\subset \cl V\subset U$. Since $\dInd{L} (X\setminus V)\leq\alpha$, there is an $L$-partition $P$
in $X\setminus V$ for the restriction $g'|\bd V$, where $\dInd{L} P<\alpha$. This means that $P\subset X\setminus\cl V$, and $g'|\bd
V$ has an extension $g''\colon X\setminus (V\cup P)\to L$. Finally, $(g'|\cl V)\cup g''\colon X\setminus P\to L$ extends $g$, and we
have shown that $\dInd{L} X\leq\alpha$.
\end{proof}

Recall that any $x\in |K|$ can be uniquely written in the form $x=\sum_{i=0}^k x_i e_i$, where the {\em barycentric coordinates\/
$x_0,\ldots,x_k$} are non-negative real numbers with $\sum_{i=0}^k x_i =1$. Put $K_i=\{x\in |K|\colon x_i\geq\frac{1}{k+1}\}$, and
note that $\mathcal{K}=(K_0,\ldots,K_k)$ is a closed $K$-cover of $|K|$.

\begin{lemma}[{\cite[Lemma 7]{chk-12}}]\label{extend}
Suppose that $f\colon  F \to |K|$ is a map from a closed subset $F$ of a normal space~$X$. If the $K$-tuple $f^{-1}(\mathcal{K})$ has
a $K$-neigh\-bour\-hood that covers $X$, then $f$ has an extension from $X$ to~$|K|$.\qed
\end{lemma}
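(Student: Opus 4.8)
The plan is to reduce the extension problem to the homotopy extension property of the ANR $|K|$, by manufacturing from the covering hypothesis a partition of unity that simultaneously yields a map $X\to|K|$ and, along $F$, stays ``close to~$f$'' in the combinatorial sense that makes a straight-line homotopy to~$f$ stay inside $|K|$.

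First I would fix the given $K$-neighbourhood $\mathcal{U}=(U_0,\dots,U_k)$ of $f^{-1}(\mathcal{K})$ with $\bigcup\mathcal{U}=X$, so that $f^{-1}(K_i)\subset U_i$ and, whenever $\bigcap_{i\in I}U_i\neq\emptyset$, the set $\{e_i:i\in I\}$ spans a simplex of~$K$. The crucial refinement is this: for every $x\in F$ some barycentric coordinate satisfies $f(x)_i\geq\frac1{k+1}$, hence $x\in f^{-1}(K_i)\subset U_i$; therefore the sets $W_i=U_i\cap\{x\in F:f(x)_i>0\}$, open in~$F$, still cover~$F$. Put $V_i=U_i\setminus(F\setminus W_i)$, an open subset of~$X$ with $V_i\subset U_i$ and $V_i\cap F=W_i$ (one checks at once that $(V_i)$ covers~$X$). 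Since $X$ is normal and $(V_i)$ is a finite open cover, choose a subordinate partition of unity $(\nu_i)_{i=0}^k$ and set $g(x)=\sum_{i=0}^k\nu_i(x)\,e_i$. For each~$x$ the index set $\{i:\nu_i(x)>0\}$ lies in $\{i:x\in V_i\}\subset\{i:x\in U_i\}$, and the latter spans a simplex of~$K$ because $x$ belongs to the intersection of the corresponding $U_i$'s; hence $g(x)$ is a convex combination of vertices of a simplex of~$K$, so $g$ is a continuous map $X\to|K|$.

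Next I would verify that $g|_F$ is homotopic to $f$ \emph{inside}~$|K|$. Fix $x\in F$: then $\{i:\nu_i(x)>0\}\subset\{i:x\in W_i\}\subset\{i:f(x)_i>0\}$, so both $g(x)$ and $f(x)$ lie in the carrier simplex of~$f(x)$, a simplex of~$K$. As simplices are convex, the straight-line homotopy $H(x,t)=(1-t)g(x)+t\,f(x)$ takes values in~$|K|$, and it joins $g|_F$ to~$f$. Finally, $|K|$ is a compact polyhedron, hence an ANR, and finite polyhedra are absolute neighbourhood extensors for the class of normal spaces, so the pair $(X,F)$ has the homotopy extension property with respect to~$|K|$; gluing $g$ on $X\times\{0\}$ to $H$ on $F\times[0,1]$ gives a map on the closed set $(X\times\{0\})\cup(F\times[0,1])$, which extends to $\bar H\colon X\times[0,1]\to|K|$, and $\bar H(\,\cdot\,,1)$ is an extension of~$f$ over~$X$.

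The step I expect to be the main obstacle is not the homotopy extension itself but the preceding construction: the auxiliary cover $(V_i)$ must be chosen so delicately that the canonical map $g$ is at once well-defined into $|K|$ (which needs $\{i:x\in V_i\}$ to span a simplex, forced by the $K$-tuple property of~$\mathcal{U}$) and, over~$F$, close enough to~$f$ that $H$ never leaves~$|K|$ (which needs $\nu_i(x)>0\Rightarrow f(x)_i>0$ on~$F$, forced by the definition of~$W_i$ together with $f^{-1}(K_i)\subset U_i$). Reconciling these two demands is exactly what passing from $U_i$ to $W_i$ and then to $V_i$ accomplishes; the small barycentric coordinates of~$f$ are precisely what is ``invisible'' to the cover $\mathcal U$, and the refinement is what lets us homotope them away without losing the covering property.
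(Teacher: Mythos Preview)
The paper does not prove this lemma; it is quoted verbatim from \cite[Lemma~7]{chk-12} with a \qed, so there is no ``paper's proof'' to compare against.  Your argument is therefore being judged on its own merits, and it is essentially sound: the passage from $\mathcal{U}$ to the refined cover $(V_i)$ with $V_i\cap F=W_i=\{x\in F:f(x)_i>0\}\cap U_i$ is exactly the right idea, and it guarantees both that the canonical map $g(x)=\sum_i\nu_i(x)e_i$ lands in $|K|$ and that $g|_F$ and $f$ share carriers pointwise, so the straight-line homotopy stays in $|K|$.

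There is one genuine wrinkle in the last step.  You invoke the homotopy extension property for the pair $(X,F)$ with target $|K|$ by extending a map from $(X\times\{0\})\cup(F\times[0,1])$ to $X\times[0,1]$.  That extension uses the ANE property of $|K|$ for normal spaces, but it needs $X\times[0,1]$ to be normal, which for a merely normal $X$ is equivalent to countable paracompactness (Dowker's theorem).  Since the lemma is stated for arbitrary normal $X$, this is a gap.

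The fix is painless and actually shortens the proof.  Skip the homotopy altogether and build the extension directly: for each $i$, the function that equals $f_i$ on $F$ and $0$ on $X\setminus V_i$ is well defined and continuous on the closed set $F\cup(X\setminus V_i)$ (because $f_i$ vanishes on $F\setminus W_i=F\setminus V_i$); extend it by Tietze to $\mu_i\colon X\to[0,1]$.  Take a Urysohn function $\chi\colon X\to[0,1]$ with $\chi\equiv 1$ on $F$ and $\chi\equiv 0$ off the open set $\{\sum_j\mu_j>\tfrac12\}$, and put $\sigma_i=\chi\mu_i+(1-\chi)\nu_i$.  Then each $\sigma_i$ has support in $V_i$, $\sum_i\sigma_i>0$ everywhere, and $\sigma_i=f_i$ on $F$; hence $x\mapsto\sum_i\bigl(\sigma_i(x)/\sum_j\sigma_j(x)\bigr)e_i$ is a map $X\to|K|$ extending $f$.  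This is, in effect, your straight-line homotopy with the time parameter replaced by the Urysohn function $\chi$, so no product space is needed.
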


\begin{theo}[Lisitsa \cite{lis-79}; see also {\cite[Problem 1.9.D]{eng-95}}]\label{lis}
Let $k\geq 1$, $m\geq -1$ be integers, and $X$ a normal space. If each map $f\colon F\to S^{k-1}$ from any closed subset $F$ of~$X$
has an extension from $X\setminus P$ to $S^{k-1}$, where $P\subset X$ is closed, does not meet $F$, and\/ $\dim P\leq m$, then\/ $\dim
X\leq k+m$.\qed
\end{theo}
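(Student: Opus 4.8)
The plan is to deduce $\dim X\le k+m$ from the classical characterisation of covering dimension by partitions, valid for all normal spaces: $\dim X\le k+m$ iff every family of $k+m+1$ pairs of disjoint closed sets of $X$ admits partitions with empty common intersection. So I would fix arbitrary disjoint closed pairs $(A_0,B_0),\dots,(A_{k+m},B_{k+m})$ in $X$ and look for partitions $L_0,\dots,L_{k+m}$, where $L_i$ is a partition between $A_i$ and $B_i$, with $\bigcap_{i=0}^{k+m}L_i=\emptyset$. First I would encode the first $k$ pairs as a map into a sphere: choose Urysohn functions $\varphi_i\colon X\to[0,1]$ with $\varphi_i(A_i)=\{0\}$, $\varphi_i(B_i)=\{1\}$ for $i<k$, set $\Phi=(\varphi_0,\dots,\varphi_{k-1})\colon X\to[0,1]^k$, and let $F=\Phi^{-1}(\bd[0,1]^k)$, a closed subset of $X$ containing every $A_i\cup B_i$ with $i<k$. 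Identifying $\bd[0,1]^k$ with $S^{k-1}$ and applying the hypothesis to $\Phi|_F\colon F\to\bd[0,1]^k$, I obtain a closed $P\subset X$ with $P\cap F=\emptyset$, $\dim P\le m$, and an extension $g\colon X\setminus P\to\bd[0,1]^k$ of $\Phi|_F$; note that $A_i\cup B_i\subset F\subset X\setminus P$ for $i<k$.

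Next I would dispose of the remaining $m+1$ pairs inside $P$. Since $\dim P\le m$ and $P$ is closed in $X$, a routine induction on $m$ --- using that in the $\le m$-dimensional normal space $P$ disjoint closed sets have a partition of dimension $\le m-1$, and that a partition of the closed subspace $P$ extends to a partition of $X$ (both standard, see \cite{eng-95}) --- yields partitions $L_k,\dots,L_{k+m}$ in $X$, $L_j$ between $A_j$ and $B_j$, such that $R:=\bigcap_{j=k}^{k+m}L_j$ is disjoint from $P$; thus $R$ is a closed subset of $X\setminus P$, on which $g$ is defined.

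Now I would join the two halves. As $F\cup R$ is a closed subset of $X\setminus P$, the map $g|_{F\cup R}$ is defined, and by the Tietze extension theorem applied coordinatewise it extends to $\widehat\Psi\colon X\to[0,1]^k$. For $i<k$ one has $\widehat\Psi|_{A_i}=g|_{A_i}=\Phi|_{A_i}$, so the $i$-th coordinate $\widehat\Psi_i$ sends $A_i$ to $0$ and $B_i$ to $1$; hence $L_i:=\widehat\Psi_i^{-1}(\tfrac12)$ is a partition in $X$ between $A_i$ and $B_i$, and $\bigcap_{i=0}^{k-1}L_i=\widehat\Psi^{-1}(\mathbf c)$ with $\mathbf c=(\tfrac12,\dots,\tfrac12)$. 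Since $\widehat\Psi$ agrees with $g$ on $R$ and $g$ maps into $\bd[0,1]^k$, which does not contain $\mathbf c$, we get $\widehat\Psi^{-1}(\mathbf c)\cap R=\emptyset$, i.e.\ $\bigcap_{i=0}^{k+m}L_i=\bigl(\bigcap_{i=0}^{k-1}L_i\bigr)\cap R=\emptyset$, as required. (For $m=-1$ there are no pairs of index $\ge k$, $P=\emptyset$, $R=X$, and the argument reduces to Alexandroff's theorem on extending maps into $S^{k-1}$.)

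I expect the main obstacle to lie at the interface between the two kinds of data. The Tietze ``enlargement'' $\widehat\Psi$ is the crux: its virtue is that $\widehat\Psi^{-1}(\mathbf c)$ automatically misses every closed subset of $X\setminus P$ on which $\widehat\Psi$ has been prescribed to equal $g$. On the other side, the lemma of the second paragraph requires extending partitions out of the closed subspace $P$ with control on their traces, where the customary care with exact preimages in a general normal space is needed; I would simply invoke the standard development in \cite{eng-95}. I also note that the ``moral'' proof --- decompose $X=(X\setminus P)\cup P$ and apply a sum theorem --- does not work directly, since $X\setminus P$ need not be normal; working with partitions inside $X$ is what makes everything go through.
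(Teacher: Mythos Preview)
The paper does not supply its own proof of this theorem: it is quoted as Lisitsa's result, with references to \cite{lis-79} and \cite[Problem 1.9.D]{eng-95}, and closed with a \qed\ box immediately after the statement. So there is nothing in the paper to compare your argument against.

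On its own merits your proof is correct. The Eilenberg--Otto characterisation of $\dim X\le k+m$ by $k+m+1$ partitions with empty intersection is valid for normal spaces; the map $\Phi|_F$ into $\bd[0,1]^k\cong S^{k-1}$ is a legitimate input to the hypothesis; the partition-extension step (given $\dim P\le m$, find partitions $L_k,\dots,L_{k+m}$ in $X$ whose intersection misses $P$) is a direct consequence of the Eilenberg--Otto theorem applied to the normal space $P$ together with the standard lemma that a partition in a closed subspace of a normal space enlarges to a partition in the ambient space with no larger trace on the subspace (see, e.g., \cite[1.2.9 or 3.1.1]{eng-95}); and the Tietze extension $\widehat\Psi$ of $g|_{F\cup R}$ does exactly what you need, since $F\cup R$ is closed in $X$ and $g$ takes values in $\bd[0,1]^k$. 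Your final remark about why the ``moral'' sum-theorem proof fails is also to the point. One small comment: the inductive phrasing in your second paragraph is unnecessary---once you have partitions $L_j'$ in $P$ with empty intersection (directly from $\dim P\le m$), one application of the partition-extension lemma to each $L_j'$ already gives $R\cap P\subset\bigcap_j L_j'=\emptyset$.
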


\begin{cor}\label{dim-partition}
Let $k\geq 1$, $m\geq -1$ be integers, and $X$ a normal space. If~every closed $\partial\Delta^{k}$-tuple of~$X$ has a
$\partial\Delta^{k}$-partition $P$ such that\/ $\dim P\leq m$ and the complement $X\setminus P$ is a normal space, then\/ $\dim X\leq
k+m$.
\end{cor}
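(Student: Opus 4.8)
The strategy is to deduce the corollary from Lisitsa's theorem (Theorem~\ref{lis}) by translating the hypothesis about $\partial\Delta^k$-partitions into the hypothesis about extending maps to the sphere $S^{k-1}=|\partial\Delta^k|$ that Theorem~\ref{lis} requires. So let $f\colon F\to S^{k-1}$ be a map from a closed subset $F$ of $X$; we must produce a closed set $P\subset X$ with $P\cap F=\emptyset$, $\dim P\leq m$, and an extension of $f$ over $X\setminus P$. First I would identify $S^{k-1}$ with $|\partial\Delta^k|$ and pull back the canonical closed $\partial\Delta^k$-cover $\mathcal{K}=(K_0,\dots,K_k)$ of $|\partial\Delta^k|$ from the paragraph preceding Lemma~\ref{extend}, obtaining the closed $\partial\Delta^k$-tuple $f^{-1}(\mathcal{K})$ in $F$. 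Since $F$ is closed in the normal space $X$, this is also a closed $\partial\Delta^k$-tuple of $X$ after replacing each $f^{-1}(K_i)$ by itself (it remains closed in $X$), so the hypothesis of the corollary applies.

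The main point is to arrange that the $\partial\Delta^k$-partition $P$ furnished by the hypothesis is disjoint from $F$ and that $f$ extends over $X\setminus P$. The obstacle is that a bare application of the hypothesis to $f^{-1}(\mathcal{K})$ gives a partition that need not avoid $F$. To fix this, I would first enlarge $f^{-1}(\mathcal{K})$: using the ANR property of $S^{k-1}$ (or directly Lemma~\ref{extend}'s context), extend $f$ to a neighbourhood, then take an open $\partial\Delta^k$-tuple $\mathcal{W}$ in $X$ whose traces on $F$ contain $f^{-1}(\mathcal{K})$ appropriately and whose closures form a $\partial\Delta^k$-tuple (invoking the corollary to \cite[Theorem~7.1.4]{eng-89} quoted in Section~1), chosen so that $\bigcup\mathcal{W}\supset F$; this uses that on $F$ itself $f^{-1}(\mathcal{K})$ already covers, so a small open swelling covers a neighbourhood of $F$. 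Applying the hypothesis of the corollary to the closed $\partial\Delta^k$-tuple $\cl\mathcal{W}$ yields a $\partial\Delta^k$-partition $P$ with $\dim P\leq m$ and $X\setminus P$ normal; since $\bigcup\mathcal{W}$ is an open neighbourhood of $F$ contained in the open $\partial\Delta^k$-tuple witnessing $P$, we get $P\cap F=\emptyset$ automatically.

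Next I would check that $f$ extends over $X\setminus P$. By construction $X\setminus P=\bigcup\mathcal{V}$ for the open $\partial\Delta^k$-tuple $\mathcal{V}$ corresponding to $P$, and $\mathcal{V}$ is a $\partial\Delta^k$-neighbourhood of $f^{-1}(\mathcal{K})$ inside the normal space $X\setminus P$ (here one restricts everything to $X\setminus P$, which is normal by hypothesis, and notes $F\subset X\setminus P$). Then Lemma~\ref{extend}, applied with $X\setminus P$ in place of $X$, gives an extension of $f$ from $X\setminus P$ to $|\partial\Delta^k|=S^{k-1}$. Thus all three requirements of Theorem~\ref{lis} are met, and it yields $\dim X\leq k+m$, as desired.

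I expect the delicate bookkeeping step to be the construction of $\mathcal{W}$: one must simultaneously ensure (i) $\cl\mathcal{W}$ is a closed $\partial\Delta^k$-tuple so the hypothesis of the corollary applies, (ii) $\bigcup\mathcal{W}$ is a neighbourhood of $F$ so the resulting partition avoids $F$, and (iii) $\mathcal{W}$ restricted to $F$ still "carries" $f$ in the sense needed for Lemma~\ref{extend} on $X\setminus P$. The first two are handled by the swelling lemma from Section~1 together with the fact that $f^{-1}(\mathcal{K})$ covers $F$; the third follows because the $\partial\Delta^k$-neighbourhood $\mathcal{V}$ produced from $\cl\mathcal{W}$ automatically contains $f^{-1}(\mathcal{K})|(X\setminus P)$, so Lemma~\ref{extend}'s hypothesis holds verbatim. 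Everything else is a routine transcription between the combinatorial ($\partial\Delta^k$-partition) and homotopical ($S^{k-1}$-extension) languages.
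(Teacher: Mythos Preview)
Your approach is correct and follows the paper's route: reduce to Lisitsa's theorem via Lemma~\ref{extend}. However, the step you flag as ``delicate bookkeeping'' is unnecessary, because your worry that ``a bare application of the hypothesis to $f^{-1}(\mathcal{K})$ gives a partition that need not avoid $F$'' is unfounded. The tuple $\mathcal{K}$ is a \emph{cover} of $|\partial\Delta^k|$, so $f^{-1}(\mathcal{K})$ is a cover of $F$; hence any $\partial\Delta^k$-neighbourhood $\mathcal{U}$ of $f^{-1}(\mathcal{K})$ satisfies $F=\bigcup f^{-1}(\mathcal{K})\subset\bigcup\mathcal{U}=X\setminus P$, and $P\cap F=\emptyset$ is automatic. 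The paper therefore applies the hypothesis of the corollary directly to the closed $\partial\Delta^k$-tuple $\mathcal{F}=f^{-1}(\mathcal{K})$, obtains $P$ and the corresponding $\mathcal{U}$ with $\bigcup\mathcal{U}$ normal, and invokes Lemma~\ref{extend} in $\bigcup\mathcal{U}$ without any intermediate swelling $\mathcal{W}$. Your detour through $\cl\mathcal{W}$ works but can be deleted entirely.
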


\begin{proof}
In order to use Lisitsa's theorem, take a map $f\colon F\to |\partial\Delta^k|$, where $F\subset X$ is closed. Consider the closed
$\partial\Delta^k$-cover $\mathcal{F}=f^{-1}(\mathcal{K})$ of~$F$. If $\mathcal{F}$~has a $\partial\Delta^{k}$-neigh\-bour\-hood
$\mathcal{U}$ such that the corresponding $\partial\Delta^{k}$-partition $P=X\setminus\bigcup\mathcal{U}$ satisfies the inequality
$\dim P\leq m$ and $U=\bigcup\mathcal{U}$ is normal, then $f$~extends to a map $f'\colon U\to |\partial\Delta^{k}|$ by Lemma
\ref{extend}. Therefore, $\dim X\leq k+m$ by Lisitsa's theorem.
\end{proof}

It is clear why the extension Lemma \ref{extend} and the upper bound of the covering dimension in Theorem \ref{lis} need a normality
assumption. The natural range of applications of Corollary \ref{dim-partition} is the class of hereditarily normal spaces. In view of
\cite[Lemma 6]{chk-12}, the corollary implies Lisitsa's theorem for any hereditarily normal space~$X$. They both should be compared
with \cite[Problems 2.2.B]{eng-95}---it is easily checked that they all three together imply Theorem \ref{equiv}. We do not know {\em
if either the hereditary normality or the normality of the complement in the corollary is a necessary assumption.}

\begin{lemma}\label{smaller-partition}
Suppose that $X$ is a metric space, $\mathcal{U}$ is an open $K$-tuple of $X$, and $P=X\setminus\bigcup\mathcal{U}$ is the
corresponding partition. If\/ $\Ind\dobs{K}\mathcal{U}<\Ind P\in\mathbb{N}$, then $\mathcal{U}$ has a $K$-neighbourhood whose
corresponding partition $Q$ has $\Ind Q<\Ind P$.
\end{lemma}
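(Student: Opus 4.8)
The plan is to enlarge the members of $\mathcal{U}$ near the obstruction set and then shrink the resulting partition using the classical separation theory of $\Ind$ for metric spaces. Write $D=\dobs{K}\mathcal{U}$ and recall from the definition that $D\cap\bigcup\mathcal{U}=\emptyset$, so $D\subset P$. First I would note that $D$ is closed in $X$: by Lemma \ref{obs} (or directly from the definition, via a routine argument) it is an intersection of closed sets. Set $m=\Ind D$, so $m<\Ind P=:n$ by hypothesis. The idea is that ``most'' of $P$, namely $P\setminus D$, consists of points that can be absorbed into a $K$-neighbourhood of $\mathcal{U}$, and $\Ind(P\setminus D)$ can be controlled.

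Next I would produce, for each point $x\in P\setminus D$, a $K$-neighbourhood $\mathcal{V}^x$ of $\mathcal{U}$ with $x\in\bigcup\mathcal{V}^x$, and I want to glue these together. The cleanest route is to work coordinatewise: since the difference $\bigcup\mathcal{V}^x\setminus\bigcup\mathcal{U}$ can be taken inside an arbitrarily small neighbourhood of $x$, and since $X$ is metric hence paracollectionwise-normal, one can choose a locally finite open refinement and patch to obtain a single open $K$-tuple $\mathcal{W}\supset\mathcal{U}$ with $P\setminus D\subset\bigcup\mathcal{W}$; here one uses the standard fact that being a $K$-tuple is preserved under such locally finite unions provided the enlargements at different points have disjoint-enough supports, together with the shrinking corollary to \cite[Theorem 7.1.4]{eng-89} quoted before Definition \ref{def1} to restore closedness if needed. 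Thus the corresponding partition $Q'=X\setminus\bigcup\mathcal{W}$ satisfies $Q'\subset D$, whence $\Ind Q'\le\Ind D=m<n$. At this point, however, $Q'$ need no longer contain $P$ and we may have lost the property that $\mathcal{W}$ is a genuine $K$-neighbourhood of $\mathcal{U}$ with $\cl\mathcal{W}$ still a $K$-tuple — which is why one must be careful and possibly combine this with a metric separation step rather than a crude union.

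The main obstacle, and the step I would spend the most care on, is exactly this gluing: ensuring that the enlarged tuple $\mathcal{W}$ remains a $K$-tuple (i.e.\ that no new nonempty intersection $\bigcap_{i\in I}W_i$ appears over a non-face $I$) while simultaneously absorbing all of $P\setminus D$. The safe way is to enlarge one coordinate at a time, exactly as in the proof of Lemma \ref{obs}: given $x\in P\setminus D$ with witness $\mathcal{V}^x$, there is an index $i=i(x)$ with $x\in V_i^x$, and one adds to $U_i$ a small open set $W_x\ni x$ disjoint from $\bigcup_{j\ne i}U_j$ (shrinking so that $\cl W_x$ is still disjoint from it); doing this over a locally finite family and taking unions within each fixed coordinate keeps the $K$-tuple property, because an intersection over a non-face $I$ would force two of the enlarging sets from complementary coordinates to meet, contradicting the disjointness arrangement. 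Finally, $Q=X\setminus\bigcup\mathcal{W}\subset D$ gives $\Ind Q\le m<n=\Ind P$, and applying the corollary to \cite[Theorem 7.1.4]{eng-89} to $\cl\mathcal{W}$ we may assume $\mathcal{W}$ is the required $K$-neighbourhood with $\cl\mathcal{W}$ a $K$-tuple, which completes the argument.
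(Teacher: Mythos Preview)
Your strategy aims to enlarge $\mathcal{U}$ to a $K$-neighbourhood $\mathcal{W}$ whose partition $Q=X\setminus\bigcup\mathcal{W}$ is contained in $D=\dobs{K}\mathcal{U}$, so that $\Ind Q\le\Ind D<\Ind P$. This goal is in general unattainable, and the gluing step cannot be rescued. Take $K=\{0,1\}=\partial\Delta^1$, $X=[0,1]$, $U_0=[0,\tfrac13)$, $U_1=(\tfrac23,1]$. Then $P=[\tfrac13,\tfrac23]$ with $\Ind P=1$, while $D=\cl U_0\cap\cl U_1=\emptyset$ by Lemma~\ref{obs}. A $K$-neighbourhood of $\mathcal{U}$ with partition contained in $D$ would be a pair of disjoint open sets covering the connected space $[0,1]$, which is impossible. (The lemma only promises a partition $Q$ with $\Ind Q\le 0$, e.g.\ $Q=\{\tfrac12\}$, not $Q=\emptyset$.) More concretely, your ``safe'' coordinatewise enlargement breaks down because points of $P$ near $\tfrac13$ are forced into coordinate $0$ and points near $\tfrac23$ into coordinate $1$; by connectedness these assignments must collide, and no locally finite family of small sets $W_x$ can be made pairwise disjoint while still covering $P\setminus D$. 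Pairwise disjointness is exactly what your verification of the $K$-tuple property for the glued $\mathcal{W}$ needs, and it is unavailable once $\dim(P\setminus D)>0$. Note also that your argument never uses the hypothesis $\Ind P\in\mathbb{N}$, which is a warning sign.

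The paper's proof takes a different route that does \emph{not} attempt to force $Q\subset D$. In the case $D=\emptyset$ it forms, for each $i$, the union $W_i$ of the $i$-th coordinates of all $K$-neighbourhoods of $\mathcal{U}$; these $W_i$ cover $X$ but need not form a $K$-tuple. One then takes a closed shrinking $F_i\subset W_i$ and, invoking the $\Ind$-separation theorem \cite[Theorem~4.1.13]{eng-95}, chooses open $W_i'$ with $F_i\subset W_i'\subset\cl W_i'\subset W_i$ and $\Ind(P\cap\bd W_i')<\Ind P$. Disjointifying to $W_i''=W_i'\setminus\cl(W_0'\cup\ldots\cup W_{i-1}')$ and setting $V_i=U_i\cup W_i''$ gives a $K$-neighbourhood whose partition $Q$ lies in $P\cap\bigcup_i\bd W_i'$; the finite sum theorem then yields $\Ind Q<\Ind P$. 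The general case is reduced to this one by working in $X\setminus D$ and appending $D$ to the resulting partition. The essential input you are missing is this use of the separation theorem to control boundary dimension rather than an attempt to absorb all of $P\setminus D$ into $\bigcup\mathcal{W}$.
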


\begin{proof}
Write $m=\Ind P$. At first, we shall prove the lemma under the assumption that $\dobs{K}\mathcal{U}=\emptyset$. Then, let
$$W_i=\bigcup\{V_i\colon \mathcal{V}=(V_0,\ldots,V_k)\mbox{ is a $K$-neighbourhood of $\mathcal{U}$.}\}$$ for $i=0,\ldots,k$. Since
$\dobs{K}\mathcal{U}=\emptyset$, the sets $W_i$ form an open cover of~$X$, and the cover has a closed shrinking that consists of sets
$F_i\subset W_i$. For each $i$, there exists an open set $W_i'$ such that $F_i\subset W_i'\subset\cl W_i'\subset W_i$ and $\Ind
(P\cap\bd W_i')<m$ (\cite[Theorem 4.1.13]{eng-95}). Let $W_0''=W_0'$ and $W_i''=W_i'\setminus \cl(W_0'\cup\ldots\cup W_{i-1}')$ for
$0<i\leq k$. From the two facts that the sets $W_i'$ cover $X$ and $\bd W_i''\subset\bd W_0'\cup\ldots\cup\bd W_i'$, we infer that
$Q=P\setminus (W_0''\cup\ldots\cup W_k'')\subset P\cap(\bd W_0'\cup\ldots\cup \bd W_k')$. We obtain $\Ind Q<m$ by the countable sum
theorem (\cite[Theorem 4.1.9]{eng-95}). As easily checked, the unions $V_i=U_i\cup W_i''$ form a $K$-neigh\-bour\-hood $\mathcal{V}$
of~$\mathcal{U}$, and $Q=X\setminus\bigcup\mathcal{V}$.

Assume that $\Ind\dobs{K}\mathcal{U}<m$. Let $X_0=X\setminus\dobs{K}\mathcal{U}$\/ and $P_0=P\setminus\dobs{K}\mathcal{U}$. Then,
$\mathcal{U}$ has no $K$-obstruction points in $X_0$, and by the first part of the proof, there exists a $K$-neighbourhood
$\mathcal{V}$ of $\mathcal{U}$ in $X_0$ with the corresponding $K$-partition $Q_0=X_0\setminus\bigcup\mathcal{V}$ and $\Ind Q_0<m$.
Now, $Q=Q_0\cup\dobs{K}\mathcal{U}$ corresponds to $\mathcal{V}$ in~$X$, and $\Ind Q<m$ by the countable sum theorem.
\end{proof}

The foregoing lemma is also true when $X$ is a {\em strongly hereditarily normal space}\/ (see \cite[Definition 2.1]{eng-95}). To
prove this, one applies \cite[the statements 2.2.4, 2.3.6 and 2.3.7]{eng-95} instead of theorems on dimension in the class of metric
spaces.

\begin{prop}\label{big-space1}
Let $k\!\geq\! 1$ and $m\!\geq\! 0$. If $X$ is a metric space with\/ $\dim X\!\geq\! k+m$, then there exists a closed
$\partial\Delta^k$-tuple $\mathcal{F}$ of $X$ such that every $\partial\Delta^k$-neigh\-bour\-hood $\mathcal{U}$ of $\mathcal{F}$
satisfies the following alternative: $\dim\dobs{\partial\Delta^k}\mathcal{U}=m$ or the corresponding\linebreak
$\partial\Delta^k$-partition $P=X\setminus\bigcup\mathcal{U}$ has $\dim P>m$.

In particular, if $n\geq 1$ and\/ $\dim X=k(n+1)-1$, then $\dInd{\partial\Delta^k} X= n$ and\/ $\dstr{\partial\Delta^k} X=1$.
\end{prop}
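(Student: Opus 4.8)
The plan is to produce the closed $\partial\Delta^k$-tuple $\mathcal{F}$ by a direct argument using Corollary \ref{dim-partition}. First I would argue by contradiction: suppose that for \emph{every} closed $\partial\Delta^k$-tuple $\mathcal{F}$ of $X$ there is a $\partial\Delta^k$-neighbourhood $\mathcal{U}$ of $\mathcal{F}$ with $\dim\dobs{\partial\Delta^k}\mathcal{U}<m$ \emph{and} $\dim P\leq m$, where $P=X\setminus\bigcup\mathcal{U}$. By Lemma \ref{smaller-partition} (applied in the metric space $X$, using $\Ind=\dim$ for metric spaces), since $\dobs{\partial\Delta^k}\mathcal{U}$ has dimension strictly below $\dim P$ whenever $\dim P=m$, we may shrink $\mathcal{U}$ to a $\partial\Delta^k$-neighbourhood whose corresponding partition $Q$ has $\dim Q<m$, i.e.\ $\dim Q\leq m-1$; and if $\dim P<m$ already we take $Q=P$, again getting $\dim Q\leq m-1$. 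In either case $X\setminus Q=\bigcup\mathcal{U}$ is open in the metric space $X$, hence normal. Thus every closed $\partial\Delta^k$-tuple of $X$ admits a $\partial\Delta^k$-partition $Q$ with $\dim Q\leq m-1$ and normal complement, so Corollary \ref{dim-partition} gives $\dim X\leq k+(m-1)=k+m-1<k+m$, contradicting the hypothesis $\dim X\geq k+m$. This yields the desired $\mathcal{F}$.

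For the ``in particular'' clause, set $m=k(n+1)-1-k=kn-1$ when $n\geq 1$ (so $k+m=k(n+1)-1=\dim X$, and $m\geq 0$ since $k\geq 1$, $n\geq 1$). First I would establish $\dInd{\partial\Delta^k} X\leq n$: by Theorem \ref{equiv}, $\dInd{\partial\Delta^k} X< n+1$ is equivalent to $\dim X<k(n+1)$, which holds since $\dim X=k(n+1)-1$. For the reverse inequality and the strength degree simultaneously, take the tuple $\mathcal{F}$ produced above. Any $\partial\Delta^k$-neighbourhood $\mathcal{U}$ of $\mathcal{F}$ with $\dobs{\partial\Delta^k}\mathcal{U}=\emptyset$ must, by the alternative, have $\dim P>m=kn-1$, i.e.\ $\dim P\geq kn$, so $\dInd{\partial\Delta^k} P\geq n$ again by Theorem \ref{equiv}; hence $\dInd{\partial\Delta^k} P$ is \emph{not} $<n$. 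This shows no $\partial\Delta^k$-neighbourhood of $\mathcal{F}$ with empty obstruction set has a partition of $\dInd{\partial\Delta^k}$ below $n$, so $\dstr{\partial\Delta^k} X\neq 0$, i.e.\ $\dstr{\partial\Delta^k} X=1$ (which also forces $\dInd{\partial\Delta^k} X=n$, since $\dstr{}$ is $0$ by convention unless the dimension is a positive finite ordinal, and we already have $\dInd{\partial\Delta^k}X\le n$ with $n\ge 1$; one also needs $\dInd{\partial\Delta^k} X\geq n$, which follows because if it were $<n$ the dimensional strength would be $0$ by the convention, a contradiction—alternatively one reads $\dInd{\partial\Delta^k}X\ge n$ directly from the fact that $\mathcal F$ witnesses that no $\partial\Delta^k$-partition $P$ of it can have $\dim P\le kn-1$, hence none has $\dInd{\partial\Delta^k}P<n$).

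The step I expect to be the main obstacle is the careful bookkeeping in the contradiction argument: making sure that the hypothesis ``$\dim\dobs{\partial\Delta^k}\mathcal{U}<m$'' (the negation of ``$=m$'') combined with ``$\dim P\leq m$'' genuinely feeds Lemma \ref{smaller-partition} — the lemma requires $\Ind\dobs{K}\mathcal{U}<\Ind P\in\mathbb{N}$, so I must separate the cases $\dim P=m$ (where the strict inequality $\dim\dobs{}<m=\dim P$ is exactly what is needed) from $\dim P<m$ (where no shrinking is needed), and in both obtain a partition of dimension $\leq m-1$ with open, hence normal, complement. Everything else is a routine application of Theorem \ref{equiv}, Corollary \ref{dim-partition}, and the definition of $\dstr{\partial\Delta^k}$.
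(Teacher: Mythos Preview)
Your argument for the first part is correct and is essentially the paper's proof, just packaged as a single contradiction rather than first invoking the contrapositive of Corollary~\ref{dim-partition} (with $m-1$ in place of $m$) to pin down an~$\mathcal{F}$ all of whose partitions have $\dim\ge m$, and then checking the alternative for that specific tuple via Lemma~\ref{smaller-partition}. One cosmetic slip: the negation of ``$\dim\dobs{\partial\Delta^k}\mathcal{U}=m$'' is a priori ``$\neq m$''; you silently use $\dobs{\partial\Delta^k}\mathcal{U}\subset P$ (so $\dim\dobs{\partial\Delta^k}\mathcal{U}\le\dim P\le m$) to reduce this to ``$<m$'', which is harmless but should be said. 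Also, after Lemma~\ref{smaller-partition} the complement of $Q$ is $\bigcup\mathcal{V}$, not $\bigcup\mathcal{U}$; again harmless, since it is still open and hence normal.

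There is, however, a genuine gap in the ``in particular'' clause: your derivation of $\dInd{\partial\Delta^k} X\ge n$ does not work. The first route is circular: you need $0<\dInd{\partial\Delta^k} X<\infty$ \emph{before} you are entitled to conclude $\dstr{\partial\Delta^k}X=1$, not the other way round; and the claim ``if $\dInd{\partial\Delta^k}X<n$ then the strength degree is $0$ by convention'' is simply false---the convention only forces $\dstr{\partial\Delta^k}X=0$ when the dimension is $-1$, $0$, or $\infty$. The alternative route (``$\mathcal{F}$ witnesses that no partition can have $\dim P\le kn-1$'') is also wrong: the alternative you proved allows $\dim P=m=kn-1$ (in the branch $\dim\dobs{\partial\Delta^k}\mathcal{U}=m$), so $\mathcal{F}$ may well admit partitions with $\dInd{\partial\Delta^k}P<n$; this does not bound $\dInd{\partial\Delta^k}X$ from below.

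The fix is immediate and is exactly what the paper does: Theorem~\ref{equiv} gives $\dInd{\partial\Delta^k}X=n$ directly, since $kn\le\dim X=k(n+1)-1<k(n+1)$. Once $\alpha=n$ is established, your contrapositive computation for the strength degree (any $\mathcal{U}$ with $\dobs{\partial\Delta^k}\mathcal{U}=\emptyset$ forces $\dim P>kn-1$, hence $\dInd{\partial\Delta^k}P\ge n$) is sound and is the paper's argument read backwards.
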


\begin{proof}
Let $X$ be metric, and $\dim X\geq k+m$. By Corollary \ref{dim-partition}, there is a closed $\partial\Delta^k$-tuple $\mathcal{F}$
whose every $\partial\Delta^k$-neigh\-bour\-hood $\mathcal{U}$ has $\dim (X\setminus\bigcup\mathcal{U})\geq m$. Thus, if
$P=X\setminus\bigcup\mathcal{U}$ and $\dim P=m$, then $\dim\dobs{\partial\Delta^k}\mathcal{U}=m$ by Lemma \ref{smaller-partition}.

If $\dim X=k(n+1)-1$, then Theorem \ref{equiv} implies that $\dInd{\partial\Delta^k} X= n$. For $m=kn-1\geq 0$, let $\mathcal{F}$ be a
closed $\partial\Delta^k$-tuple whose every $\partial\Delta^k$-neighbourhood $\mathcal{U}$ satisfies the stated alternative. If
$P=X\setminus\bigcup\mathcal{U}$ has $\dInd{\partial\Delta^k} P<n$, then $\dim P\leq kn-1$ by Theorem \ref{equiv}, and
$\dim\dobs{\partial\Delta^k}\mathcal{U}=kn-1$. This means that $\dstr{\partial\Delta^k} X=1$.
\end{proof}

\begin{prop}\label{big-space2}
Let $k\geq 2$ and $n\geq 1$. If\/ $X$ is a metric space with\/ $\dim X=kn$, then $\dInd{\partial\Delta^k} X= n$ and\/
$\dstr{\partial\Delta^k} X=0$.
\end{prop}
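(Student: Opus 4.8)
The equality $\dInd{\partial\Delta^k} X=n$ follows from Theorem~\ref{equiv}: $\dim X=kn<k(n+1)$ gives $\dInd{\partial\Delta^k} X<n+1$, and since $\dim X=kn$ is not less than $kn$, Theorem~\ref{equiv} gives $\dInd{\partial\Delta^k} X\geq n$ (here $n\geq 1$ is needed). So the whole task is to verify that $\dstr{\partial\Delta^k} X=0$, i.e.\ that an arbitrary closed $\partial\Delta^k$-tuple $\mathcal{F}=(F_0,\dots,F_k)$ of $X$ has a $\partial\Delta^k$-neighbourhood $\mathcal{U}$ with $\dobs{\partial\Delta^k}\mathcal{U}=\emptyset$ and $\dInd{\partial\Delta^k}(X\setminus\bigcup\mathcal{U})<n$; by Theorem~\ref{equiv} this last inequality just says $\dim(X\setminus\bigcup\mathcal{U})\leq kn-1$. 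The plan is to first make the obstruction set empty and then to thin the partition into a prescribed $(kn-1)$-dimensional subspace.

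By the corollary to \cite[Theorem~7.1.4]{eng-89} recalled in Section~1, $\mathcal{F}$ has a $\partial\Delta^k$-neighbourhood $\mathcal{G}=(G_0,\dots,G_k)$ for which $\cl\mathcal{G}$ is a $\partial\Delta^k$-tuple, i.e.\ $\bigcap_{i=0}^k\cl G_i=\emptyset$; Lemma~\ref{obs} then gives $\dobs{\partial\Delta^k}\mathcal{G}=\bigcap_{i=0}^k\cl(\bigcap_{j\neq i}G_j)\subseteq\bigcap_{i=0}^k\cl G_i=\emptyset$. Since $\dim X=kn$, the decomposition theorem \cite[Theorem~4.1.17]{eng-95} provides subspaces $Z_0,\dots,Z_{kn}$ with $X=Z_0\cup\dots\cup Z_{kn}$ and $\dim Z_l\leq 0$ for all $l$; put $X_2=Z_{kn}$ and $X_1=Z_0\cup\dots\cup Z_{kn-1}$, so that $\dim X_2\leq 0$ while $\dim X_1\leq kn-1$ (again by \cite[Theorem~4.1.17]{eng-95}). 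Because $\dobs{\partial\Delta^k}\mathcal{G}=\emptyset$, every point of $X$ lies in the union of some $\partial\Delta^k$-neighbourhood of $\mathcal{G}$, so the sets $W_i=\bigcup\{V_i:\mathcal{V}=(V_0,\dots,V_k)\text{ is a }\partial\Delta^k\text{-neighbourhood of }\mathcal{G}\}$ form an open cover of $X$ with $G_i\subseteq W_i$; in particular $\{W_i\cap X_2\}$ is an open cover of $X_2$.

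The key step is to enlarge $\mathcal{G}$, inside the $W_i$'s, to a $\partial\Delta^k$-neighbourhood $\mathcal{U}=(U_0,\dots,U_k)$ of $\mathcal{F}$ with $\dobs{\partial\Delta^k}\mathcal{U}=\emptyset$ and $X_2\subseteq\bigcup\mathcal{U}$. Once this is done, $X\setminus\bigcup\mathcal{U}\subseteq X\setminus X_2\subseteq X_1$, whence $\dim(X\setminus\bigcup\mathcal{U})\leq kn-1$ and, by Theorem~\ref{equiv}, $\dInd{\partial\Delta^k}(X\setminus\bigcup\mathcal{U})<n$ --- exactly what $\dstr{\partial\Delta^k} X=0$ requires. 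To construct $\mathcal{U}$ I would proceed as in the first part of the proof of Lemma~\ref{smaller-partition}: take a closed shrinking of $\{W_i\}$ and enlarge each $G_i$ within $W_i$, but now with the enlargement over $X_2$ governed by an open shrinking $\{B_i\}$ of $\{W_i\cap X_2\}$ of order $\leq k$ (that is, with $\bigcap_{i=0}^k B_i=\emptyset$), which exists since $\dim X_2\leq 0\leq k-1$. Arranging the closures so that $\bigcap_{i=0}^k\cl U_i=\emptyset$ (which forces $\dobs{\partial\Delta^k}\mathcal{U}=\emptyset$ by Lemma~\ref{obs}) while simultaneously pushing $X_2$ into $\bigcup\mathcal{U}$ and keeping $G_i\subseteq U_i$ is the delicate bookkeeping, and it is the main obstacle of the proof.

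It is precisely here that $k\geq 2$ is indispensable: a zero-dimensional $X_2$ is usually dense in $X$, so the condition $X_2\subseteq\bigcup\mathcal{U}$ forces $\bigcup_{i=0}^k\cl U_i=X$; for $k=1$ this, together with $\cl U_0\cap\cl U_1=\emptyset$, would exhibit each connected piece of $X$ as a union of two disjoint non-empty closed sets --- which is impossible, and is exactly why $\dstr{\{0,1\}} X=1$ whenever $\trInd X$ is a successor ordinal. When $k+1\geq 3$, by contrast, a connected space can be covered by closed sets with empty total intersection (three parallel ``slabs'' already do so for a Euclidean space), and this combinatorial freedom, together with the zero-dimensionality of $X_2$, is what I expect to make the splicing go through.
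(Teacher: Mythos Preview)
Your computation of $\dInd{\partial\Delta^k} X = n$ is fine, and your reduction of $\dstr{\partial\Delta^k} X = 0$ to finding a $\partial\Delta^k$-neighbourhood $\mathcal{U}$ of $\mathcal{F}$ with empty obstruction set and partition of dimension $\leq kn-1$ is correct. However, the key step is not carried out: you explicitly call it ``the main obstacle'' and leave the construction of $\mathcal{U}$ as a sketch. The bookkeeping you allude to---enlarging $\mathcal{G}$ inside the $W_i$'s so as to swallow a prescribed zero-dimensional $X_2$ while keeping $\bigcap_i \cl U_i = \emptyset$---is not obviously routine, since $X_2$ is typically dense and not closed in $X$, so clopen decompositions of $X_2$ do not lift directly to open sets in $X$ with controlled closures. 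As it stands, the proof has a genuine gap at exactly the point you identify.

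The paper's argument bypasses all of this with a much shorter and essentially one-line construction. One takes \emph{any} $\partial\Delta^k$-neighbourhood $\mathcal{V}$ of $\mathcal{F}$ and then a single open set $W$ with $\bigcup\mathcal{F}\subset W\subset\cl W\subset\bigcup\mathcal{V}$ and $\dim\bd W < kn$ (such $W$ exists in a metric space with $\dim X = kn$ by \cite[Theorem~4.1.13]{eng-95}). Now set $U_i = V_i \cap W$ for $i<k$ and $U_k = (V_k\cap W)\cup(X\setminus\cl W)$. The corresponding partition is exactly $\bd W$, hence of dimension $< kn$, so $\dInd{\partial\Delta^k}\bd W<n$ by Theorem~\ref{equiv}. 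For the obstruction set: for every index $i$ one checks that $\bigcap_{j\neq i} U_j = W\cap\bigcap_{j\neq i} V_j$ --- and this is precisely where $k\geq 2$ enters, since it guarantees that among the indices $j\neq i$ there is always at least one $j<k$, forcing the intersection into $W$ and killing the $X\setminus\cl W$ summand of $U_k$. Thus $\dobs{\partial\Delta^k}\mathcal{U}\subset\cl W\subset\bigcup\mathcal{V}$; but if $x\in V_m$ then $\bigcap_{j\neq m} V_j\subset X\setminus V_m$ (as $\mathcal{V}$ is a $\partial\Delta^k$-tuple), whence $x\notin\cl(W\cap\bigcap_{j\neq m} V_j)$. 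So $\dobs{\partial\Delta^k}\mathcal{U}=\emptyset$. No decomposition $X=X_1\cup X_2$ and no delicate enlargement procedure are needed; the single set $W$ with thin boundary, together with the asymmetric assignment of its exterior to one coordinate, does all the work.
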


\begin{proof}
If $X$ is metric and $\dim X=kn$, then $\dInd{\partial\Delta^k} X= n$ by Theorem \ref{equiv}.

Take a closed $\partial\Delta^{k}$-tuple $\mathcal{F}$ of~$X$, and find an open $\partial\Delta^k$-neigh\-bour\-hood $\mathcal{V}$
of~$\mathcal{F}$. There is an open set $W$ with $\bigcup\mathcal{F}\subset W\subset\cl W\subset \bigcup\mathcal{V}$ and $\dim\bd
W<kn$. Put $U_i=V_i\cap W$ for $i=0,\ldots,k-1$, $U_k=(V_k\cap W)\cup (X\setminus\cl W)$, and $P=\bd W$. Using Lemma \ref{obs} and the
inequality $k\geq 2$, one easily checks that $\dobs{\partial\Delta^k}\mathcal{U}=\emptyset$ for $\mathcal{U}=(U_0,\ldots,U_k)$.
Finally, we obtain $\dInd{\partial\Delta^k}P<n$ by Theorem \ref{equiv}. Therefore, $\dstr{\partial\Delta^k} X=0$.
\end{proof}

In Propositions \ref{big-space1}--\ref{big-space2} we have shown that if $k\geq 2$ and $1\leq n\in\mathbb{N}$, then there are two
degrees to which a compact metric space $X$ may have $\dInd{\partial\Delta^k} X=n$. Maybe there are more such (similar) degrees, but
at this moment we have neither good motivation nor good examples, which could help us to identify and point out appropriate
combinatorial properties of spaces in terms of $K$-neighbourhoods and $K$-partitions.

The formulas (a) and (c) in the next statement are generalisations of P.~Vop\v en\-ka's theorem \cite[p.\ 320]{vop-58} on the
classical~$\Ind$.

\begin{theo}\label{zero-product}
If $X$ and $Y$ are compact spaces and\/ $\dim X=0$, then
\begin{lister}
\item $\dInd{K} (X\times Y)=\dInd{K} Y$,
\item $\dstr{K} (X\times Y)=\dstr{K} Y$, and
\item $\dInd{L} (X\times Y)=\dInd{L} Y$.
\end{lister}
\end{theo}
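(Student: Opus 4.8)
The plan is to prove all three parts by transfinite induction on the relevant dimension of $Y$, treating (a)--(b) and (c) in parallel since they have the same structure. The base cases are routine: when $\dInd{K} Y=-1$ (i.e.\ $Y=\emptyset$) we have $X\times Y=\emptyset$, and when $\dInd{K} Y=0$ one notes that $X\times Y$ carries a closed $K$-tuple obstruction only through its projection to $Y$, so a $K$-partition of $Y$ pulls back. The two nontrivial inequalities for the inductive step are $\dInd{K}(X\times Y)\leq\dInd{K} Y$ (the ``hard'' direction) and $\dInd{K} Y\leq\dInd{K}(X\times Y)$ (the ``easy'' direction), and similarly for $L$.

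For the easy direction, given a closed $K$-tuple $\mathcal{F}$ of $Y$, I would pull it back via the projection $\pi\colon X\times Y\to Y$ to a closed $K$-tuple $\pi^{-1}(\mathcal{F})$ of $X\times Y$; by hypothesis this has a $K$-partition $P$ with $\dInd{K} P<\alpha$. The image $\pi(P)$ need not be a $K$-partition of $Y$, but since $\dim X=0$ and $X$ is compact, a standard argument (cover $X$ by disjoint clopen sets, refine) lets one push $P$ down to a genuine $K$-partition $P'$ of $Y$ with $P'\subset\pi(P)$, hence with $\dInd{K} P'\leq\dInd{K}\pi(P)\leq\dInd{K} P<\alpha$ — here one uses that $P'$ embeds as a closed subspace of $X\times P$ and invokes the inductive hypothesis together with monotonicity of $\dInd{K}$ on closed subspaces. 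For the hard direction, given a closed $K$-tuple $\mathcal{G}$ of $X\times Y$, the difficulty is that $\mathcal{G}$ does not come from $Y$; the key idea (for which I expect to exploit $\dim X=0$ crucially, via the acknowledged simplification from Charalambous) is to approximate: cover $X$ by finitely many pairwise disjoint clopen sets $X_1,\ldots,X_r$ fine enough that, on each slice $X_j\times Y$, the tuple $\mathcal{G}$ is close to a pulled-back tuple; on $Y$ itself pick $K$-partitions for the corresponding ``averaged'' closed $K$-tuples with $\dInd{K}<\alpha$, then reassemble these over the clopen decomposition of $X$ into a single $K$-partition $P$ of $X\times Y$. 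Each piece of $P$ lies in some $X_j\times(\text{partition of }Y)$, so by the inductive hypothesis and the sum theorem for $\dInd{K}$ over finitely many closed pieces, $\dInd{K} P<\alpha$.

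For part (b), once (a) is known the two degrees must be compared through the same clopen-slicing device: a $K$-neighbourhood $\mathcal{U}$ of a closed $K$-tuple of $X\times Y$ restricts on each slice $X_j\times Y$ to something essentially pulled back from a $K$-neighbourhood of $Y$, and one checks via Lemma~\ref{obs}-type reasoning that $\dobs{K}\mathcal{U}=\emptyset$ if and only if the corresponding obstruction sets downstairs are empty (using that $\pi$ restricted to a clopen slice is a trivial product projection, hence $\dobs{K}$ behaves well). Combining this with (a) gives $\dstr{K}(X\times Y)=0\iff\dstr{K} Y=0$. Part (c) is the verbatim analogue of (a) with the ANR formulation: a closed $K$-tuple is replaced by a map $f\colon F\to L$ from a closed $F\subset X\times Y$, and since $L$ is an ANR, $f$ extends to a neighbourhood, which on each clopen slice $X_j\times Y$ can be homotoped/restricted to a map factoring (up to the extension) through $Y$; the same reassembly over the finite clopen cover of $X$, together with Lemma~\ref{zero-Dowker} and the inductive hypothesis, yields the $L$-partition of controlled dimension. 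The main obstacle throughout is the hard direction of (a): making precise the sense in which an arbitrary closed $K$-tuple of $X\times Y$ can be ``straightened'' along the $0$-dimensional factor and ensuring the finitely many reassembled partition pieces glue to an honest $K$-partition — this is exactly where the compactness of $X$ and $\dim X=0$ (hence the clopen-cover refinement) do the real work.
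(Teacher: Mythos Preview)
Your overall architecture for part~(a) --- point-wise projection of the tuple to a fiber, tube-lemma neighbourhoods $V^x$, finite disjoint clopen refinement $\{W^s\}$, and reassembly of partitions $W^s\times P^{x_s}$ --- is exactly what the paper does, and your description of~(b) via the obstruction sets on each slice also matches. One minor point: your ``easy direction'' is overcomplicated. You propose pulling a $K$-tuple of $Y$ back to $X\times Y$, finding a partition there, and pushing it down; but pushing a partition forward along $\pi_Y$ does not obviously give a $K$-partition in~$Y$, and your justification invokes the inductive hypothesis in a way that is at best awkward. The paper simply observes that $\{x_0\}\times Y$ is a closed copy of~$Y$ inside $X\times Y$, so $\dInd{K} Y\le\dInd{K}(X\times Y)$ by monotonicity on closed subspaces --- no induction needed.

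There is, however, a genuine gap in your treatment of~(c). You write that the extension $g\colon U\to L$ of $f$, on each clopen slice $X_j\times Y$, ``can be homotoped/restricted to a map factoring (up to the extension) through~$Y$''. This is the crux, and it does not come for free. The fiber map $g^{x_j}(b)=g(x_j,b)$ has an $L$-partition $P^{x_j}$ in $Y$ and an extension $\psi^{x_j}\colon Y\setminus P^{x_j}\to L$, but $\psi^{x_j}\circ\pi_Y$ agrees with $g$ only on the single fiber $\{x_j\}\times Y$, not on $F\cap(X_j\times Y)$; and $\psi^{x_j}$ lives on $Y\setminus P^{x_j}$, where you have no control over homotopies of the original~$f$. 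The paper resolves this with a concrete device you do not mention: embed $L$ in the Hilbert cube $Q$, take a retraction $r\colon R\to L$ from a neighbourhood $R$ of $L$ in~$Q$, set $\varepsilon=\dist(L,Q\setminus R)$, form the \emph{difference} map $d^{x}(a,b)=g(a,b)-g(x,b)$ (coordinatewise in~$Q$), extend it to $e^{x}\colon X\times Y\to[-1,1]^{\aleph_0}$, and shrink the clopen pieces so that $e^{x_s}$ has norm $<\varepsilon$ on $W^s\times Y$. Then $\varphi(a,b)=r\bigl(\psi^{x_s}(b)+e^{x_s}(a,b)\bigr)$ is a well-defined extension of $f$ off the assembled partition. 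Without this (or an equivalent homotopy-extension argument made precise), your reassembly step in~(c) does not produce an extension of~$f$. Lemma~\ref{zero-Dowker}, which you invoke, plays no role here.
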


\begin{proof}
(a) Evidently $\dInd{K} (X\times Y)\geq\dInd{K} Y$. We prove ``$\leq$'' by induction on $\alpha=\dInd{K} Y$. If $\alpha=-1$, we are
done. Assume that $\alpha\geq 0$. Write $\pi_X\colon X\times Y\to X$ and $\pi_Y\colon X\times Y\to Y$ for the projections. Take a
closed $K$-tuple $\mathcal{F}$ of $X\times Y$. For any point $x\in X$ consider the sets $\pi_Y(F_i\cap\pi_X^{-1}(x))\subset Y$,
$i=0,\ldots,k$. For this closed $K$-tuple of $Y$, take a $K$-neigh\-bour\-hood $\mathcal{U}^{\,x}$ whose corresponding
$K$-partition~$P^{\,x}$ has $\dInd{K} P^{\,x}< \alpha$. Since $Y$~is compact, $\pi_X$ is a closed map and the image $\pi_X
(F_i\setminus (X\times U_i^x))\not\ni x$ is a closed subset of $X$ for each~$i$. Hence, there is a neighbourhood $V^x\ni x$ such that
$F_i\cap\pi_X^{-1}(V^x)\subset X \times U^x_i$ for each $i$. Take a finite clopen refinement $\{W^s\colon s\in S\}$ of $\{V^x\colon
x\in X\}$ consisting of disjoint sets. For each~$s$ fix a point~$x_s$ with $W^s \subset V^{x_s}$. We have
$F_i\cap\pi_X^{-1}(W^s)\subset W^s \times U^{x_s}_i$ for each $i$ and $s$. The sets $U_i=\bigcup_{s\in S} W^s\times U_i^{x_s}$,
$i=0,\ldots,k$, form a $K$-neighbourhood $\mathcal{U}$ of~$\mathcal{F}$. Note the fact, which will be needed in a while, that
\begin{lister}
\item[$(\ast)$\hfill] \em if $\dobs{K}\mathcal{U}^{\,x_s}=\emptyset$ for each $s\in S$, then $\dobs{K}\mathcal{U}=\emptyset$.
\end{lister}
By the obvious induction hypothesis, $\dInd{K} (W^s\times P^{\,x_s})<\alpha$ for each~$s$. Finally, $P=(X\times
Y)\setminus\bigcup\mathcal{U}=\bigcup_{s\in S} (W^s\times P^{\,x_s})$ is a $K$-partition for $\mathcal{F}$, and $\dInd{K} P<\alpha$.
We have shown that $\dInd{K}(X\times Y)\leq\alpha=\dInd{K} Y$.

(b) In view of Proposition \ref{subspace}, we infer that if $\dstr{K}(X\times Y)=0$, then $\dstr{K} Y=0$. The converse becomes
justified when analysing the proof in the previous para\-graph, we moreover consider the implication $(\ast)$.

(c) Again $\dInd{L} (X\times Y)\geq\dInd{L} Y$. Write $\alpha=\dInd{L} Y$. If $\alpha=-1$, the equality (c) holds. Assume that $\alpha
\geq 0$. Consider the Hilbert cube $Q=[-1,2]^{\aleph_0}$ equipped with the metric
$\varrho((s_i)_{i=0}^\infty,(t_i)_{i=0}^\infty)=\sum_{i=0}^\infty 2^{-i}|s_i-t_i|$, and assume that $L\subset [0,1]^{\aleph_0}$. There
exists a neighbourhood $R\subset Q$ of $L$ with a map $r\colon R\to L$ such that $r(t)=t$ for $t\in L$. Let
$\varepsilon=\inf\{\varrho(s,t)\,$:\ $s\in L$, $t\in Q\setminus R\}$. Take an arbitrary closed set $F\subset X\times Y$ with a map
$f\colon F\to L$. Since $L$ is an ANR, there exists an open neighbourhood $U$ of $F$ with an extension $g\colon \cl U\to L$ of~$f$.
For each point $x\in X$, consider the open set $U^x=\pi_Y(U\cap\pi_X^{-1}(x))$, the closed set $G^x=\pi_Y(\cl U\cap\pi_X^{-1}(x))$,
and the map $g^x\colon G^x\to L$, $g^x(b)=g(x,b)$ for $b\in G^x$. In~$Y$ there is an $L$-partition $P^x$ for $g^x$ with $\dInd{L}
P^x<\alpha$ and with an extension $\psi^x\colon Y\setminus P^x\to L$ of~$g^x$. As $\pi_X$ is a closed map, $\pi_X(F\setminus (X\times
U^x))\not\ni x$ is closed in~$X$. There is a neighbourhood $N^x$ of $x$ with  $F\cap\pi_X^{-1}(\cl N^x)\subset X\times U^x$. Writing
as usually $(s_i)_{i=0}^\infty\pm (t_i)_{i=0}^\infty=(s_i\pm t_i)_{i=0}^\infty$, we set
\begin{align*}
d^x\colon \pi_X^{-1}(x)\cup (F\cap\pi_X^{-1}(\cl N^x)) & \to [-1,1]^{\aleph_0},\\
d^x(a,b) & = \left\{
         \begin{array}{cl}
         g(a,b)-g(x,b)& \mbox{for
         }(a,b)\in F\cap\pi_X^{-1}(\cl N^x),\\
         0 & \mbox{for } a= x, b\in Y.
         \end{array}
          \right.
\end{align*}
The function $d^x$ is correctly defined and continuous. Let $e^x\colon X\times Y\to [-1,1]^{\aleph_0}$ be an extension of $d^x$.
Consider the point $o=(0,0,\ldots)\in Q$, the open ball B$(o,\varepsilon)$, and the closed set $\pi_X[(X\times
Y)\setminus(e^x)^{-1}(\mathrm{B}(o,\varepsilon))]\not\ni x$. There is a neigh\-bour\-hood $V^x\subset N^x$ of~$x$ with
$\pi_X^{-1}(V^x)\subset (e^x)^{-1}(\mathrm{B}(o,\varepsilon))$. Again, we take a finite clopen refinement $\{W^s\colon s\in S\}$ of
$\{V^x\colon x\in X\}$, where the sets $W^s$ are pairwise disjoint. We fix points $x_s$ with $W^s\subset V^{x_s}$, and we obtain
$F\cap\pi_X^{-1}(W^s)\subset W^s\times U^{x_s}$. By the obvious induction hypothesis, $\dInd{L} (W^s\times P^{\,x_s})<\alpha$ for
each~$s$, and $\dInd{L} P<\alpha$ for $P=\bigcup_{s\in S} (W^s\times P^{\,x_s})$. There remains to observe that the map
\begin{align*}
\varphi\colon (X\times Y)\setminus P & \to L,\\
\varphi(a,b) & =r(\psi^{x_s}(b)+e^{x_s}(a,b)) \mbox{ \ \ \ for $a\in W_s$ and $b\in Y\setminus P^{x_s}$}
\end{align*}
is correctly defined and extends $f$. Indeed, $\psi^{x_s}(b)+e^{x_s}(a,b)\in R$ since $\psi^{x_s}(b)\in L\subset [0,1]^{\aleph_0}$ and
$e^{x_s}(a,b)\in [-1,1]^{\aleph_0}\cap\mathrm{B}(o,\varepsilon)$. If $(a,b)\in F\cap\pi_X^{-1}(W^s)$, then $b\in U^{x_s}\subset
G^{x_s}$, $\psi^{x_s}(b)+e^{x_s}(a,b)=g(x_s,b)+d^{x_s}(a,b)=g(a,b)\in L$ and $\varphi (a,b)=g(a,b)=f(a,b)$. Therefore, $P$ is an
$L$-partition for~$f$, and $\dInd{L}(X\times Y)\leq\alpha$.
\end{proof}

The foregoing proof also works in the case when $X$ is paracompact and $\dInd{K} Y$, $\dInd{L} Y$ are integers (we need a compact $Y$
and $\dim X=0$, of course).

\section{Spreading out compact spaces in a plank}\label{plank}

Any suitably chosen subspace of a product or a product itself is sometimes called a plank. We shall additionally compress one of the
product's faces into one of the factors.

{\em Suppose that\/ $X$ and\/ $Y$ are non-empty compact spaces.} We shall recall the definition of the space $Z(X,Y)$, and investigate
its properties (cf.~\cite{krz-10a}). To begin, write $\mathcal{S}_X$ for the family of all subsets of~$X$ that are either finite (so
$\emptyset\in\mathcal{S}_X$), or homeomorphic to $A_{\aleph_0}$. Let
$\mathfrak{m}\geq\max\{\aleph_0,(wX)^+,(wY)^+,\card\mathcal{S}_X\}$, where $wX$ and $wY$ denote the weights of $X$ and $Y\!$, and put
$M=A_\mathfrak{m}\times X\times Y$. Let $\pi_1\colon\! M\to N$ be the quotient map that compresses sets $\{(\mu,x,y)\in M\colon y\in
Y\}$ for all $x\in X$\linebreak into points---here $N$ is the compact quotient space.

Given any function $\varphi\colon A_\mathfrak{m}\setminus\{\mu\}\to\mathcal{S}_X$ such that $\card\varphi^{-1}(S)=\mathfrak{m}$ for
every $S\in\mathcal{S}_X$, we put
\begin{align*}
H(\alpha) & = \left\{
         \begin{array}{ll}
         \pi_1(\{\mu\}\times X\times Y) & \mbox{ for }\alpha=\mu,\\
         \pi_1(\{\alpha\}\times\varphi(\alpha)\times Y) & \mbox{ for
         }\alpha\neq\mu,\mbox{ and}
         \end{array}
          \right.\\
Z(X,Y) & = {\bigcup}_{\alpha\in A_\mathfrak{m}} H(\alpha).
\end{align*}
(We slightly change the notation originating in \cite{krz-10a}.)

\begin{prop}[{\cite[Section 1]{krz-10a}}]\label{general-zxy}
$Z(X,Y)$ is a compact space. Every component of $Z(X,Y)$ is homeomorphic to some component of $X$ or $Y$. If $X$ and $Y$ are Fr\'echet
spaces, then so is $Z(X,Y)$.\qed
\end{prop}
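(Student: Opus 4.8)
The statement is established in \cite[Section~1]{krz-10a}; I sketch the plan. The key observation is that for $\alpha\neq\mu$ the singleton $\{\alpha\}$ is open in $A_\mathfrak{m}$, so $\pi_1$ makes no identification on $\{\alpha\}\times X\times Y$; hence $\pi_1(\{\alpha\}\times X\times Y)$ is a clopen subset of the compact space~$N$ homeomorphic to $X\times Y$, whereas $H(\mu)=\pi_1(\{\mu\}\times X\times Y)$ is closed in~$N$ and homeomorphic to~$X$, the fibres of $\pi_1$ over $H(\mu)$ being exactly the fibres of the projection $\{\mu\}\times X\times Y\to X$.

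\emph{Compactness.} I would show that $Z(X,Y)$ is closed in~$N$. Its complement is $\bigcup_{\alpha\neq\mu}\pi_1\bigl(\{\alpha\}\times(X\setminus\varphi(\alpha))\times Y\bigr)$, and each term is open in~$N$: it lies inside the clopen copy $\pi_1(\{\alpha\}\times X\times Y)\cong X\times Y$, in which it equals $(X\setminus\varphi(\alpha))\times Y$, an open set because every $\varphi(\alpha)\in\mathcal{S}_X$ is compact, hence closed in~$X$. As $N$ is compact, so is $Z(X,Y)$.

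\emph{Components.} Write $Z(X,Y)=H(\mu)\cup\bigcup_{\alpha\neq\mu}H(\alpha)$, where $H(\alpha)=Z(X,Y)\cap\pi_1(\{\alpha\}\times X\times Y)$ is clopen in $Z(X,Y)$ and homeomorphic to $\varphi(\alpha)\times Y$ for $\alpha\neq\mu$. Any connected subset of $Z(X,Y)$ therefore lies in one of the sets $H(\mu)$ or $H(\alpha)$ ($\alpha\neq\mu$), so every component of $Z(X,Y)$ is a component of $H(\mu)\cong X$, or of some $H(\alpha)\cong\varphi(\alpha)\times Y$. In the latter case $\varphi(\alpha)$ is finite discrete or homeomorphic to $A_{\aleph_0}$, hence totally disconnected, so the components of $\varphi(\alpha)\times Y$ are the sets $\{x\}\times Q$ with $Q$ a component of~$Y$. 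Thus every component of $Z(X,Y)$ is homeomorphic to a component of $X$ or of~$Y$.

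\emph{Fr\'echet property.} Given $A\subseteq Z(X,Y)$ and $p\in\cl A$, I would find a sequence in $A$ converging to~$p$, by cases. If $p\in H(\alpha)$ with $\alpha\neq\mu$, then, $H(\alpha)$ being clopen, $p\in\cl(A\cap H(\alpha))$ inside $H(\alpha)\cong\varphi(\alpha)\times Y$, and the claim reduces to the fact that the product of the compact metrizable space $\varphi(\alpha)$---a finite space or a convergent sequence---with the compact Fr\'echet space~$Y$ is Fr\'echet. If $p\in H(\mu)\cong X$, say $p\leftrightarrow x_0$, I may assume $A\subseteq\bigcup_{\alpha\neq\mu}H(\alpha)$ (otherwise $p\in\cl(A\cap H(\mu))$ since $H(\mu)$ is closed, and the Fr\'echet property of~$X$ applies). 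Writing $R_\alpha\subseteq\varphi(\alpha)$ for the set of $X$-coordinates of the points of $A\cap H(\alpha)$, the relation $p\in\cl A$ translates, through the description of the basic neighbourhoods of~$p$ (preimages of saturated boxes $W\times V\times Y$ with $W\ni\mu$ cofinite and $V\ni x_0$ open), into: $\{\alpha:R_\alpha\cap V\neq\emptyset\}$ is infinite for every open $V\ni x_0$. What remains---and this is the main obstacle---is to extract from this \emph{distinct} indices $\alpha_1,\alpha_2,\dots$ and points $r_l\in R_{\alpha_l}$ with $r_l\to x_0$ in~$X$; then $\pi_1(\alpha_l,r_l,d_l)\to p$ for any $d_l\in Y$, since $\alpha_l\to\mu$, $r_l\to x_0$, and the $Y$-coordinate is irrelevant on the $\mu$-slice. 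Applying the Fr\'echet property of~$X$ to $\bigcup_\alpha R_\alpha$ gives a sequence $r_l\to x_0$; if it meets infinitely many $R_\alpha$ one reindexes and is done, but it may instead stay inside $R_{\beta_1}\cup\dots\cup R_{\beta_p}$ for finitely many $\beta_q$, forcing $x_0$ into one $\varphi(\beta_q)$. Re-routing the sequence through infinitely many slices in that case is the delicate step, and it is here that one genuinely uses that $X$ is \emph{compact} (not merely Fr\'echet) and that $\varphi$ realises each member of $\mathcal{S}_X$ exactly $\mathfrak{m}$ times; the bookkeeping can be modelled on the transparent special case $X=A_\mathfrak{m}$, where ``$r_l\to x_0$'' just means the $r_l$ are eventually distinct and suitable distinct $\alpha_l$, $r_l$ are chosen greedily.
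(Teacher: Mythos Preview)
The paper does not supply its own proof of this proposition: it is stated with a terminal \qed and the reference \cite[Section~1]{krz-10a}, so there is nothing here to compare your argument against directly.

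Your treatments of compactness and of components are correct and complete. For the components, note that the reasoning ``any connected subset lies in one $H(\alpha)$ or in $H(\mu)$'' is valid precisely because each $H(\alpha)$ with $\alpha\neq\mu$ is clopen (not because $H(\mu)$ is open---it is not), and you use this correctly.

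The Fr\'echet argument, however, is left genuinely incomplete, and you say so yourself. The difficulty you identify is real: from $x_0\in\cl\bigl(\bigcup_{\alpha\notin F}R_\alpha\bigr)$ for every finite~$F$ one wants distinct $\alpha_l$ and $r_l\in R_{\alpha_l}$ with $r_l\to x_0$, and a single application of the Fr\'echet property of~$X$ does not deliver this. Indeed, for the (non-compact) sequential fan $S_\omega$ with spines $B_n$, one has $x_0\in\cl\bigl(\bigcup_{n\notin F}B_n\bigr)$ for every finite~$F$, yet no diagonal selection $r_l\in B_{n_l}$ with distinct $n_l$ converges to~$x_0$; so mere Fr\'echetness of~$X$ is insufficient, confirming your instinct that compactness must enter. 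On the other hand, your suggestion that the hypothesis $\card\varphi^{-1}(S)=\mathfrak{m}$ plays a role here is almost certainly a red herring: that hypothesis is used elsewhere in the paper (Lemmas~\ref{Ind-raise1} and~\ref{big-partition2}) to locate prescribed slices, but the Fr\'echet property of $Z(X,Y)$ should depend only on the shape of the slices $H(\alpha)\cong\varphi(\alpha)\times Y$ and of $H(\mu)\cong X$, not on how often each $S\in\mathcal{S}_X$ is realised. What your sketch is missing is a concrete mechanism---using compactness of~$X$, or the very restricted form $R_\alpha\subset\varphi(\alpha)\in\mathcal{S}_X$---for re-routing the Fr\'echet sequence through infinitely many slices; until that step is written out, the argument remains a plan rather than a proof, and you should consult \cite{krz-10a} for the details.
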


The following results from Theorem \ref{compon1}.

\begin{lemma}\label{ddim}
$\ddim{L} Z(X,Y)=\max\{\ddim{L} X,\ddim{L} Y\}$.\qed
\end{lemma}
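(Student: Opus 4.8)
The plan is to read the lemma off Theorem \ref{compon1} by matching up the components of $Z(X,Y)$ with those of $X$ and $Y$, and to secure the lower bound with two explicit closed copies of $X$ and $Y$ inside $Z(X,Y)$.

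First I would invoke compactness: $Z(X,Y)$ is compact by Proposition \ref{general-zxy}, so Theorem \ref{compon1} applies and gives
$$\ddim{L} Z(X,Y)=\sup\{\ddim{L} P\colon P\mbox{ is a component of }Z(X,Y)\}.$$
By Proposition \ref{general-zxy} every component $P$ of $Z(X,Y)$ is homeomorphic to a component of $X$ or of $Y$; hence the supremum above is at most the larger of $\sup\{\ddim{L} P\colon P\mbox{ component of }X\}$ and $\sup\{\ddim{L} P\colon P\mbox{ component of }Y\}$, which is $\max\{\ddim{L} X,\ddim{L} Y\}$ by two further applications of Theorem \ref{compon1}, now to $X$ and to $Y$. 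This is the inequality ``$\leq$''.

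For ``$\geq$'' I would use that $\ddim{L}$ is monotone under passage to closed subspaces — immediate from statement (b') of the definition, since a closed subset of a closed subspace of $X$ is closed in $X$ — and then point to closed subspaces of $Z(X,Y)$ homeomorphic to $X$ and to $Y$. For $X$: the set $\{\mu\}\times X\times Y$ is closed in $M=A_\mathfrak{m}\times X\times Y$, and $\pi_1$ carries it onto $H(\mu)$ by collapsing each $\{\mu\}\times\{x\}\times Y$ to a point; as $Y$ is compact and $\pi_1$ is a closed quotient onto the Hausdorff space $N$, this exhibits $H(\mu)$ as a continuous bijective — hence homeomorphic — image of $X$, and $H(\mu)=\pi_1(\{\mu\}\times X\times Y)$ is closed in $N$, so closed in $Z(X,Y)$. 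For $Y$: since $X\neq\emptyset$, fix $x_0\in X$; as $\{x_0\}\in\mathcal{S}_X$ and $\card\varphi^{-1}(\{x_0\})=\mathfrak{m}$, there is an index $\alpha_0\neq\mu$ with $\varphi(\alpha_0)=\{x_0\}$. Because $\{\alpha_0\}$ is closed in $A_\mathfrak{m}$ and $\pi_1$ is injective off the slice over $\mu$, the set $\pi_1(\{\alpha_0\}\times\{x_0\}\times Y)\subset H(\alpha_0)$ is a closed subspace of $Z(X,Y)$ homeomorphic to $Y$. Monotonicity then yields $\ddim{L} Z(X,Y)\geq\ddim{L} X$ and $\ddim{L} Z(X,Y)\geq\ddim{L} Y$, hence ``$\geq$''.

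The only mildly delicate point is the topological bookkeeping showing that $H(\mu)$ and $\pi_1(\{\alpha_0\}\times\{x_0\}\times Y)$ are genuinely closed in $Z(X,Y)$ and inherit the expected homeomorphism type; since all the spaces in sight are compact Hausdorff and $\pi_1$ is a closed quotient map, this reduces to the elementary observations above, and the relevant facts about the structure of $Z(X,Y)$ are already contained in \cite{krz-10a}. Alternatively, if the converse component correspondence (every component of $X$ and of $Y$ occurs as a component of $Z(X,Y)$) is extracted from \cite{krz-10a}, the inequality ``$\geq$'' follows directly from Theorem \ref{compon1} without invoking monotonicity. Everything else is a routine invocation of Theorems \ref{compon1} and Proposition \ref{general-zxy}.
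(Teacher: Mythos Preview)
Your proposal is correct and matches the paper's approach: the paper simply writes ``The following results from Theorem \ref{compon1}'' before stating the lemma with a \qed, leaving the details implicit. Your write-up fills in exactly those details---the ``$\leq$'' via Proposition \ref{general-zxy} and Theorem \ref{compon1}, and the ``$\geq$'' via the closed copies $H(\mu)\cong X$ and $H(\alpha_0)\cong Y$ (facts the paper records explicitly just after Lemma \ref{ddim})---so the two arguments coincide.
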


Write $\pi_X\colon Z(X,Y)\to X$ and  $\pi_{A_\mathfrak{m}}\colon Z(X,Y)\to A_\mathfrak{m}$ for projections, i.e.\ the unique maps such
that $\pi_X(\pi_1 (\alpha,x,y))=x$ and $\pi_{A_\mathfrak{m}}(\pi_1 (\alpha,x,y))=\alpha$ for every
$(\alpha,x,y)\in\pi_1^{-1}(Z(X,Y))$. Note that $\pi_{A_\mathfrak{m}}^{-1}(\alpha)=H(\alpha)$ for $\alpha\in A_\mathfrak{m}$, the
restriction $\pi_X|H(\mu)$ is a homeomorphism onto $X$, and $H(\alpha)$ is homeo\-morph\-ic to $\varphi(\alpha)\times Y$ for every
$\alpha\neq\mu$. A base of neighbourhoods of a point $\pi_1(\mu,x,y)\in H(\mu)$ consists of sets of the form
$\pi_{A_\mathfrak{m}}^{-1}(A)\cap\pi_X^{-1}(U)$, where $\mu\in A\subset A_\mathfrak{m}$, the complement $A_\mathfrak{m}\setminus A$ is
finite, and $U\subset X$ is a neighbourhood of $x$.

The space $Z(X,Y)$ depends on the choice of $\mathfrak{m}$, but this is insignificant in the present paper. The dependence
on~$\varphi$ is superficial because another function $\psi$ with $\card\psi^{-1}(S)=\mathfrak{m}$ for $S\in\mathcal{S}_X$ would yield
a new space homeomorphic to the former $Z(X,Y)$. Indeed, there would be a function $\xi\colon A_\mathfrak{m}\setminus\{\mu\}\to
A_\mathfrak{m}\setminus\{\mu\}$ such that $\varphi=\psi\circ\xi$. The homeomorphism in question would have fixed points of the form
$\pi_1(\mu,x,y)$, and would carry
$$H(\alpha)\ni\pi_1(\alpha,x,y)\;\longmapsto\;\pi_1(\xi(\alpha),x,y)\in \pi_1(\{\xi(\alpha)\}\times\psi(\xi(\alpha))\times Y)$$
for every $\alpha\neq\mu$. In particular, when $\mu\in A\subset A_\mathfrak{m}$ and $\card (A_\mathfrak{m}\setminus A)<\mathfrak{m}$,
we can think that---roughly speaking---$\pi_{A_\mathfrak{m}}^{-1}(A)$ has the same properties as $Z(X,Y)$. On the other hand, given a
non-empty closed set $F\subset X$, we can consider the~function $\chi\colon A_\mathfrak{m}\setminus\{\mu\}\to\mathcal{S}_F$,
$\chi(\alpha)=F\cap\varphi (\alpha)$, and it turns out that $\pi_X^{-1}(F)$ has the form of a $Z(F,Y)\subset\pi_1(A_\mathfrak{m}\times
F\times Y)$.

The following statement is a simple modification (with the same proof) of~\cite[Lemma 1]{krz-10a}.

\begin{lemma}\label{new-projection}
If\/ $G\subset Z(X,Y)$ is a\/ $G_\delta$-set {\em (\hspace{-.15ex}}so, also if\/ $G$ is open\/{\em ),} then there is a set\/
$A\subset A_\mathfrak{m}$ such that $\mu\in A$, $\card (A_\mathfrak{m}\setminus A)\leq \max\{wX,\aleph_0\}$, and\\
\parbox{13cm}{$$\pi_{A_\mathfrak{m}}^{-1}(A)\cap \pi_X^{-1}(\pi_X(G\cap H(\mu)))\subset G.$$}\hfill $\Box$
\end{lemma}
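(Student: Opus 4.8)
The plan is to run exactly the argument of \cite[Lemma 1]{krz-10a}, the only new ingredient being that a $G_\delta$ set is absorbed by writing it as a countable intersection of open sets at the outset. So first I would express $G=\bigcap_{n\in\mathbb{N}}U_n$ with each $U_n$ open in $Z(X,Y)$; this is possible because $G$ is $G_\delta$, and trivially (take $U_n=G$) when $G$ is merely open. I would also fix once and for all a base $\mathcal{B}$ of $X$ with $\card\mathcal{B}\leq\max\{wX,\aleph_0\}$, and write $D=\pi_X(G\cap H(\mu))$. Since $\pi_X|H(\mu)$ is a homeomorphism onto $X$, each $x\in D$ is the $\pi_X$-image of a unique point $p_x\in G\cap H(\mu)$, and $p_x\in U_n$ for every $n$.

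Next I would exploit the neighbourhood base of $p_x$ recalled above: the sets $\pi_{A_\mathfrak{m}}^{-1}(A')\cap\pi_X^{-1}(V)$ with $\mu\in A'\subseteq A_\mathfrak{m}$, $A_\mathfrak{m}\setminus A'$ finite, and $V$ a neighbourhood of $x$ in $X$. Hence, for every $n\in\mathbb{N}$ and every $x\in D$, I can choose a finite set $B_n^x\subseteq A_\mathfrak{m}\setminus\{\mu\}$ and a basic open set $W_n^x\in\mathcal{B}$ with $x\in W_n^x$ such that
$$\pi_{A_\mathfrak{m}}^{-1}(A_\mathfrak{m}\setminus B_n^x)\cap\pi_X^{-1}(W_n^x)\subseteq U_n .$$
The essential step, and really the only point that needs care, is the cardinality bookkeeping: $D$ may be much larger than $\max\{wX,\aleph_0\}$, so the naive choice $B=\bigcup_{n,x}B_n^x$ would be too big. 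Instead, for every pair $(n,W)\in\mathbb{N}\times\mathcal{B}$ for which there is some $x\in D$ with $W_n^x=W$, I would fix one such point $x_n^W$ and retain only $C_n^W:=B_n^{x_n^W}$. Then $B:=\bigcup\{C_n^W:(n,W)\in\mathbb{N}\times\mathcal{B},\ W=W_n^x\text{ for some }x\in D\}$ is a union of at most $\aleph_0\cdot\card\mathcal{B}\leq\max\{wX,\aleph_0\}$ finite sets, so $\card B\leq\max\{wX,\aleph_0\}$, while $\mu\notin B$. I then put $A:=A_\mathfrak{m}\setminus B$, so $\mu\in A$ and $\card(A_\mathfrak{m}\setminus A)\leq\max\{wX,\aleph_0\}$, as required.

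Finally I would verify $\pi_{A_\mathfrak{m}}^{-1}(A)\cap\pi_X^{-1}(D)\subseteq G$. Let $z$ lie in the left-hand side, with $\pi_{A_\mathfrak{m}}(z)\in A$ and $\pi_X(z)=x\in D$, and fix $n\in\mathbb{N}$. Put $W:=W_n^x$; since $x$ itself witnesses $W_n^x=W$, the representative $x_n^W$ exists, $C_n^W\subseteq B$, and $x\in W_n^x=W=W_n^{x_n^W}$, so $z\in\pi_X^{-1}(W_n^{x_n^W})$. Moreover $\pi_{A_\mathfrak{m}}(z)\in A=A_\mathfrak{m}\setminus B\subseteq A_\mathfrak{m}\setminus C_n^W=A_\mathfrak{m}\setminus B_n^{x_n^W}$, so $z\in\pi_{A_\mathfrak{m}}^{-1}(A_\mathfrak{m}\setminus B_n^{x_n^W})$; applying the displayed inclusion with $x_n^W$ in place of $x$ gives $z\in U_n$. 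As $n$ was arbitrary, $z\in\bigcap_n U_n=G$, which completes the proof. I expect no serious obstacle beyond keeping the index set $\mathbb{N}\times\mathcal{B}$ (rather than $\mathbb{N}\times D$) in charge of the discarded coordinates.
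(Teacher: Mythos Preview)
Your proof is correct and follows precisely the approach the paper intends: the lemma is stated with a $\Box$ and only the remark that it is ``a simple modification (with the same proof)'' of \cite[Lemma~1]{krz-10a}, and you have carried out exactly that argument, with the countable intersection $G=\bigcap_n U_n$ supplying the extra $\aleph_0$ factor and the re-indexing by $\mathbb{N}\times\mathcal{B}$ keeping the discarded set of size at most $\max\{wX,\aleph_0\}$.
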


\section{Compact spaces with $\ddim{L}<\dInd{L}$, where $L$ is an ANR}\label{ANR-section} \label{fourth}

We go on to investigate the behaviour of $\dInd{L}$ under the operation $Z(X,Y)$.

\begin{lemma}\label{ANR-leq}
$\dInd{L} Z(X,Y)\leq\max\{\dInd{L} X +1,\dInd{L} Y\}$.
\end{lemma}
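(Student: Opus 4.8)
The plan is to prove $\dInd{L} Z(X,Y)\leq\max\{\dInd{L} X +1,\dInd{L} Y\}$ by combining the structure of $Z(X,Y)$ with Lemma \ref{zero-Dowker}. Set $\alpha=\dInd{L} X$ and $\beta=\dInd{L} Y$, and put $\gamma=\max\{\alpha+1,\beta\}$; we may assume $X,Y$ are non-empty, so $\gamma\geq 0$. The key observation is that the ``bad'' part of $Z(X,Y)$ is $H(\mu)$, which is homeomorphic to $X$, while everything sitting off $H(\mu)$ is built out of copies of $\varphi(\alpha)\times Y$ with $\varphi(\alpha)$ a compact metric space of dimension $0$ (each $\varphi(\alpha)\in\mathcal{S}_X$ is finite or homeomorphic to $A_{\aleph_0}$), so by Theorem \ref{zero-product}(c) these pieces have $\dInd{L}\leq\beta$.

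First I would fix any closed set $G\subset Z(X,Y)$ and a map $f\colon G\to L$, and produce an $L$-partition $P$ with $\dInd{L} P<\gamma$. The idea is to handle $H(\mu)\cong X$ first: the restriction of $f$ to $G\cap H(\mu)$, transported to $X$, has (since $\dInd{L} X=\alpha$) an $L$-partition in $H(\mu)$ whose kernel $P_\mu$ satisfies $\dInd{L} P_\mu<\alpha$, together with an extension of $f|{(G\cap H(\mu))}$ over $H(\mu)\setminus P_\mu$. Because $L$ is an ANR, this extension spreads to an extension $g\colon U\to L$ on some open neighbourhood $U$ of $(G\cap H(\mu))\cup\big(H(\mu)\setminus P_\mu\big)$ in $Z(X,Y)$; note $H(\mu)\setminus P_\mu\subset U$. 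Now apply Lemma \ref{new-projection} to the open set $U$: there is $A\subset A_\mathfrak{m}$ with $\mu\in A$, $\card(A_\mathfrak{m}\setminus A)\leq\max\{wX,\aleph_0\}<\mathfrak{m}$, and $\pi_{A_\mathfrak{m}}^{-1}(A)\cap\pi_X^{-1}(\pi_X(U\cap H(\mu)))\subset U$. Since $H(\mu)\setminus P_\mu\subset U\cap H(\mu)$, the clopen-in-$Z(X,Y)$ (actually $G_\delta$, open after shrinking) set $V=\pi_{A_\mathfrak{m}}^{-1}(A)\cap\pi_X^{-1}\big(\pi_X(U\cap H(\mu))\big)$ is an open neighbourhood of $H(\mu)\setminus P_\mu$ contained in $U$, and we may choose an open $V'$ with $H(\mu)\setminus P_\mu\subset V'\subset\cl V'\subset V$.

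Next I would argue that $\dInd{L}\big(Z(X,Y)\setminus V'\big)\leq\beta$. The crucial point: $Z(X,Y)\setminus V'$ meets $H(\mu)$ only in the closed set $P_\mu$, which has $\dInd{L} P_\mu<\alpha\leq\gamma-1$, and off $H(\mu)$ it is a closed subspace of $\bigcup_{\alpha\in A_\mathfrak{m}\setminus\{\mu\}}H(\alpha)$. One should observe that the closure of $\pi_{A_\mathfrak{m}}^{-1}(A_\mathfrak{m}\setminus\{\mu\})$ inside $Z(X,Y)$ adds only points of $H(\mu)$, so writing $Z(X,Y)\setminus V'$ as the union of the closed set $C_0=(Z(X,Y)\setminus V')\cap H(\mu)\subset P_\mu$ and the closed set $C_1=(Z(X,Y)\setminus V')\cap\cl\big(\pi_{A_\mathfrak{m}}^{-1}(A_\mathfrak{m}\setminus\{\mu\})\big)$, I would estimate each piece and then invoke the sum-type behaviour of $\dInd{L}$ (for a normal space covered by two closed sets, the dimension is the max of the two, which follows for $\dInd{L}$ exactly as for $\trInd$). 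For $C_1$: since $\card(A_\mathfrak{m}\setminus A)<\mathfrak{m}$, the ``tail'' $\pi_{A_\mathfrak{m}}^{-1}(A\setminus\{\mu\})$ is, as remarked after Lemma \ref{new-projection}, again of the form of a $Z$-construction over $X$ (or a closed subspace thereof), and $C_1$ is a closed subspace of a countable union of copies $\varphi(\alpha)\times Y$; each has $\dInd{L}(\varphi(\alpha)\times Y)=\dInd{L} Y=\beta$ by Theorem \ref{zero-product}(c), since $\dim\varphi(\alpha)=0$. A closed subspace of such a space has $\dInd{L}\leq\beta$ (monotonicity of $\dInd{L}$ on closed subspaces, plus the countable closed sum theorem for $\dInd{L}$). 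Hence $\dInd{L}(Z(X,Y)\setminus V')\leq\max\{\alpha,\beta\}\leq\gamma$; and on the relevant closed pieces disjoint from a suitable ``core'' we in fact want the bound $<\gamma$ — here one must be slightly careful and instead apply Lemma \ref{zero-Dowker} directly.

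Concretely, I would finish via Lemma \ref{zero-Dowker} with $F=H(\mu)\setminus V'$... no — rather, the cleanest route is: the map $f$ extends over $\cl V'$ (by $g|\cl V'$, since $\cl V'\subset V\subset U=\dom g$), and on the complement $Z(X,Y)\setminus V'$ we need an $L$-partition for $g|\bd V'$ with kernel of $\dInd{L}<\gamma$. Since $\dInd{L}(Z(X,Y)\setminus V')\leq\beta\leq\gamma$ when $\alpha<\beta$ — wait, we need strict inequality. The honest fix: show $\dInd{L}(Z(X,Y)\setminus V')\leq\gamma-1$. Off $H(\mu)$ we only get $\leq\beta$, which is $\leq\gamma$ but possibly equal to $\gamma$. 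So the inductive bookkeeping must be: partition \emph{inside} $Z(X,Y)\setminus V'$. There, the closed set $C_0\subset P_\mu$ has $\dInd{L}<\alpha$, and every closed set disjoint from $C_0$ lies in the ``$Y$-like'' part, where we can recursively find $L$-partitions with kernels of $\dInd{L}<\beta\leq\gamma$ — but that recursion is exactly a second application of the same lemma/argument to the smaller $Z$-space. I expect the main obstacle to be precisely this inductive organisation: one should set up the induction on $\gamma=\max\{\dInd{L} X+1,\dInd{L} Y\}$ (or a double induction on $(\alpha,\beta)$), using at the base step that $\dInd{L}(\varphi(\alpha)\times Y)=\dInd{L} Y$ via Theorem \ref{zero-product}(c), and using Lemma \ref{zero-Dowker} together with Lemma \ref{new-projection} at the inductive step to peel off a clopen neighbourhood of $H(\mu)\setminus P_\mu$ and reduce to a strictly smaller situation. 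The rest — the $G_\delta$/open interplay in Lemma \ref{new-projection}, the ANR extension step, the countable closed sum theorem for $\dInd{L}$ — is routine.
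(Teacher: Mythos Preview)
Your proposal has genuine gaps. The two sum-theorem appeals are unjustified: the claim that for a normal space covered by two closed sets $\dInd{L}$ equals the maximum of the two pieces is \emph{false} already for $\trInd$ on compact spaces (this is precisely the failure of the finite sum theorem for $\Ind$), and there is no ``countable closed sum theorem for $\dInd{L}$'' in the generality you invoke. Worse, your decomposition $E=C_0\cup C_1$ is vacuous: every point of $H(\mu)$ lies in $\cl\big(\pi_{A_{\mathfrak m}}^{-1}(A_{\mathfrak m}\setminus\{\mu\})\big)$, so $C_1=E$ and the split gains nothing. Finally, Lemma~\ref{new-projection} only gives $\card(A_{\mathfrak m}\setminus A)\le\max\{wX,\aleph_0\}$, which is typically uncountable, so the ``countable union of copies $\varphi(\alpha)\times Y$'' you want never materialises. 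The proposed induction on $\gamma$ does not rescue this: $Y$ is unchanged in the recursion, so $\beta$ does not drop.

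The paper's argument is direct and avoids all of this. It does \emph{not} pre-partition inside $H(\mu)$; instead, after extending $f$ to $g\colon U\to L$ via the ANR property, it chooses open $V_0\subset\cl V_0\subset V_1\subset\cl V_1$ in $X$ between $\pi_X(F\cap H(\mu))$ and $\pi_X(U\cap H(\mu))$, and then uses that $\pi_{A_{\mathfrak m}}$ is a \emph{closed} map (compactness) to obtain a clopen $A\ni\mu$ with $A_{\mathfrak m}\setminus A$ \emph{finite} and $F\cap\pi_{A_{\mathfrak m}}^{-1}(A)\subset\pi_X^{-1}(V_0)$, $\pi_X^{-1}(\cl V_1)\cap\pi_{A_{\mathfrak m}}^{-1}(A)\subset U$. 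Over each $\alpha\in A\setminus\{\mu\}$ one extends $g$ to all of $H(\alpha)$ using the clopen structure of $\varphi(\alpha)$; over the finitely many $\alpha\notin A$ one finds an $L$-partition $P_\alpha$ inside $H(\alpha)$ with $\dInd{L} P_\alpha<\dInd{L} Y$, using $\dInd{L} H(\alpha)=\dInd{L} Y$ from Theorem~\ref{zero-product}(c). The resulting $L$-partition for $f$ is the \emph{finite disjoint union of clopen pieces} $(H(\mu)\setminus\pi_X^{-1}(V_0))\cup\bigcup_{\alpha\notin A}P_\alpha$; its $\dInd{L}$ is the maximum of the pieces (trivially, since they are clopen in the union), hence $<\max\{\dInd{L} X+1,\dInd{L} Y\}$. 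The idea you are missing is to replace Lemma~\ref{new-projection} by the compactness argument that forces the exceptional index set to be finite --- this is what makes a sum theorem unnecessary.
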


\begin{proof}
Take a closed subset $F$ of $Z=Z(X,Y)$ and a map $f\colon F\to L$. Since $L$ is an ANR, there exists a neighbourhood $U$ of~$F$ with
an extension $g\colon U\to L$ of~$f$. The restriction $\pi_X|H(\mu)$ is a homeomorphism onto $X$, and hence, there are open subsets
$V_0,V_1$ of $X$ such that
$$\pi_X (F\cap H(\mu))\subset V_0\subset\cl V_0\subset V_1\subset\cl V_1\subset\pi_X(U\cap H(\mu)).$$
Observe that $F\setminus \pi_X^{-1}(V_0)$ and $\pi_X^{-1}(\cl V_1)\setminus U$ are closed subsets of $Z$, and none of them meets $H(\mu)$. Their images under $\pi_{A_\mathfrak{m}}$ do not contain
$\mu$, and being closed, are finite. Thus,
$$A=A_\mathfrak{m}\setminus [\pi_{A_\mathfrak{m}}(F\setminus \pi_X^{-1}(V_0))\cup\pi_{A_\mathfrak{m}}(\pi_X^{-1}(\cl V_1)\setminus U)]\ni\mu$$
is clopen in $A_\mathfrak{m}$. Moreover
$$F\cap\pi_{A_\mathfrak{m}}^{-1}(A)\subset\pi_X^{-1}(V_0) \mbox{ \ \ \ and \ \ \ } \pi_X^{-1}(\cl V_1)\cap\pi_{A_\mathfrak{m}}^{-1}(A)\subset U.$$

For each $S\in\mathcal{S}_X$, let $x^S\in S$ be the limit of $S$ whenever $S$ is infinite. Choose a point $l_0\in L$. For each
$\alpha\in A\setminus\{\mu\}$, we shall define an extension $g'_\alpha\colon H(\alpha)\to L$ of the restriction
$g|(\pi_X^{-1}(V_0)\cap H(\alpha))$. Consider $S=\varphi(\alpha)$. There are two cases. (1)~If $\varphi(\alpha)$ is finite or
$x^{\varphi(\alpha)}\in V_1$, then $V_1\cap S$ is clopen in $S$ and $W_\alpha=H(\alpha)\cap\pi_X^{-1}(V_1)$ is clopen in~$Z$. (2)~If
$x^{\varphi(\alpha)}\not\in V_1$, then $V_0\cap S$ is clopen in $S$, and we put $W_\alpha=H(\alpha)\cap\pi_X^{-1}(V_0)$. Since
$W_\alpha\subset U$ in both cases, we can set
$$g'_\alpha(z)  = \left\{
         \begin{array}{cl}
         g(z)& \mbox{for }z\in W_\alpha,\\
         l_0 & \mbox{for } z\in H(\alpha)\setminus W_\alpha.
         \end{array}
          \right.$$

$\dInd{L} H(\alpha)=\dInd{L} Y$ for $\alpha\neq\mu$ by Theorem \ref{zero-product}(c). If $\alpha\in A_\mathfrak{m}\setminus A$, then
in $H(\alpha)$ we take an $L$-partition $P_\alpha$ with $\dInd{L} P_\alpha<\dInd{L} Y$ for the restriction $f|(F\cap H(\alpha))$. This
means that $F\cap H(\alpha)\subset H(\alpha)\setminus P_\alpha$ and there is an extension $f'_\alpha\colon H(\alpha)\setminus
P_\alpha\to L$ of~$f|(F\cap H(\alpha))$.

Since $A_\mathfrak{m}\setminus A$ is finite, the set
\begin{equation}\label{1}
P=(H(\mu)\setminus \pi_X^{-1}(V_0))\cup{\bigcup}_{\alpha\in A_\mathfrak{m}\setminus A} P_\alpha
\end{equation}
is closed in $Z$ and $\dInd{L} P<\max\{\dInd{L} X+1,\dInd{L} Y\}$. It is an $L$-partition for $f$ because the function
$$f'(z)  = \left\{
         \begin{array}{rl}
         g(z)& \mbox{for }z\in \pi_{A_\mathfrak{m}}^{-1}(A)\cap\pi_X^{-1}(V_0),\\
         g'_\alpha (z) & \mbox{for } z\in H(\alpha), \mbox{ where } \mu\neq\alpha\in A, \mbox{ and}\\
         f'_\alpha (z) & \mbox{for } z\in H(\alpha)\setminus P_\alpha, \mbox{ where } \alpha\in A_\mathfrak{m}\setminus A,
         \end{array}
         \right. $$
is correctly defined on $Z\setminus P$, continuous, and extends $f$.
\end{proof}

\begin{lemma}\label{Ind-raise1}
Suppose that\/ $X$ is a non-empty, compact Fr\'echet space, $F\subset B\subset X$ are closed, and\/ $f\colon F \to L$ is a map that
does not extend to a map from~$B$ to~$L$. Let\/ $G=\pi_X^{-1} (F)\cap H(\mu)$ and\/ $g=f\circ (\pi_X|G)\colon G\to L$. If\/ $P$ is an
$L$-partition in $Z=Z(X,Y)$ for~$g$, then one of the following conditions is satisfied:
\begin{lister}
\item $B\cap\int\pi_X (P\cap H(\mu))\neq \emptyset$;
\item there is an\/ $\alpha\neq\mu$ such that\/ $\varphi(\alpha)\in\mathcal{S}_X$ is infinite and\/ $\pi_{A_\mathfrak{m}}^{-1}(\alpha)\cap\pi_X^{-1}(x^{\varphi(\alpha)})\subset P$, where\/
$x^{\varphi(\alpha)}\in B\cap\varphi(\alpha)$ is the limit point of\/ $\varphi(\alpha)$ {\em (}\hspace{-.2ex}and the intersection of
the point-inverses is homeomorphic to\/ $Y)$.
\end{lister}
\end{lemma}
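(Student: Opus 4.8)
The plan is to argue by contradiction: I would suppose that neither (a) nor (b) holds, and from the hypothesised $L$-partition $P$ for $g$ build an extension of $f$ from $B$ to $L$, contradicting the assumption on $f$.

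First I would set up the notation. Let $U=Z\setminus P$ be the $L$-neighbourhood of $g$, so there is a map $\tilde g\colon U\to L$ extending $g$. Since $\pi_X|H(\mu)$ is a homeomorphism onto $X$, identify $H(\mu)$ with $X$, so $\tilde g|(U\cap H(\mu))$ can be viewed as a partial map on $X$ extending $f$ off $\pi_X(P\cap H(\mu))$. Put $P_\mu=\pi_X(P\cap H(\mu))$, a closed subset of $X$ disjoint from $F$. The negation of (a) says $B\cap\int P_\mu=\emptyset$, equivalently $B\setminus P_\mu$ is dense in $B$. The map $f':=f\cup\bigl(\tilde g|(X\setminus P_\mu)\bigr)$ is then a continuous extension of $f$ defined on the dense subset $(B\setminus P_\mu)\cup F$ of $B$, with values in $L$. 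The natural strategy is to show this partial map is continuous at each point of $B\cap P_\mu$ and hence extends continuously to all of $B$ — but continuity of an $L$-valued map need not follow from density alone, so this is where the Fr\'echet hypothesis and the combinatorics of $Z(X,Y)$ must enter.

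The key point is to use the Fr\'echet property of $X$ together with the negation of (b) to show that values of $\tilde g$ on $H(\mu)$ near a point $b\in B\cap P_\mu$ are controlled by values of $\tilde g$ on the ``side fibres'' $H(\alpha)$, $\alpha\neq\mu$. Concretely, fix $b\in B\cap P_\mu$. Since $X$ is Fr\'echet and $b\in\cl(B\setminus P_\mu)$, choose a sequence $x_n\to b$ with $x_n\in B\setminus P_\mu$. I want to show $f'(x_n)=\tilde g(x_n)$ converges, to something depending only on $b$; then $f'$ extends continuously to $b$ with that value, and since $b$ was arbitrary we get the desired extension of $f$ from $B$ to $L$. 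To control $\tilde g(x_n)$ I would exploit that $U=Z\setminus P$ is open and that a neighbourhood base at the point of $H(\mu)$ over $x_n$ consists of sets $\pi_{A_\mathfrak m}^{-1}(A)\cap\pi_X^{-1}(V)$. Using Lemma~\ref{new-projection} applied to the open set $U$, there is $A\subset A_\mathfrak m$ with $\mu\in A$, $\card(A_\mathfrak m\setminus A)\leq\max\{wX,\aleph_0\}$, and $\pi_{A_\mathfrak m}^{-1}(A)\cap\pi_X^{-1}(\pi_X(U\cap H(\mu)))\subset U$; since $\{x_n\}\subset\pi_X(U\cap H(\mu))$, the whole fibre product $\pi_{A_\mathfrak m}^{-1}(A)\cap\pi_X^{-1}(\{x_n\})$ lies in $U$, and $\tilde g$ is defined there. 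The negation of (b) — no $\alpha\neq\mu$ has $\varphi(\alpha)$ infinite with $x^{\varphi(\alpha)}\in B$ and the corresponding $Y$-fibre inside $P$ — combined with the fact that every $S\in\mathcal S_X$ containing $b$ is realised as $\varphi(\alpha)$ for $\mathfrak m$-many $\alpha$, forces: for cofinally many $\alpha\in A$ with $b\in\varphi(\alpha)$, the side fibre over $b$ in $H(\alpha)$ meets $U$, and in fact (choosing $S=\{b\}\cup\{x_n:n\geq N\}\in\mathcal S_X$ for suitable $N$, using that $X$ is Fr\'echet so such convergent sequences give members of $\mathcal S_X$) the limit point of $\varphi(\alpha)$ can be taken to be $b$ with the $Y$-fibre over $b$ NOT inside $P$. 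On such an $H(\alpha)\cong\varphi(\alpha)\times Y$, continuity of $\tilde g$ along $\varphi(\alpha)$ at its limit point $b$ forces $\tilde g(x_n,y)\to\tilde g(b,y)$; specialising $y$ and comparing with the $H(\mu)$-side via the neighbourhood base, one pins down $\lim_n\tilde g(x_n)$ independently of the chosen sequence.

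The main obstacle, and the step I would spend the most care on, is the last one: transferring convergence information from the side fibres $H(\alpha)$ (where $\tilde g$ is genuinely continuous on a product $\varphi(\alpha)\times Y$) back to $H(\mu)$, and verifying that the resulting candidate value at $b$ does not depend on which sequence $x_n\to b$ (equivalently, which $S\in\mathcal S_X$) we used. This is exactly the place where one must carefully choose $S=\varphi(\alpha)$ to be a convergent sequence to $b$ through $B\setminus P_\mu$, invoke the Fr\'echet property of $X$ to guarantee enough such sequences exist, use $\card\varphi^{-1}(S)=\mathfrak m$ together with $\card(A_\mathfrak m\setminus A)<\mathfrak m$ to find $\alpha\in A$ realising $S$, and then use the negation of (b) to ensure the relevant $Y$-fibre avoids $P$ so that $\tilde g$ is defined on all of $\{b\}\times Y$ inside that $H(\alpha)$. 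Once all the candidate values agree, $f\cup(\tilde g|\,\cdot\,)$ glues to a continuous map $B\to L$ extending $f$, contradicting the hypothesis; hence (a) or (b) must hold.
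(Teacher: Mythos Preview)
Your contradiction strategy --- assume neither (a) nor (b) and build an extension of $f$ over $B$ --- is natural, and several of the individual steps are sound. In particular, the choice of $\alpha$ with $\varphi(\alpha)=S=\{b,x_1,x_2,\dots\}$ together with the negation of (b) does give a point $\pi_1(\alpha,b,y_0)\in U$, and with a little care (choosing $\alpha$ outside the countably many cofinite sets coming from continuity of $\tilde g$ at each $\pi_1(\mu,x_n,*)$) one really does get $\tilde g(\pi_1(\alpha,x_n,y))=f'(x_n)$ for all $n$ and all $y$, hence $f'(x_n)\to\tilde g(\pi_1(\alpha,b,y_0))$. Interleaving then gives sequence-independence, so $\bar f(b)$ is well defined for each $b\in B\cap P_\mu$.

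The genuine gap is the step you flag but do not resolve: proving that the resulting $\bar f\colon B\to L$ is \emph{continuous}. Since $B$ is Fr\'echet, it suffices to check sequential continuity, and the only nontrivial case is a sequence $c_n\to c$ with $c_n,c\in B\cap P_\mu$. Here each $\bar f(c_n)$ is defined via some auxiliary sequence $d_{n,m}\to c_n$ in $B\setminus P_\mu$, and comparing $\bar f(c_n)$ with $\bar f(c)$ amounts to extracting a diagonal $d_{n,m_n}\to c$ with $f'(d_{n,m_n})$ close to $\bar f(c_n)$. Fr\'echet spaces do \emph{not} enjoy this diagonal sequence property in general (that is the strictly stronger ``strongly Fr\'echet''/countably bisequential condition), and the spaces $X_{C,\alpha}$ to which the lemma is later applied are Fr\'echet but typically not first countable. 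Your attempt to route through a single $H(\alpha)$ with $\varphi(\alpha)=\{c,c_1,c_2,\dots\}$ does not help: the negation of (b) controls only the fibre over the \emph{limit} point $c$, giving you no handle on the fibres over $c_n$, and there is no a priori reason why the value $\tilde g(\pi_1(\alpha,c,y_0))$ for this $\alpha$ coincides with $\bar f(c)$, which was computed via a \emph{different} $\alpha'$ whose $\varphi(\alpha')\setminus\{c\}$ lies in $B\setminus P_\mu$.

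The paper circumvents all of this by not building the extension pointwise. It first reduces to the case $L=|K|$ (via West's theorem and Borsuk's homotopy extension theorem) and then replaces ``$f$ extends over $B$'' by the purely combinatorial criterion of Lemma~\ref{extend}: $f$ extends iff the closed $K$-tuple $f^{-1}(\mathcal K)$ admits a $K$-neighbourhood covering $B$. Assuming (a) fails, one reads off a candidate $K$-neighbourhood from $g'^{-1}(\mathcal U)|H(\mu)$; since $f$ does not extend, its closure fails to be a $K$-tuple at some $x_0\in B$, and a \emph{single} application of the Fr\'echet property at $x_0$ produces the convergent sequences $S_i$ that yield (b). No second-order sequential argument is needed.
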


\begin{proof}
We need Borsuk's homotopy extension theorem in the following formulation: {\em Suppose that $f_1,f_2\colon F\to L$ are homotopic maps
from a closed subspace $F$ of a compact space~$B$ into an ANR~$L$. Then $f_1$ has an extension from $X$ to $L$ iff $f_2$ has such an
extension} (cf.\ \cite[Lemma 1.9.7 and its proof]{eng-95}).

By West's Theorem \ref{west}, there exists a polyhedron $|K|$ with maps $\gamma_1\colon L\to |K|$, $\gamma_2\colon |K|\to L$ such that
$\gamma_2\circ \gamma_1\simeq \id_L$ (the composition is homotopic to the identity $\id_L$ on $L$) and $\gamma_1\circ \gamma_2\simeq
\id_{|K|}$. Evidently $f\simeq \gamma_2\circ \gamma_1\circ f$. It follows from the homotopy extension theorem that $\gamma_1\circ f$
does not extend to a map from $B$ to~$|K|$. Moreover, each $L$-partition in $Z$ for $g$ is a $|K|$-partition for $\gamma_1\circ
f\circ(\pi_X|G)$. Thus, we can {\em assume without loss of generality that $L=|K|$, and $f,g$ are maps into~$|K|$.}

Consider the closed $K$-cover $\mathcal{K}$ of $|K|$ (see the definition before Lemma \ref{extend}), and take an open swelling
$\mathcal{U}$ of $\mathcal{K}$ such that $\cl\mathcal{U}$ is a $K$-tuple of~$|K|$.

Take any $|K|$-partition $P\subset Z\setminus G$ for~$g$, and assume that the interior $\int\pi_X (P\cap H(\mu))$ does not meet~$B$.
Let $g'\colon Z\setminus P\to |K|$ be an extension of~$g$. Consider the open $K$-cover $\mathcal{V}=g'^{-1}(\mathcal{U})$ of
$Z\setminus P$. Remembering that $\pi_X|H(\mu)$ is a homeomorphism onto $X$, write $\mathcal{W}=\pi_X(\mathcal{V}|H(\mu))$ and note
that $\mathcal{W}$ is a $K$-neigh\-bour\-hood of $f^{-1}(\mathcal{K})$. In $B$ choose an open swelling~$\mathcal{H}$ of
$(\cl\mathcal{W})|B$. We have $B=\bigcup\mathcal{H}$ since $B\subset\cl\pi_X (H(\mu)\setminus P)=\bigcup\cl\mathcal{W}$. It follows
that $(\cl\mathcal{W})|B$ is not a $K$-tuple (in the other case, $\mathcal{H}$ would be a $K$-neigh\-bour\-hood of
$f^{-1}(\mathcal{K})$, and $f$~would have an extension from $B$ to $|K|$ by Lemma \ref{extend}). Therefore, there is an $x_0\in
B\cap\bigcap_{i\in I}\cl W_i$, where $I\subset\{0,\ldots,k\}$ and the simplex with vertices $e_i$, $i\in I$, does not belong to~$K$.
For each $i\in I$, take a sequence $S_i\subset W_i$ converging to $x_0$ ($X$ is Fr\'echet), and put $S=\{x_0\}\cup\bigcup_{i=0}^k
S_i$. By Lemma \ref{new-projection}, there is a set $A\subset A_\mathfrak{m}$ with $\mu\in A$, $\card (A_\mathfrak{m}\setminus
A)<\mathfrak{m}$, and $\pi_{A_\mathfrak{m}}^{-1}(A)\cap \pi_X^{-1}(W_i)\subset V_i$ for each~$i\in I$. As
$\card\varphi^{-1}(S)=\mathfrak{m}$, we can find an $\alpha\in A\setminus\{\mu\}$ such that $\varphi(\alpha)=S$.

If $i\in I$, then every point $\pi_1(\alpha,x_0,y)\in \pi_1(\{\alpha\}\times\{x_0\}\times Y)$ is the limit of the sequence
$\pi_1(\{\alpha\}\times S_i\times\{y\})\subset V_i=g'^{-1}(U_i)$. If we had $\pi_1(\alpha,x_0,y)\not\in P$, then we would obtain
$g'(\pi_1(\alpha,x_0,y))\in\cl U_i$ for $i\in I$, and $\bigcap_{i\in I}\cl U_i$ would be non-empty. As $\cl\mathcal{U}$ is a
$K$-tuple, we infer that $\pi_{A_\mathfrak{m}}^{-1}(\alpha)\cap\pi_X^{-1}(x_0)=\pi_1(\{\alpha\}\times\{x_0\}\times Y)\subset P$.
Finally, we can write $x^{\varphi(\alpha)}=x_0$.
\end{proof}

Let $X$ be a normal space and $b\in X$. Bearing in mind the convention that $\infty$ is bigger than any ordinal, we define
$$\dInd{L}_{b+}X=\min\label{indb+}\{\alpha\colon \mbox{there is a neighborhood $U$ of $b$ with $\dInd{L} \cl U\leq \alpha\}$}.$$
Note that if $B\subset X$ is closed and $b\in B$, then $\dInd{L}_{b+}B \leq\dInd{L}_{b+}X\leq \dInd{L} X $.

\begin{lemma}\label{Ind-raise2} Suppose that $X$ is a non-empty, compact Fr\'echet space, and $B$ is a closed subspace of~$X$. Let
$z\in H(\mu)$ be any point such that $c=\pi_X(z)\in B$ and $\dInd{L}_{c+} B\geq 1$. If $\dInd{L}_{b+} X\geq\alpha$ for each $b\in B$,
then
$$\dInd{L}_{z+} Z(X,Y)\geq\min\{\alpha,\dInd{L} Y\}+1.$$
\end{lemma}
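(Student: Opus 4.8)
The plan is to prove the sharper statement that $\dInd{L}\cl U\geq\min\{\alpha,\dInd{L} Y\}+1$ for \emph{every} neighbourhood $U$ of $z$ in $Z=Z(X,Y)$. Writing $\beta=\min\{\alpha,\dInd{L} Y\}$ and using that the sets $U_0=\pi_{A_\mathfrak{m}}^{-1}(A_0)\cap\pi_X^{-1}(O)$---with $\mu\in A_0$ clopen and cofinite in $A_\mathfrak{m}$ and $O$ an open neighbourhood of $c$ in $X$---form a neighbourhood base at $z$, together with the monotonicity of $\dInd{L}$ on closed subspaces, it is enough to show $\dInd{L}\cl U_0\geq\beta+1$ for each such $U_0$. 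Fix $U_0$, put $D=\cl U_0$, and note that $D\subset\pi_{A_\mathfrak{m}}^{-1}(A_0)$ while $U_0\subset\int D$.

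First I would manufacture a test map. Pick an open set $O_2$ with $c\in O_2\subset\cl O_2\subset O$ and set $B'=B\cap\cl(B\cap O_2)$, a closed subspace of $X$ that equals the closure of $B\cap O_2$ in $B$ and satisfies $c\in B'\subset O$, $B'\subset B$. Since $\dInd{L}_{c+} B\geq 1$, we get $\dInd{L} B'\geq 1$, so there is a map $f\colon F\to L$ from a closed set $F\subset B'$ which does not extend to $B'$. Let $G=\pi_X^{-1}(F)\cap H(\mu)$ and $g=f\circ(\pi_X|G)$; as $F\subset B'\subset O$ and $H(\mu)\subset\pi_{A_\mathfrak{m}}^{-1}(A_0)$, the closed set $G$ lies in $\pi_X^{-1}(O)\cap H(\mu)\subset U_0\subset\int D$.

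Next I would argue by contradiction. If $\dInd{L} D<\beta+1$, the definition of $\dInd{L}$ supplies an $L$-partition $P=D\setminus W$ in $D$ for $g$ with $\dInd{L} P<\beta$, where $W$ is open in $D$, $G\subset W$, and $g$ extends over $W$. Then $\widetilde P=P\cup(Z\setminus\int D)$ is an $L$-partition in $Z$ for $g$: indeed $Z\setminus\widetilde P=W\cap\int D$ is open in $Z$, contains $G$, and carries an extension of $g$. Now I invoke Lemma \ref{Ind-raise1} for $\widetilde P$, with $B'$ in the role of $B$; moreover, since its proof selects the ordinal of alternative~(b) from a subset of $A_\mathfrak{m}$ of cardinality $\mathfrak{m}$, I may require that ordinal---call it $\alpha_0$---to lie in $A_0$. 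In alternative~(a) there is a $b\in B'\cap\int\pi_X(\widetilde P\cap H(\mu))$; because $B'\subset O$ and $(\pi_X|H(\mu))^{-1}(O)\subset\int D$, every point of $H(\mu)$ over the open neighbourhood $V=O\cap\int\pi_X(\widetilde P\cap H(\mu))$ of $b$ belongs to $\widetilde P\cap\int D=P\cap\int D$, whence $(\pi_X|H(\mu))^{-1}(\cl V)$ is a closed copy of $\cl V$ inside $P$ and $\dInd{L} P\geq\dInd{L}\cl V\geq\dInd{L}_{b+} X\geq\alpha\geq\beta$. In alternative~(b) the set $\pi_{A_\mathfrak{m}}^{-1}(\alpha_0)\cap\pi_X^{-1}(x^{\varphi(\alpha_0)})$, which Lemma \ref{Ind-raise1} declares homeomorphic to $Y$, satisfies $\alpha_0\in A_0$ and $x^{\varphi(\alpha_0)}\in B'\subset O$, hence sits in $U_0\subset\int D$ and so in $P$, whence $\dInd{L} P\geq\dInd{L} Y\geq\beta$. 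In both cases $\dInd{L} P\geq\beta$, contradicting $\dInd{L} P<\beta$, so $\dInd{L}\cl U_0=\dInd{L} D\geq\beta+1$.

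The main obstacle is the gap between $L$-partitions of the closed neighbourhood $D$ and $L$-partitions of the whole of $Z$: Lemma \ref{Ind-raise1} only controls the enlargement $\widetilde P=P\cup(Z\setminus\int D)$, which is far bigger than $P$, so the crux is to force the obstruction it produces---a neighbourhood in $X$ of a point of $B'$, or a copy of $Y$ over a point of $B'$---to lie inside $\int D$, and therefore inside $P$ itself. This is precisely why $B'$ has to be shrunk into $O$ and why the index $\alpha_0$ in alternative~(b) has to be pushed into the cofinite set $A_0$; once that is arranged, the hypothesis $\dInd{L}_{b+} X\geq\alpha$ (in case~(a)) and the homeomorphism of the fibre with $Y$ (in case~(b)) complete the estimate.
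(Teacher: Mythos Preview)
Your argument is correct, but it takes a more roundabout route than the paper's. The paper exploits the structural remark made just after Lemma~\ref{new-projection}: the closed basic neighbourhood $\pi_{A_\mathfrak{m}}^{-1}(A)\cap\pi_X^{-1}(\cl U)$ is itself (up to a negligible re-indexing of $A_\mathfrak{m}$) a space of the form $Z(\cl U,Y)$. Hence the paper applies Lemma~\ref{Ind-raise1} \emph{inside} that neighbourhood directly, so that every $L$-partition $P$ there already satisfies the alternative (a) or~(b), and the estimate $\dInd{L} P\geq\min\{\alpha,\dInd{L} Y\}$ follows immediately with no need to pass to a larger $\widetilde P$ and pull the obstruction back.

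Your approach instead stays in the ambient $Z(X,Y)$: you enlarge an $L$-partition $P$ of $D$ to $\widetilde P=P\cup(Z\setminus\int D)$, invoke Lemma~\ref{Ind-raise1} for $\widetilde P$, and then argue that in both alternatives the obstruction actually lands back in $\int D$, hence in $P$. This works, but it costs you two extra manoeuvres: shrinking $B$ to a $B'\subset O$ so that the witness in case~(a) or the limit point $x^{\varphi(\alpha_0)}$ in case~(b) lies over $O$, and opening up the proof of Lemma~\ref{Ind-raise1} to force $\alpha_0\in A_0$. Both steps are legitimate (the set from which $\alpha_0$ is selected has cardinality $\mathfrak{m}$, so intersecting with the cofinite $A_0$ is harmless), but they make the argument longer and tie it to the internals of the cited lemma. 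The paper's recognition that $D$ is already a $Z(\cdot,Y)$ buys a cleaner one-step application of Lemma~\ref{Ind-raise1} with no bookkeeping.
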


\begin{proof}
It suffices to show that $\dInd{L} (\pi_{A_\mathfrak{m}}^{-1}(A)\cap\pi_X^{-1}(\cl U))\geq \min\{\alpha,\dInd{L} Y\}+1$ for any base
neighbourhood $\pi_{A_\mathfrak{m}}^{-1}(A)\cap\pi_X^{-1}(U)$ of~$z$, where $\mu\in A\subset A_\mathfrak{m}$, $A_\mathfrak{m}\setminus
A$ is finite, and $U$ is a neigh\-bour\-hood of~$c$. Let $V\ni c$ be open in $X$ and $\cl V\subset U$. We have $\dInd{L} (B\cap\cl
V)\geq 1$ as $\dInd{L}_{c+} B\geq 1$, and there is a closed set $F\subset B\cap \cl V$ with a map $f\colon F\to L$ that does not have
an extension from $B\cap\cl V$ to~$L$. Let $G=\pi_X^{-1}(F)\cap H(\mu)$ and $g=f\circ (\pi_X|G)$. Take an arbitrary $L$-partition $P$
for~$g$ in $\pi_{A_\mathfrak{m}}^{-1}(A)\cap\pi_X^{-1}(\cl U)$, which has the form of $Z(\cl U,Y)\subset\pi_1(A\times \cl U\times Y)$.
By Lemma \ref{Ind-raise1}, two cases may arise. (1)~Some $b\in B\cap\cl V$ is an interior point of $\pi_X (P\cap H(\mu))$ in~$\cl U$.
Then there is a neighbourhood $W\subset U\cap\pi_X (P\cap H(\mu))$ of $b$ in~$X$. In consequence, $\dInd{L}
P\geq\dInd{L}(\pi^{-1}_X(\cl W)\cap H(\mu))=\dInd{L}\cl W\geq\alpha$ because $\dInd{L}_{b+} X\geq\alpha$. (2)~$P$~contains a
homeomorphic copy of~$Y$, and then $\dInd{L} P\geq\dInd{L} Y$. Thus, $\dInd{L} P$~$\geq$ $\min\{\alpha,\dInd{L} Y\}$ in both cases,
which proves the lemma.
\end{proof}

For any normal space $X$, let us write
$$K(X)=\{b\in X\colon \dInd{L}_{b+} X=\dInd{L} X\}.$$
Observe that $K(X)$ is a closed subset of~$X$.

\begin{theo}\label{Ind-raise3}
Suppose that\/ $X$ and\/ $Y$ are non-empty compact spaces, and\/ $X$ is Fr\'echet. If\/ $\dInd{L} X=\dInd{L} Y$ and\/ $\dInd{L}
K(X)\geq 1$, then
$$\dInd{L} Z(X,Y)=\dInd{L} X+1.$$
\end{theo}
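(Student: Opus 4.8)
The plan is to prove the two inequalities separately. The upper bound $\dInd{L} Z(X,Y)\le\dInd{L} X+1$ is immediate from Lemma \ref{ANR-leq}, since $\dInd{L} Y=\dInd{L} X$ gives $\max\{\dInd{L} X+1,\dInd{L} Y\}=\dInd{L} X+1$. So the whole content is the lower bound $\dInd{L} Z(X,Y)\ge\dInd{L} X+1$.

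For the lower bound I would argue by contradiction, or rather directly: let $\alpha=\dInd{L} X=\dInd{L} Y$. If $\alpha=\infty$ there is nothing to prove, so assume $\alpha$ is an ordinal. I want to exhibit a single closed subset $F_0\subset Z=Z(X,Y)$ with a map $f_0\colon F_0\to L$ every $L$-partition of which has $\dInd{L}\ge\alpha$; that yields $\dInd{L} Z\not\le\alpha$, hence $\dInd{L} Z\ge\alpha+1$, and combined with the upper bound we get equality. The natural device is Lemma \ref{Ind-raise2} applied with $B=K(X)$: by hypothesis $\dInd{L} K(X)\ge 1$, and $K(X)$ is closed in $X$, so I first need a point $z\in H(\mu)$ with $c=\pi_X(z)\in K(X)$ and $\dInd{L}_{c+}K(X)\ge 1$. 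Such a point exists: since $\dInd{L} K(X)\ge1$, the set $K(X)$ is nonempty and not $\dInd{L}$-zero, and I can pick $c$ in the (closed, nonempty) subset $K(K(X))=\{b\in K(X)\colon\dInd{L}_{b+}K(X)=\dInd{L} K(X)\ge1\}$, then set $z=(\pi_X|H(\mu))^{-1}(c)$. (One should check $K(K(X))\neq\emptyset$; this is the standard fact that $\dInd{L}_{b+}$ attains its supremum $\dInd{L}$ on a compact space — indeed if no point had $\dInd{L}_{b+}X=\dInd{L} X$ then each point would have a neighbourhood of strictly smaller $\dInd{L}$-closure, and compactness plus a sum-type argument for $\dInd{L}$ would lower $\dInd{L} X$, a contradiction. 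If the excerpt's earlier material doesn't supply this sum theorem for $\dInd{L}$ directly, the definition of $K(X)$ being "closed" as asserted just before the theorem already packages what I need, and nonemptiness is part of the same observation.)

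Next, I need the hypothesis "$\dInd{L}_{b+}X\ge\alpha$ for each $b\in B$" of Lemma \ref{Ind-raise2} with $B=K(X)$ and $\alpha=\dInd{L} X$. But that is exactly the definition of $K(X)$: every $b\in K(X)$ satisfies $\dInd{L}_{b+}X=\dInd{L} X=\alpha$, hence $\ge\alpha$. So Lemma \ref{Ind-raise2} applies verbatim and gives
$$\dInd{L}_{z+} Z(X,Y)\ge\min\{\alpha,\dInd{L} Y\}+1=\min\{\alpha,\alpha\}+1=\alpha+1,$$
using $\dInd{L} Y=\alpha$. Since $\dInd{L}_{z+}Z\le\dInd{L} Z$ always, this yields $\dInd{L} Z(X,Y)\ge\alpha+1=\dInd{L} X+1$.

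Combining with the upper bound from Lemma \ref{ANR-leq} finishes the proof. The main obstacle, and the only nonroutine point, is securing the starting point $z$: one must know that $K(X)$ is nonempty and, more delicately, that within $K(X)$ one can find $c$ with $\dInd{L}_{c+}K(X)\ge1$ — i.e.\ that $K(X)$ does not degenerate to a $\dInd{L}$-zero-dimensional set despite $\dInd{L} K(X)\ge1$, which is guaranteed precisely by the hypothesis $\dInd{L} K(X)\ge 1$ together with the fact (used to justify that $K(\,\cdot\,)$ is closed and that the supremum defining $\dInd{L}$ is attained on compacta) that $\dInd{L}$ behaves like a classical large inductive dimension on compact spaces. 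Everything after that is a mechanical invocation of Lemmas \ref{ANR-leq} and \ref{Ind-raise2}.
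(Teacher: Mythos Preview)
Your overall architecture is exactly the paper's: the upper bound comes from Lemma~\ref{ANR-leq}, and the lower bound is obtained by applying Lemma~\ref{Ind-raise2} with $B=K(X)$ and $\alpha=\dInd{L}X$, which needs only a point $c\in K(X)$ with $\dInd{L}_{c+}K(X)\ge 1$. You also correctly isolate this last point as the only nonroutine step.

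The gap is in how you secure that point. You aim for the stronger statement $K(K(X))\neq\emptyset$, arguing that otherwise each $b\in K(X)$ would have a neighbourhood whose closure has $\dInd{L}$ strictly below $\dInd{L}K(X)$, and then ``compactness plus a sum-type argument for $\dInd{L}$'' would lower $\dInd{L}K(X)$. But no finite (or countable) sum theorem for $\dInd{L}$ at arbitrary ordinal levels is available in this paper, and such theorems are known to fail already for the classical $\Ind$ outside hereditarily normal spaces. Your fallback---that closedness of $K(\cdot)$ ``already packages'' nonemptiness---does not hold: closedness of $K(W)$ is immediate from the definition and says nothing about $K(W)\neq\emptyset$; indeed Lemma~\ref{kc} proves nonemptiness only for separable metric $X$ with finite $\dInd{L}X$.

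The fix is to aim lower, exactly as the paper does. You do not need $K(K(X))\neq\emptyset$; you only need some $c\in K(X)$ with $\dInd{L}_{c+}K(X)\ge 1$. Suppose there is none. Then every point of the compact set $K(X)$ has an open neighbourhood whose closure has $\dInd{L}=0$. Now use the trivial equivalence $\dInd{L}=0\Leftrightarrow\ddim{L}=0$ (both say every map from a closed set into $L$ extends) to rewrite these as $\ddim{L}=0$, take a finite subcover, and apply Fedorchuk's countable sum theorem for $\ddim{L}$ to get $\ddim{L}K(X)=0$, hence $\dInd{L}K(X)=0$, contradicting the hypothesis $\dInd{L}K(X)\ge 1$. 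With this correction your proof coincides with the paper's.
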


\begin{proof}
The inequality ``$\leq$'' results from Lemma \ref{ANR-leq}.

Assume that $\dInd{L} K(X)\geq 1$. The equality $\dInd{L}=0$ is equivalent to $\ddim{L}=0$. We claim that {\em there is a point $c\in
K(X)$ with $\dInd{L}_{c+} K(X)\geq 1$.} In the other case, using the compactness of $K(X)$, we could cover $K(X)$ by sets
$U_1,\ldots,U_n$ open in $K(X)$ and such that $\dInd{L} \cl U_i=0$ for $i=1,\ldots,n$. By the countable sum theorem for $\ddim{L}$
(Fe\-dor\-chuk \cite[Proposition 5.1]{fed-09}), we would obtain $\ddim{L} K(X)=0=\dInd{L} K(X)$. A contradiction. Therefore, we can
put $B=K(X)$ and apply Lemma \ref{Ind-raise2}.
\end{proof}

\begin{lemma}\label{kc}
If $X$ is a separable metric space with $\dInd{L} X=n\in\mathbb{N}$, then $K(X)$ is non-empty, and $\dInd{L}_{b+} K(X)=n$ for each
$b\in K(X)$.
\end{lemma}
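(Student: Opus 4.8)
The plan is to prove the two assertions—nonemptiness of $K(X)$ and the ``uniformity'' statement $\dInd{L}_{b+}K(X)=n$ for every $b\in K(X)$—by a Baire-category argument inside the Polish space $X$, exploiting the countable sum theorem for $\dInd{L}$ in the separable metric setting.

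First I would record the relevant sum theorem. For separable metric spaces one has $\dInd{L}=\ddim{L}$ (this follows from Theorems \ref{gen-relations} and \ref{west}, \ref{ANR-eqiv-eq}, reducing to $L=|K|$), and by Fedorchuk's countable sum theorem for $\ddim{L}$ (\cite[Proposition 5.1]{fed-09}) a separable metric space that is a countable union of closed subsets of $\ddim{L}\leq m$ has $\ddim{L}\leq m$. Hence: \emph{if $X$ is separable metric with $\dInd{L}X=n$, then $X$ cannot be covered by countably many closed sets $E$ with $\dInd{L}E<n$.} Next I would observe $\dInd{L}_{b+}X\le n$ for all $b$, so $K(X)=\{b:\dInd{L}_{b+}X=n\}$ is the set of points having no neighbourhood of dimension $<n$; equivalently $X\setminus K(X)=\bigcup\{U\subset X\text{ open}:\dInd{L}\cl U<n\}$, which is an open set whose closure is, by Lindel\"of, a countable union of sets $\cl U$ with $\dInd{L}\cl U<n$. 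If $K(X)$ were empty, this union would be all of $X$, contradicting the displayed sum-theorem consequence. So $K(X)\neq\emptyset$. The same reasoning, localized, gives: for every open $V$, if $V\cap K(X)=\emptyset$ then $\dInd{L}\cl V<n$ — indeed $\cl V\subset X\setminus K(X)$ up to a nowhere-dense adjustment, but more cleanly, $V$ is covered by the open sets $U$ above, hence (Lindel\"of) $\cl V=\bigcup_j\cl U_j'$ with each $\dInd{L}\cl U_j'<n$, so $\dInd{L}\cl V<n$ by the sum theorem.

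Now I would prove $\dInd{L}_{b+}K(X)=n$ for $b\in K(X)$. Fix $b\in K(X)$ and any neighbourhood $U$ of $b$ in $K(X)$; I must show $\dInd{L}\cl_{K(X)}U\geq n$. Write $U=O\cap K(X)$ for $O$ open in $X$; shrinking, assume $\cl O$ is a neighbourhood of $b$ in $X$, so $\dInd{L}\cl O\ge n$ since $b\in K(X)$. The goal is to ``push'' this dimension onto the part of $\cl O$ lying in $K(X)$. Consider the set $\cl O\setminus K(X)$: by the localized statement above, writing it as a countable union over a basis of open sets $V$ with $\cl V\cap K(X)=\emptyset$, each such $V$ satisfies $\dInd{L}\cl V<n$, and $\cl O\cap(X\setminus K(X))$ is contained in a countable union of such $\cl V$'s; hence, by the sum theorem (applied to the relatively closed pieces $\cl O\cap\cl V$), the subspace $\cl O\setminus K(X)$ has $\dInd{L}<n$. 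Also $\cl O\cap K(X)$ is closed in $X$ and contains $\cl_{K(X)}U$ up to the difference $\cl O\cap K(X)$ versus $\cl_{K(X)}(O\cap K(X))$, which are equal. If we had $\dInd{L}(\cl O\cap K(X))<n$, then $\cl O=(\cl O\cap K(X))\cup(\cl O\setminus K(X))$ would be the union of two subspaces of $\dInd{L}<n$; by the (two-set, closed-on-one-side) sum theorem for $\ddim{L}=\dInd{L}$ in separable metric spaces this gives $\dInd{L}\cl O<n$, contradicting $b\in K(X)$. Hence $\dInd{L}(\cl_{K(X)}U)=\dInd{L}(\cl O\cap K(X))=n$, and since $U$ was an arbitrary neighbourhood of $b$ in $K(X)$ this means $\dInd{L}_{b+}K(X)\ge n$; the reverse inequality is trivial because $\dInd{L}K(X)\le\dInd{L}X=n$.

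The main obstacle, and the step I would spell out most carefully, is the decomposition argument: verifying that $\cl O\setminus K(X)$ genuinely has $\dInd{L}<n$. This needs (i) the Lindel\"of property to reduce the open cover of $X\setminus K(X)$ by ``small'' open sets to a countable subcover, (ii) the fact that $\dInd{L}\le m$ passes to subspaces and to sets of the form (closed)$\,\cap\,$(small), and (iii) a sum theorem for $\ddim{L}$ that allows countably many not-necessarily-closed pieces provided they are closed in the ambient subspace — this is exactly the form in which Fedorchuk's \cite[Proposition 5.1]{fed-09} together with $\dInd{L}=\ddim{L}$ for separable metric spaces is available. Once that is in place, the two-set closed sum theorem finishes the job. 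Subspace monotonicity of $\dInd{L}$ in the separable metric range, used implicitly above, likewise follows from $\dInd{L}=\ddim{L}$ and the corresponding property of $\ddim{L}$; I would cite it rather than reprove it.
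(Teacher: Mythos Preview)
Your overall strategy coincides with the paper's: use $\dInd{L}=\ddim{L}$ on separable metric spaces, write $X\setminus K(X)$ as a countable union of closed sets $\cl U_i$ with $\dInd{L}\cl U_i<n$, and apply the countable sum theorem.  The first part (non-emptiness, indeed $\dInd{L}K(X)=n$) is fine and essentially identical to the paper.

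The second part has a genuine gap.  You assert that for any open $O\subset X$ with $U=O\cap K(X)$ one has $\cl_X O\cap K(X)=\cl_{K(X)}U$.  This is false in general: take $X=[0,1]$, a closed set $K=\{0,1\}$, $b=0$, $U=\{0\}$, and $O=[0,1)$; then $\cl O\cap K=\{0,1\}\ne\{0\}=\cl_K U$.  Only the inclusion $\cl_{K(X)}U\subset\cl O\cap K(X)$ holds, and your argument proves $\dInd{L}(\cl O\cap K(X))=n$ for the \emph{larger} set, which does not give $\dInd{L}\cl_{K(X)}U\ge n$.  (The subsequent appeal to a ``two-set, closed-on-one-side'' sum theorem is also unnecessary: you already wrote $\cl O$ as $(\cl O\cap K(X))\cup\bigcup_j(\cl O\cap\cl V_j)$, a countable union of closed pieces, so the ordinary countable closed sum theorem suffices.)

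The paper's proof removes this difficulty by invoking the hereditary normality of the metric space $X$ to choose $V$ open in $X$ with $U=V\cap K(X)$ \emph{and} $\cl_{K(X)}U=\cl V\cap K(X)$; once that equality is in hand, the first-paragraph argument applied to $\cl V$ gives $\dInd{L}\cl_{K(X)}U=n$ directly.  An equally easy repair of your argument: given $U$, use regularity to pick $O$ with $b\in O$ and $\cl O\cap K(X)\subset U$ (possible since $K(X)\setminus U$ is closed and avoids $b$); then $\cl O\cap K(X)\subset\cl_{K(X)}U$, and your countable-sum contradiction shows $\dInd{L}(\cl O\cap K(X))=n$, hence $\dInd{L}\cl_{K(X)}U\ge n$ by monotonicity.
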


\begin{proof}
Theorem \ref{gen-relations} implies that $\ddim{L}=\dInd{L}$ for closed subspaces of~$X$. $X$~has a countable base $\mathcal{B}$, and
$X\setminus K(X)$ is the union of a sequence $\cl U_i$, where $U_i\in\mathcal{B}$ and $\dInd{L}\cl U_i<n$ for $i=0,1,\ldots\mbox{}$ If
we had $\dInd{L} K(X)<n$, then we would obtain $\dInd{L} X<n$ by the countable sum theorem for $\ddim{L}$ (Fedorchuk \cite[Proposition
5.1]{fed-09}). Thus $\dInd{L} K(X)=n$.

Let $b\in K(X)$, and $U$ be a neighbourhood of $b$ in $K(X)$. Using the hereditary normality of~$X$, one can find a neighbourhood $V$
of~$b$ in~$X$ such that $U=V\cap K(X)$ and $\cl U=\cl V\cap K(X)$. Then $\dInd{L}\cl V=n$. By the same argument as in the first
paragraph, we infer that $\dInd{L}\cl U=n$. Therefore $\dInd{L}_{b+} K(X)\geq n$.
\end{proof}

\begin{theo}\label{ANR-main}
Let $L$ be a compact metric ANR. Suppose that $C$ is a metric continuum with $1\leq n=\ddim{L} C<\infty$. For each ordinal $\alpha\geq
n$, there exists a compact Fr\'echet space $X_{C,\alpha}$ such that
\begin{lister}
\item $\ddim{L} X_{C,\alpha} =n$,
\item $\dInd{L} X_{C,\alpha} =\alpha$, and
\item each component of $X_{C,\alpha}$ is homeomorphic to $C$.
\end{lister}
\end{theo}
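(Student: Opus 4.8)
\textbf{Proof plan for Theorem \ref{ANR-main}.}
The plan is to construct $X_{C,\alpha}$ by transfinite recursion on $\alpha$, using the operation $Z(\cdot,\cdot)$ as the single step that raises $\dInd{L}$ by one while preserving both $\ddim{L}$ and the homeomorphism type of the components. By Theorems \ref{west} and \ref{ANR-eqiv-eq} we may replace $L$ by a compact polyhedron $|K|$, so $n=\ddim{|K|} C$ and all the machinery of Sections \ref{plank}--\ref{fourth} applies. The recursion starts at $\alpha=n$: since $C$ is a metric continuum with $\ddim{L} C=n<\infty$, Theorem \ref{gen-relations} gives $\dInd{L} C=n$ as well, so we set $X_{C,n}=C$; it is a (trivially) compact Fr\'echet space, it equals its unique component, and (a), (b), (c) hold. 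For a successor $\alpha+1$, assuming $X_{C,\alpha}$ has been built, put
\begin{equation}\label{succ-step}
X_{C,\alpha+1}=Z(X_{C,\alpha},X_{C,\alpha}).
\end{equation}
Proposition \ref{general-zxy} tells us $X_{C,\alpha+1}$ is a compact Fr\'echet space whose components are components of $X_{C,\alpha}$, hence each is homeomorphic to $C$, so (c) is inherited; Lemma \ref{ddim} gives $\ddim{L} X_{C,\alpha+1}=\ddim{L} X_{C,\alpha}=n$, so (a) is inherited. For (b) we invoke Theorem \ref{Ind-raise3} with $X=Y=X_{C,\alpha}$: we need $X_{C,\alpha}$ Fr\'echet (true), $\dInd{L} X=\dInd{L} Y$ (true, same space), and $\dInd{L} K(X_{C,\alpha})\geq 1$. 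For a limit ordinal $\alpha$ we take the one-point compactification of the topological sum $\bigoplus_{\beta<\alpha} X_{C,\beta}$ (the Smirnov-type construction); each $X_{C,\beta}$ sits in it as a clopen piece, so $\dInd{L}$ of the sum is $\sup_{\beta<\alpha}\dInd{L} X_{C,\beta}=\alpha$, Theorem \ref{compon1}/Lemma \ref{ddim}-style reasoning gives $\ddim{L}=n$ (only countably many summands are needed if $\alpha$ has countable cofinality; for the general case one checks $\ddim{L}$ of a one-point compactification of a sum of spaces of dimension $\leq n$ is still $\leq n$ from Definition \ref{def1}(b')), the components are still copies of $C$, and the Fr\'echet property of a one-point compactification of a clopen sum of Fr\'echet spaces is routine.

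The key step — and the main obstacle — is verifying the hypothesis $\dInd{L} K(X_{C,\alpha})\geq 1$ needed to apply Theorem \ref{Ind-raise3} at each successor. I would handle this by strengthening the inductive hypothesis: carry along, in addition to (a)--(c), the assertion that $K(X_{C,\alpha})$ is non-empty with $\dInd{L}_{b+}K(X_{C,\alpha})=\dInd{L} X_{C,\alpha}$ for every $b\in K(X_{C,\alpha})$ (in particular, since $\dInd{L} X_{C,\alpha}\geq n\geq 1$, this gives $\dInd{L} K(X_{C,\alpha})\geq 1$). At the base step $X_{C,n}=C$ this is exactly Lemma \ref{kc} (a metric continuum is separable metric). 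To propagate it across a successor step \eqref{succ-step}, I would analyse $K(Z(X,X))$ directly: a base neighbourhood of a point $z\in H(\mu)$ has closed hull of the form $Z(\cl U,X)$, and combining Lemma \ref{ANR-leq} (upper bound $\dInd{L}\cl U+1$, and $\dInd{L}_{c+}X$-type estimates) with Lemma \ref{Ind-raise2} (lower bound $\min\{\dInd{L}_{b+}X \text{ over } b,\dInd{L} X\}+1$) shows that for $z\in H(\mu)$ with $\pi_X(z)\in K(X)$ one gets $\dInd{L}_{z+}Z(X,X)=\dInd{L} X+1=\dInd{L} Z(X,X)$, so the copy of $K(X)$ inside $H(\mu)$ lies in $K(Z(X,X))$ and witnesses the strengthened hypothesis. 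At a limit step the strengthened hypothesis follows because $K$ of a one-point compactification of a clopen sum contains (a copy of) $K(X_{C,\beta})$ for cofinally many $\beta$, or more simply the whole non-isolated point's neighbourhoods have $\dInd{L}$ equal to $\alpha$.

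Finally one packages the three cases into a single transfinite recursion and checks that the conjunction of (a), (b), (c) together with the strengthened $K(\cdot)$-statement is preserved at each stage; the successor case is where all the real content sits, and it rests squarely on the two-sided estimate for $\dInd{L}$ of $Z(X,X)$ obtained by pairing Lemmas \ref{ANR-leq}, \ref{Ind-raise1}, \ref{Ind-raise2} with Theorem \ref{Ind-raise3}. The limit case and the bookkeeping are routine once the invariant is chosen correctly; the subtlety to watch is that $Z(X,Y)$ requires $X,Y$ compact and the cardinal $\mathfrak m$ to be chosen large (which is harmless, as the construction's dependence on $\mathfrak m$ is immaterial by the discussion following Proposition \ref{general-zxy}).
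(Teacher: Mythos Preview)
Your successor step is essentially the paper's: start with $X_{C,n}=C$, put $X_{C,\alpha+1}=Z(X_{C,\alpha},X_{C,\alpha})$, and use Lemmas \ref{ANR-leq} and \ref{Ind-raise2} for the two-sided estimate on $\dInd{L}$. The paper, however, does not carry the set $K(X_{C,\alpha})$ as its invariant; it carries a fixed closed $B_\alpha\subset X_{C,\alpha}$, homeomorphic to $K(C)$, with $\dInd{L}_{b+}X_{C,\alpha}\geq\alpha$ for every $b\in B_\alpha$, and at successors sets $B_{\alpha+1}=H(\mu)\cap\pi_X^{-1}(B_\alpha)$. This avoids your difficulty that the universal statement ``$\dInd{L}_{b+}K(X_{C,\alpha})=\dInd{L}X_{C,\alpha}$ for \emph{every} $b\in K(X_{C,\alpha})$'' must be checked on all of $K(Z(X,X))$, whereas you only verify it on the copy of $K(X)$ inside $H(\mu)$.

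Your limit step has a genuine gap. In the bare one-point compactification $D$ of $\bigoplus_{\beta<\alpha}X_{C,\beta}$ the point at infinity is a singleton component, so condition (c) fails outright. Moreover $K(D)$ is exactly that singleton: every point of a summand $X_{C,\beta}$ has a clopen neighbourhood with $\dInd{L}\leq\beta<\alpha$, so $K(X_{C,\beta})$ does \emph{not} embed in $K(D)$ as you claim, and $\dInd{L}K(D)=0$ kills your invariant for the next successor. The paper repairs both defects simultaneously: it glues a copy of $C$ onto $D$ at the remainder point to obtain $Y$ (now all components are copies of $C$, and $\dInd{L}Y=\alpha$ via Lemma \ref{zero-Dowker}), and then sets $X_{C,\alpha}=Z(C,Y)$ with $B_\alpha=\pi_X^{-1}(K(C))\cap H(\mu)$. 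The extra pass through $Z(C,\cdot)$ is essential: every basic neighbourhood of a point of $H(\mu)$ in $Z(C,Y)$ has closure containing a full copy of $Y$, which restores $\dInd{L}_{b+}X_{C,\alpha}\geq\alpha$ on an entire copy of $K(C)$, not just at a single point.
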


\begin{proof}
$K(C)$ is closed in $C$, and $n=\dInd{L}_{b+} K(C)\leq\dInd{L}_{b+} C\leq n$ for each $b\in K(C)$ (Lemma \ref{kc}). By transfinite
induction on $\alpha$, we shall construct compact Fr\'echet spaces $X_{C,\alpha}$, $\alpha\geq n$, and closed subspaces
$B_\alpha\subset X_{C,\alpha}$ such that
\begin{lister}
\item every component of $X_{C,\alpha}$ is homeomorphic to~$C$;
\item $B_\alpha$ is homeomorphic to~$K(C)$;
\item $\dInd{L} X_{C,\alpha}\leq\alpha$; and
\item $\dInd{L}_{b+} X_{C,\alpha}\geq\alpha$ for each $b\in B_\alpha$.
\end{lister}

For $\alpha=n$, let $X_{n,n}=C$ and $B_n=K(C)$. Assume $X_{C,\alpha}\supset B_\alpha$ are compact, Fr\'echet, and satisfy (a--d). Let
$X=Y=X_{C,\alpha}$, $\mathfrak{m}=\max\{(wX_{C,\alpha})^+,\card\mathcal{S}_{X_{C,\alpha}}\}$,
$X_{C,\alpha+1}=Z(X_{C,\alpha},X_{C,\alpha})$, and $B_{\alpha+1}=H(\mu)\cap\pi_X^{-1}(B_\alpha)$. By Proposition \ref{general-zxy},
$X_{C,\alpha+1}$ is Fr\'echet, and each of its components is homeomorphic to~$C$. The restriction $\pi_X|B_{\alpha+1}$ is a
homeomorphism onto $B_\alpha$. $\dInd{L}X_{C,\alpha+1}\leq\alpha+1$ by Lemma \ref{ANR-leq}, and $\dInd{L}_{b+}
X_{C,\alpha+1}\geq\alpha+1$ for each $b\in B_{\alpha +1}$ by Lemma \ref{Ind-raise2}.

Assume that $\alpha$ is a limit ordinal, and there are $X_{C,\beta}\supset B_\beta$ for $\beta<\alpha$. Let~$D$ be the one-point
compactification of the topological sum ${\bigoplus}_{\beta<\alpha} X_{C,\beta}$, and $d_0\in D$ the unique point in the remainder. In
the disjoint sum of $C$ and $D$, identify~$d_0$ with a point $c_0\in C$, and call the resulting space $Y$. Using the fact that
$A_\mathfrak{n}$ is Fr\'echet for every $\mathfrak{n}$, one routinely checks that $Y$ is Fr\'echet. It follows from Lemma
\ref{zero-Dowker} that $\dInd{L} Y=\alpha$. Put $X=C$, $\mathfrak{m}=2^{\aleph_0} +(\sup\{wX_{C,\beta}\colon\beta<\alpha\})^+$,
$X_{C,\alpha}=Z(X,Y)$, and $B_\alpha=\pi_X^{-1}(K(C))\cap H(\mu)$. $X_{C,\alpha}$ is Fr\'echet, (a, c) are satisfied (see Proposition
\ref{general-zxy} and Lemma~\ref{ANR-leq}), and (b, d) are evident.

The conditions (c, d) yield the equality $\dInd{L}X_{C,\alpha}=\alpha$, and $\ddim{L} X_{C,\alpha}=n$ by Theorem \ref{compon1}.
\end{proof}

\begin{rem} \em
The construction in the above proof is essentially the same as in the proof of Theorem 5 in \cite{krz-10a} (see Remarks 3--4 therein),
which yields a compact Fr\'echet space $X_{C,\alpha}$ with $\dim X_{C,\alpha}=n$, $\trind X_{C,\alpha}=\trInd X_{C,\alpha}=\alpha$,
and with components homeomorphic to~$C$. The proofs of Lemmas \ref{ANR-leq} and \ref{Ind-raise2} in the present\linebreak paper are
more complex than the proofs of corresponding Lemmas 6 and 7 in~\cite{krz-10a}.

We may add at this place that Lemma 7 in \cite{krz-10a} needs one more assumption (necessary but missed out): the space $B$ in that
statement should be a non-degenerate\linebreak continuum (then each component of a non-empty open subspace is uncountable).
\end{rem}

Proposition \ref{cube} and Theorem \ref{ANR-main} yield

\begin{cor}\label{ANR-cor}
Let $L$ be a non-contractible, compact metric ANR,\/ $1\leq n\in\mathbb{N}$, and let $\alpha\geq n$ be an ordinal. If\/ $n=1$ or the
join $L\ast L$ is non-contractible, then there exists a compact Fr\'echet space $X_{n,\alpha}$ such that
\begin{lister}
\item $\ddim{L} X_{n,\alpha}=n$,
\item $\dInd{L} X_{n,\alpha}=\alpha$, and
\item each component of $X_{n,\alpha}$ is homeomorphic to a cube $[0,1]^{m}$ for a certain natural number $m=m(L,n)$.\qed
\end{lister}
\end{cor}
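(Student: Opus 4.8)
The plan is simply to stack the two results quoted immediately before the corollary, so the write-up will be short. First I would apply Proposition \ref{cube}: the hypothesis of the corollary---that $n=1$ or the join $L\ast L$ is non-contractible---is exactly the hypothesis of that proposition, so there exists a natural number $m=m(L,n)$ with $\ddim{L}[0,1]^m=n$. Thus the cube $C:=[0,1]^m$ is a metric continuum with $1\le n=\ddim{L} C<\infty$, which is precisely the input required by Theorem \ref{ANR-main}.

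Next I would feed $C$ and the given ordinal $\alpha\ge n$ into Theorem \ref{ANR-main}, which produces a compact Fr\'echet space $X_{C,\alpha}$ with $\ddim{L} X_{C,\alpha}=n$, $\dInd{L} X_{C,\alpha}=\alpha$, and each component homeomorphic to $C=[0,1]^m$. Setting $X_{n,\alpha}:=X_{C,\alpha}$ then gives statements (a), (b), (c) of the corollary verbatim from statements (a), (b), (c) of Theorem \ref{ANR-main}, with the same $m=m(L,n)$ coming from Proposition \ref{cube}.

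I do not expect any genuine obstacle here: the corollary is a direct specialisation of Theorem \ref{ANR-main} to the case $C=[0,1]^m$, and the only thing to check is that this $C$ is an admissible argument---which it is, being a non-degenerate metric continuum with finite, positive $\ddim{L}$. The entire content of the corollary sits in Proposition \ref{cube} (existence of $m$) and Theorem \ref{ANR-main} (the $Z(X,Y)$ construction and the transfinite induction on $\alpha$), both of which are available. So the ``proof'' is essentially the two-sentence derivation above, possibly preceded by the remark that the hypothesis of the corollary is what makes $m$ well-defined.
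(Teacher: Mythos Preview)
Your proposal is correct and matches the paper's own proof, which simply states that the corollary follows from Proposition~\ref{cube} and Theorem~\ref{ANR-main}. Your write-up spells out exactly this two-step combination---obtain $m$ from Proposition~\ref{cube}, then apply Theorem~\ref{ANR-main} to $C=[0,1]^m$---so there is nothing to add or correct.
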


\section{Compact spaces with $\ddim{K}<\dInd{K}$ or $\dInd{K}<\dInd{|K|}$, where $K$~is a simplicial complex} \label{fifth}

This section is devoted to the behaviour of $\dInd{K}$ under the operation $Z(X,Y)$. We obtain inequalities for $\dInd{K}$ that
resemble those in preceding section for $\dInd{L}$, and we establish conditions in order that $\dInd{K} Z(X, X) = \dInd{K} X$ or
$\dInd{K} Z(X, X) = \dInd{K} X+ 1$.

\begin{lemma}\label{proj-tuple}
If $\mathcal{F}$ is a closed $K$-tuple of $Z(X,Y)$, then there is a set $A\subset A_\mathfrak{m}$ such that $\mu\in A$, $A_\mathfrak{m}\setminus A$ is finite, and
$\pi_X(\mathcal{F}|\pi^{-1}_{A_\mathfrak{m}}(A))$ is a closed $K$-tuple of $X$.
\end{lemma}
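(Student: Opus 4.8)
The plan is to exploit that $H(\mu)=\pi_{A_\mathfrak{m}}^{-1}(\mu)$ is closed in the compact space $Z=Z(X,Y)$ and that $\pi_X|H(\mu)$ is a homeomorphism onto $X$. Hence $\mathcal{G}=\pi_X(\mathcal{F}|H(\mu))=(G_0,\dots,G_k)$ is a closed $K$-tuple of the (compact, hence normal) space $X$, and I would start by fixing an open $K$-neighbourhood $\mathcal{V}=(V_0,\dots,V_k)$ of $\mathcal{G}$ in $X$, i.e.\ an open $K$-tuple with $G_i\subset V_i$ for each $i$; such a $\mathcal{V}$ exists by the corollary to \cite[Theorem 7.1.4]{eng-89} recalled in Section~1. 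The set $A$ will be chosen so large that, over $A$, the ``relevant'' part of $\mathcal{F}$ projects into $\mathcal{V}$.

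Next I would observe that, for each $i$, the set $F_i\setminus\pi_X^{-1}(V_i)$ is closed in $Z$ and disjoint from $H(\mu)$: if $z\in F_i\cap H(\mu)$, then $\pi_X(z)\in\pi_X(F_i\cap H(\mu))=G_i\subset V_i$, so $z\in\pi_X^{-1}(V_i)$. Being a closed subset of the compact space $Z$ it is compact, so its image $\pi_{A_\mathfrak{m}}(F_i\setminus\pi_X^{-1}(V_i))$ is a compact subset of $A_\mathfrak{m}\setminus\{\mu\}$; a compact set lying in the discrete part of a one-point compactification is finite. I would then put $A=A_\mathfrak{m}\setminus\bigcup_{i=0}^k\pi_{A_\mathfrak{m}}(F_i\setminus\pi_X^{-1}(V_i))$, so that $\mu\in A$, the complement $A_\mathfrak{m}\setminus A$ is finite, and $A$ is clopen.

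Finally I would verify the conclusion. From the choice of $A$ one gets $F_i\cap\pi_{A_\mathfrak{m}}^{-1}(A)\subset\pi_X^{-1}(V_i)$ (a point $z$ of the left side outside $\pi_X^{-1}(V_i)$ would have $\pi_{A_\mathfrak{m}}(z)\in A_\mathfrak{m}\setminus A$), hence $\pi_X(F_i\cap\pi_{A_\mathfrak{m}}^{-1}(A))\subset V_i$; and since $A$ is clopen, $F_i\cap\pi_{A_\mathfrak{m}}^{-1}(A)$ is closed in $Z$, so its $\pi_X$-image is compact, hence closed in $X$. Thus $\pi_X(\mathcal{F}|\pi_{A_\mathfrak{m}}^{-1}(A))$ is a closed $(k+1)$-tuple of $X$ contained coordinatewise in the $K$-tuple $\mathcal{V}$; so whenever $\bigcap_{i\in I}\pi_X(F_i\cap\pi_{A_\mathfrak{m}}^{-1}(A))\neq\emptyset$ we get $\bigcap_{i\in I}V_i\neq\emptyset$, and therefore $\{e_i\colon i\in I\}$ is the vertex set of a simplex of $K$. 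Hence it is a closed $K$-tuple of $X$, as required.

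I do not anticipate a genuine obstacle here; the one observation that makes the argument run is that $F_i\setminus\pi_X^{-1}(V_i)$ misses the fibre $H(\mu)$, so its image under $\pi_{A_\mathfrak{m}}$ is a finite subset of $A_\mathfrak{m}\setminus\{\mu\}$. Everything else is routine bookkeeping with clopen subsets of $A_\mathfrak{m}$ together with the trivial fact that a closed tuple contained coordinatewise in a $K$-tuple is itself a $K$-tuple.
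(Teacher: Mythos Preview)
Your proof is correct and follows essentially the same approach as the paper: restrict $\mathcal{F}$ to $H(\mu)$, push it forward to a closed $K$-tuple of $X$, take a $K$-neighbourhood $\mathcal{V}$ (the paper calls it $\mathcal{U}$), observe that each $F_i\setminus\pi_X^{-1}(V_i)$ misses $H(\mu)$ so its $\pi_{A_\mathfrak{m}}$-image avoids $\mu$ and is therefore finite, and define $A$ as the complement of the union of these finite sets. Your verification that the resulting projected tuple is closed and sits coordinatewise inside $\mathcal{V}$ (hence is a $K$-tuple) spells out exactly what the paper dismisses with ``as easily checked.''
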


\begin{proof} Take any closed $K$-tuple $\mathcal{F}$ of $Z(X,Y)$. Then, the $K$-tuple $\pi_X(\mathcal{F}|H(\mu))$ has a $K$-neigh\-bour\-hood $\mathcal{U}$ in $X$. Since $\pi_X(F_i\cap H(\mu))\subset
U_i$ for $i=0,\ldots,k$, we have $\mu\not\in A_i=\pi_{A_\mathfrak{m}}(F_i\setminus\pi_X^{-1}(U_i))$ for each $i$. Since $A_i$ are closed in $A_\mathfrak{m}$, they are finite. As easily checked,
$A=A_\mathfrak{m}\setminus\bigcup_{i=0}^k A_i$ has the required properties.
\end{proof}

\begin{lemma}\label{big-half}
Suppose that $\mathcal{U}$ is an open $K$-tuple of $X$, and $\dobs{K}\mathcal{U}=\emptyset$. Then there is a $K$-neighbourhood
$\mathcal{V}$ of $\pi_X^{-1}(\mathcal{U})$ in $Z(X,Y)$ with
$$Z(X,Y)\setminus\bigcup\mathcal{V}=H(\mu)\setminus\pi_X^{-1}\left(\bigcup\mathcal{U}\right).$$

If moreover $K=\partial\Delta^k$, where $k\geq 2$, and\/ $\cl\mathcal{U}$ is a $\partial\Delta^k$-tuple, then
$\mathcal{V}$ can be chosen so that $\dobs{\partial\Delta^k}\mathcal{V}=\emptyset$.
\end{lemma}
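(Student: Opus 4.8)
\textbf{Proof plan for Lemma \ref{big-half}.}

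The plan is to build $\mathcal{V}$ explicitly from $\mathcal{U}$, exploiting the product-and-compression structure of $Z(X,Y)$. First I would set $V_i'=\pi_X^{-1}(U_i)$ for $i=0,\dots,k$; this is an open $K$-tuple of $Z=Z(X,Y)$ whose complement is $Z\setminus\bigcup_i\pi_X^{-1}(U_i)=\pi_X^{-1}\bigl(X\setminus\bigcup\mathcal{U}\bigr)$. The trouble is that this complement contains, for each $\alpha\neq\mu$, the whole fibre-like set $H(\alpha)\cap\pi_X^{-1}(X\setminus\bigcup\mathcal{U})$, and we want the complement to shrink to $H(\mu)\setminus\pi_X^{-1}(\bigcup\mathcal{U})$ only. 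So the job is to enlarge the $V_i'$ over the copies $H(\alpha)$, $\alpha\neq\mu$, so that together they cover all of $H(\alpha)$, while keeping the $K$-tuple condition. Here is where $\dobs{K}\mathcal{U}=\emptyset$ enters: by the definition of $K$-obstruction points, every $x\in X$ lies in $\bigcup\mathcal{W}$ for some $K$-neighbourhood $\mathcal{W}$ of $\mathcal{U}$; equivalently (cf.\ the proof of Lemma \ref{smaller-partition}), the sets $W_i=\bigcup\{V_i:\mathcal{V}\text{ is a }K\text{-nbhd of }\mathcal{U}\}$ form an open cover of $X$.

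Next I would work fibrewise over $A_\mathfrak{m}\setminus\{\mu\}$. Fix $\alpha\neq\mu$ and recall $H(\alpha)$ is homeomorphic to $\varphi(\alpha)\times Y$ via $\pi_X$ and $\pi_{A_\mathfrak{m}}$, with $\varphi(\alpha)\in\mathcal{S}_X$ a finite set or a convergent sequence. Since $\{W_i\}$ covers the compact-or-convergent-sequence set $\varphi(\alpha)$, I can choose for each point of $\varphi(\alpha)$ an index $i$ and a $K$-neighbourhood of $\mathcal{U}$ witnessing membership, then—using that $\varphi(\alpha)$ is $\mathcal{S}_X$-small so its topology is that of a compact metrizable countable space—produce a \emph{clopen} partition $\varphi(\alpha)=E_0^\alpha\cup\dots\cup E_k^\alpha$ (disjoint) with $E_i^\alpha\subset W_i$ for each $i$. (One picks a clopen neighbourhood of each point inside the relevant $W_i$, uses compactness to get a finite subcover, and disjointifies; the only subtle point is the limit point of an infinite $\varphi(\alpha)$, which sits in exactly one $W_i$, forcing a whole clopen tail into that $E_i^\alpha$.) Because each $E_i^\alpha\subset W_i$, there is a $K$-neighbourhood $\mathcal{V}^{\alpha,i}$ of $\mathcal{U}$ with $E_i^\alpha\subset V_i^{\alpha,i}$; now set, over $H(\alpha)$, the enlargement $V_i$ to agree on $\pi_X^{-1}(\bigcup\mathcal{V}^{\alpha,j})$-type sets appropriately—more concretely, for $z\in H(\alpha)$ with $\pi_X(z)\in E_j^\alpha$ I put $z$ into $V_i$ iff $\pi_X(z)\in V_i^{\alpha,j}$. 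Since the $E_j^\alpha$ are clopen in $\varphi(\alpha)$ and $H(\alpha)$ is open-and-closed issues are controlled by clopenness of $\pi_X^{-1}(E_j^\alpha)$ within $H(\alpha)$, this is continuous; glueing over all $\alpha\neq\mu$ together with $V_i'$ over $H(\mu)$ gives open sets $V_i$ in $Z$, because a base neighbourhood of a point of $H(\mu)$ only involves cofinitely many $\alpha$ and the $V_i'$ part. One checks $\mathcal{V}=(V_0,\dots,V_k)$ is a $K$-tuple: a nonempty intersection $\bigcap_{i\in I}V_i$ either meets $H(\mu)$, where it reduces to $\bigcap_{i\in I}U_i\neq\emptyset$ so $\{e_i\}_{i\in I}$ spans a simplex of $K$, or meets some $H(\alpha)$, where over the relevant clopen piece it is governed by a single $K$-neighbourhood $\mathcal{V}^{\alpha,j}$ of $\mathcal{U}$, hence again a $K$-tuple. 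Finally $Z\setminus\bigcup\mathcal{V}$: over $H(\alpha)$, $\alpha\neq\mu$, the $V_i$ cover $H(\alpha)$ by construction (the $E_i^\alpha$ cover $\varphi(\alpha)$), so nothing is left; over $H(\mu)$ we get exactly $H(\mu)\setminus\pi_X^{-1}(\bigcup\mathcal{U})$. That proves the first assertion.

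For the second assertion, assume $K=\partial\Delta^k$ with $k\geq2$ and $\cl\mathcal{U}$ a $\partial\Delta^k$-tuple. By Lemma \ref{obs}, $\dobs{\partial\Delta^k}\mathcal{V}=\bigcap_{0\le i\le k}\cl\bigl(\bigcap_{j\neq i}V_j\bigr)$, so I must show this intersection is empty. Suppose $z$ lay in it. If $z\in H(\mu)$: then $z\in\cl\bigl(\bigcap_{j\neq i}V_j\bigr)$ for every $i$, and since near $H(\mu)$ the sets $V_j$ look like $\pi_X^{-1}(U_j)$ up to cofinitely many fibres, a short argument (using that $\pi_X|H(\mu)$ is a homeomorphism and that each $V_j$ over the ``good'' clopen $A$ of $A_\mathfrak{m}$ equals $\pi_X^{-1}(U_j)$) forces $\pi_X(z)\in\cl(\bigcap_{j\neq i}U_j)$ for all $i$, i.e.\ $\pi_X(z)\in\dobs{\partial\Delta^k}\mathcal{U}=\emptyset$, a contradiction—but I should double-check whether $\cl(\bigcap_{j\ne i}V_j)$ near $H(\mu)$ really only sees $U_j$; I think it does provided we arranged, as we may, that $\bigcap_{j\ne i}V_j$ is contained in $\pi_X^{-1}(\bigcap_{j\ne i}\cl W_j)$, using $\cl\mathcal{U}$ is a tuple. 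If instead $z\in H(\alpha)$, $\alpha\neq\mu$: over the clopen piece containing $z$, $\mathcal{V}$ restricted there is, up to the product with $Y$, the pullback of a single $\partial\Delta^k$-neighbourhood of $\mathcal{U}$ whose closure we may take to be a $\partial\Delta^k$-tuple (apply the corollary to \cite[Theorem 7.1.4]{eng-89} quoted after Lemma \ref{obs} to shrink), and the intersection of $k$ of its closures is empty by Lemma \ref{obs} applied on $X$ together with $k\ge2$; pulling back and producting with $Y$ keeps it empty. Hence $\dobs{\partial\Delta^k}\mathcal{V}=\emptyset$.

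\textbf{Main obstacle.} The delicate point is the bookkeeping in the second half: ensuring that the enlargements $V_i$ can be simultaneously arranged so that (i) $\cl\mathcal{V}$ behaves like a $\partial\Delta^k$-tuple over each $H(\alpha)$—which needs one extra shrinking via the Engelking corollary \emph{before} forming the clopen decomposition of $\varphi(\alpha)$—and (ii) the closures $\cl(\bigcap_{j\ne i}V_j)$ near $H(\mu)$ do not pick up stray points from the enlargements over the cofinitely many exceptional fibres. Both are handled by first replacing $\mathcal{U}$ by a $K$-neighbourhood $\mathcal{U}'$ with $\cl\mathcal{U}'$ a $\partial\Delta^k$-tuple (still with empty obstruction set, by Lemma \ref{smaller-partition} or directly) and controlling everything inside $\pi_X^{-1}(\bigcup\cl\mathcal{U}')$; the role of $k\geq2$ is exactly that on a $\partial\Delta^k$-tuple with $k\ge2$ the $(k+1)$-fold—indeed any $k$-fold with the right index pattern—closure-intersections from Lemma \ref{obs} vanish, which is false for $k=1$.
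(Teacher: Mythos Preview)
Your argument for the first assertion is correct, though more elaborate than necessary. The paper does not introduce a clopen partition $\varphi(\alpha)=E_0^\alpha\cup\dots\cup E_k^\alpha$ with a separate $K$-neighbourhood $\mathcal{V}^{\alpha,j}$ over each piece. Since each $S\in\mathcal{S}_X$ is metrisable and $\dobs{K}(\mathcal{U}|S)=\emptyset$, Lemma~\ref{smaller-partition} (or a direct argument) gives a \emph{single} $K$-neighbourhood $\mathcal{V}^S$ of $\mathcal{U}|S$ in $S$ that covers $S$; then
\[
V_i=\pi_X^{-1}(U_i)\cup\bigcup_{\alpha\neq\mu}\bigl(\pi_X^{-1}(V_i^{\varphi(\alpha)})\cap H(\alpha)\bigr)
\]
already does the job.

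The second assertion, however, has a genuine gap. You try to show $\dobs{\partial\Delta^k}\mathcal{V}=\emptyset$ by invoking Lemma~\ref{obs} and arguing that $z\in\cl\bigl(\bigcap_{j\ne i}V_j\bigr)$ for all $i$, with $z\in H(\mu)$, forces $\pi_X(z)\in\dobs{\partial\Delta^k}\mathcal{U}$. But the closure in $Z$ is taken through all fibres $H(\alpha)$, and over those your $V_j$'s come from arbitrary $\partial\Delta^k$-neighbourhoods $\mathcal{V}^{\alpha,j'}$ of $\mathcal{U}$; the intersections $\bigcap_{j\ne i}V_j^{\alpha,j'}$ may well contain points whose $\pi_X$-images approach $\pi_X(z)$ even when $\bigcap_{j\ne i}U_j$ does not. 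Your proposed fix (``arrange $\bigcap_{j\ne i}V_j\subset\pi_X^{-1}(\bigcap_{j\ne i}\cl W_j)$'') is not carried out, and with independent piecewise choices of $\mathcal{V}^{\alpha,j'}$ there is no reason it should hold. (Incidentally, the case $z\in H(\alpha)$, $\alpha\ne\mu$, is vacuous: $\dobs{\partial\Delta^k}\mathcal{V}\subset Z\setminus\bigcup\mathcal{V}=H(\mu)\setminus\pi_X^{-1}(\bigcup\mathcal{U})$.)

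The paper's route is quite different and does not use Lemma~\ref{obs} on $\mathcal{V}$ at all. It fixes a $\partial\Delta^k$-neighbourhood $\mathcal{W}$ of $\cl\mathcal{U}$ and, for each $S$, chooses \emph{one} index $i^S$ (with $x^S\in W_{i^S}$ whenever $x^S\in\bigcup\mathcal{W}$) and sets $W_{i^S}^S=W_{i^S}\cup(X\setminus\bigcup\mathcal{U})$, $W_j^S=U_j$ for $j\ne i^S$, then $\mathcal{V}^S=\mathcal{W}^S|S$. So over every fibre only a single coordinate is enlarged, and always in the same controlled way. With this explicit form the paper shows directly that each $z\in H(\mu)\setminus\pi_X^{-1}(\bigcup\mathcal{U})$ is not an obstruction point: it enlarges one coordinate $V_{i^x}$ by $\pi_X^{-1}(N^x)$ (where $N^x$ is $W_{i^x}$ or $X\setminus\bigcup\cl\mathcal{U}$) and verifies, fibre by fibre, that the resulting $(k+1)$-tuple still has empty intersection. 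That verification reduces to checking that a certain auxiliary tuple $\mathcal{N}^x$ in $X$ is a $\partial\Delta^k$-tuple, and it is precisely in this step---where up to three indices $i$, $i^S$, $i^x$ interact---that $k\ge 2$ is used.
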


\begin{proof}
Each $S\in\mathcal{S}_X$ is metrisable, and by Lemma \ref{smaller-partition}, the $K$-tuple $\mathcal{U}|S=(U_0\cap S,\ldots,U_k\cap
S)$ has a $K$-neighbourhood $\mathcal{V}^S$ in~$S$ which covers $S$ (a direct proof is easy, too). Let $\alpha\neq\mu$. Then $\pi_X$
maps $H(\alpha)$ onto $S=\varphi(\alpha)$. The sets $\pi_X^{-1}(V^{\varphi(\alpha)}_i)\cap\linebreak H(\alpha)$, $i=0,\ldots,k$, form
an open $K$-cover of $H(\alpha)$. Now, the unions
$$V_i=\pi_X^{-1}(U_i)\cup{\bigcup}_{\alpha\in A_\mathfrak{m}\setminus \{\mu\}}
(\pi_X^{-1}(V^{\varphi(\alpha)}_i)\cap H(\alpha))$$ form the requested $K$-neighbourhood $\mathcal{V}$
of~$\mathcal{F}$.

Assume that $k\geq 2$, and $\cl\mathcal{U}$ is a $\partial\Delta^k$-tuple. Then there is a $\partial\Delta^k$-neighbourhood
$\mathcal{W}$ of $\cl\mathcal{U}$. Take an $S\in\mathcal{S}_X$, and let $x^S\in S$ be the limit of~$S$ if $S$ is infinite. We choose
an index $i^S\in\{0,\ldots,k\}$ so that (1) $i^S=0$ when $S$ is finite or $x^S\not\in\bigcup\mathcal{W}$, and (2) $x^S\in W_{i^S}$
when $x^S\in\bigcup\mathcal{W}$. Now, we define a $\partial\Delta^k$-cover $\mathcal{W}^S$ of $X$ by the formulas
\begin{figure}
\begin{picture}(0,129)(12,3)
\put(-118,0){\includegraphics[scale=.39]{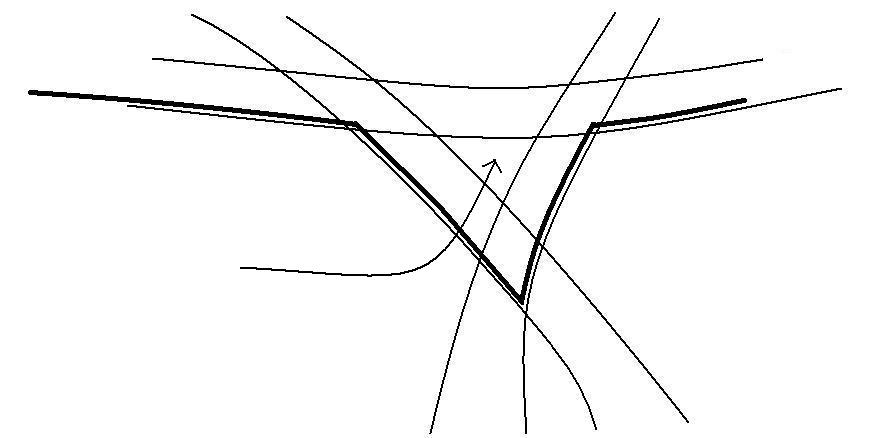}}
\put(-115,108){$W_{i^S}^S$}\put(120,105){$W_{i^S}$}\put(87,114){$U_{i^S}$}\put(10,0){$W_j$}\put(50,65){$U_j$}\put(-8,62){$U_i$}\put(67,0){$W_i$}
\put(-101,52){\framebox{$X\setminus\bigcup\mathcal{W}$}}
\end{picture}
\caption{\label{fig}Check that $\mathcal{W}^S$ and $\mathcal{N}^x$ are $\partial\Delta^k$-tuples ($i$, $j$, $i^S$ above are distinct---this is why we need $k\geq 2$).}\end{figure}
$$W^S_i =
\left\{\begin{array}{ll}
W_{i^S}\cup (X\setminus\bigcup\mathcal{U}) & \mbox{ for } i=i^S,\\
U_i &  \mbox{ for } i\neq i^S
\end{array}\right.$$
(see Figure \ref{fig}; in general, $W_{i^S}^S$ is not open!), and we put $\mathcal{V}^S=\mathcal{W}^S|S$. Since $x^S$ is the unique non-isolated point of an infinite $S$, it is easily seen that
$V^S_{i^S}=S\cap W_{i^S}^S$ is open in $S$. Hence, $\mathcal{V}^S$ is a $\partial\Delta^k$-neighbourhood of $\mathcal{U}|S$. We define $V_i$'s and $\mathcal{V}$ by the same formula as in the first
paragraph of this proof.

There remains to prove that $\mathcal{V}$ has an empty $\dobs{\partial\Delta^k}\mathcal{V}$. If $z\in Z(X,Y)\setminus\bigcup\mathcal{V}\subset H(\mu)$, then $x=\pi_X(z)\in
X\setminus\bigcup\mathcal{U}$. There are two cases. (A)~When $x\in W_{i^x}$ for some index $i^x$, we put $N^x=W_{i^x}$. (B)~When $x\not\in\bigcup\mathcal{W}$, we put
$N^x=X\setminus\bigcup\cl\mathcal{U}$ and $i^x=0$. Thus, $N^x$ is an open neighbourhood of~$x$, and the sets
$$V_i' =
\left\{\begin{array}{ll}
V_i\cup \pi_X^{-1}(N^x) & \mbox{ for } i=i^x,\\
V_i &  \mbox{ for } i\neq i^x
\end{array}\right.$$
are open in $Z(X,Y)$. We are to show that their intersection is empty. In order to check that $H(\mu)\cap\bigcap_{\,i=0}^{\,k} V_i'=\emptyset$, observe that $\pi_X(H(\mu)\cap V_i')$ is either
$U_{i^x}\cup N^x$ for $i=i^x$ or $U_i$ for $i\neq i^x$. These $k+1$ subsets of $X$ do not intersect in both cases (A) and (B), and we are done. When $\alpha\ne\mu$ and $S=\varphi(\alpha)$, we have
$H(\alpha)\cap V_i'=$ $H(\alpha)\cap\pi_X^{-1}(N_i^x)$, where
$$N^x_i =
\left\{\begin{array}{ll}
W_{i^x}^S\cup N^x & \mbox{ for } i=i^x, \\
W_i^S &  \mbox{ for } i\neq i^x
\end{array}\right\}=\left\{\begin{array}{ll}
U_i\cup N^x & \mbox{ if } i=i^x\neq i^S,\\
W_{i^S}\cup (X\setminus\bigcup\mathcal{U}) & \mbox{ if } i=i^S,\\
U_i &  \mbox{ for } i\not\in\{i^S,i^x\}.
\end{array}\right.$$
One checks that $\mathcal{N}^x=(N_0^x,\ldots,N_k^x)$ is a $\partial\Delta^k$-tuple in both cases (A) and~(B), and hence
$H(\alpha)\cap\bigcap_{\,i=0}^{\,k} V_i'=\emptyset$. Now, we infer that the sets $V_i'$ form a $\partial\Delta^k$-neigh\-bour\-hood
of~$\mathcal{V}$. Finally, $z\not\in\dobs{\partial\Delta^k}\mathcal{V}$ because $z\in V'_{i^x}\subset\bigcup_{i=0}^k V_i'$.
\end{proof}

As $Z(X,Y)$ contains homeomorphic copies of both $X$ and $Y$, we immediately obtain the inequality $\max\{\dInd{K} X,\dInd{K}
Y\}\leq\dInd{K} Z(X,Y)$. The following theorem contains upper bounds of $\dInd{K} Z(X,Y)$.

\begin{theo}\label{Indleq1}
Suppose that\/ $X$ and\/ $Y$ are non-empty compact spaces. Then $$\dInd{K} Z(X,Y)\leq\max\{\dInd{K} X+1,\dInd{K} Y\}.$$

If moreover $\dstr{K} X=0$, then $$\dInd{K} Z(X,Y)=\max\{\dInd{K} X,\dInd{K} Y\}.$$

If $k\geq 2$ and $\dInd{\partial\Delta^k}Y< \dInd{\partial\Delta^k} X+1=\dInd{\partial\Delta^k} Z(X,Y)$ then $$\dstr{\partial\Delta^k} Z(X,Y)=0.$$
\end{theo}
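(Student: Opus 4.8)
The plan is to handle the three assertions in turn, in each case starting from a closed $K$-tuple $\mathcal{F}$ of $Z=Z(X,Y)$, using Lemma~\ref{proj-tuple} to pass to a clopen $A\subset A_\mathfrak{m}$ with $\mu\in A$ and $A_\mathfrak{m}\setminus A$ finite over which $\pi_X$ carries $\mathcal{F}$ to a closed $K$-tuple $\mathcal{C}=\pi_X(\mathcal{F}|\pi_{A_\mathfrak{m}}^{-1}(A))$ of $X$, then treating the clopen part $\pi_{A_\mathfrak{m}}^{-1}(A)$ together with the copy $H(\mu)\cong X$ by a single $K$-neighbourhood built from a $K$-neighbourhood of $\mathcal{C}$, and patching the finitely many remaining fibres $H(\alpha)$, $\alpha\notin A$, separately; here each $H(\alpha)\cong\varphi(\alpha)\times Y$ with $\dim\varphi(\alpha)=0$, so $\dInd{K}H(\alpha)=\dInd{K}Y$ and $\dstr{K}H(\alpha)=\dstr{K}Y$ by Theorem~\ref{zero-product}. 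I assume throughout that the dimensions in sight are finite, the other cases being immediate (inequalities trivial, and the $\dstr{\partial\Delta^k}$-statement holding by the stated convention); note $X,Y\neq\emptyset$ gives $\dInd{K}X,\dInd{K}Y\geq 0$, so in the third assertion $\dInd{\partial\Delta^k}Z=\dInd{\partial\Delta^k}X+1\geq 1$. For the first assertion I would mimic the proof of Lemma~\ref{ANR-leq} line by line, reading $K$-tuples for maps into $L$: choose a $K$-neighbourhood $\mathcal{W}$ of $\pi_X(\mathcal{F}|H(\mu))$ in $X$ with $\cl\mathcal{W}$ a $K$-tuple (so $X\setminus\bigcup\mathcal{W}$ is closed in $X$ and $\dInd{K}(X\setminus\bigcup\mathcal{W})\leq\dInd{K}X<\dInd{K}X+1$), get $A$ from Lemma~\ref{proj-tuple} so that $F_i\cap\pi_{A_\mathfrak{m}}^{-1}(A)\subset\pi_X^{-1}(W_i)$, take $K$-partitions $P_\alpha$ for $\mathcal{F}|H(\alpha)$ with $\dInd{K}P_\alpha<\dInd{K}Y$ on the finitely many fibres $\alpha\notin A$, and on the fibres $\alpha\in A\setminus\{\mu\}$ spread $\pi_X^{-1}(\mathcal{W})$ across $H(\alpha)$ as in the proof of Lemma~\ref{big-half} (using that $\varphi(\alpha)$ is a zero-dimensional compactum and $F_i\cap H(\alpha)\subset\pi_X^{-1}(W_i)$), leaving there a $K$-partition of $\dInd{K}<\dInd{K}Y$ when $H(\alpha)$ cannot be covered outright. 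Assembling these into a closed $K$-partition for $\mathcal{F}$ one obtains $\dInd{K}<\max\{\dInd{K}X+1,\dInd{K}Y\}$; the delicate point, exactly as in Lemma~\ref{ANR-leq}, is that the $\mathfrak{m}$ many fibre pieces accumulate only onto $H(\mu)$, so that their union with the $H(\mu)$-piece stays closed (which is why one keeps those fibre partitions inside $\pi_X^{-1}(X\setminus\bigcup\mathcal{W})$).

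For the second assertion, ``$\dInd{K}Z\geq\max\{\dInd{K}X,\dInd{K}Y\}$'' is already recorded; write $\gamma$ for this maximum and take a closed $K$-tuple $\mathcal{F}$ of $Z$, with $A$ and $\mathcal{C}$ as above. Since $\dstr{K}X=0$, the closed $K$-tuple $\mathcal{C}$ of $X$ has a $K$-neighbourhood $\mathcal{U}$ with $\dobs{K}\mathcal{U}=\emptyset$ and $\dInd{K}(X\setminus\bigcup\mathcal{U})<\dInd{K}X\leq\gamma$ (when $\dInd{K}X=0$ just take $\mathcal{U}$ covering $X$, which then has $\dobs{K}\mathcal{U}=\emptyset$); as $\mathcal{U}$ is a $K$-neighbourhood of $\mathcal{C}$ we have $F_i\cap\pi_{A_\mathfrak{m}}^{-1}(A)\subset\pi_X^{-1}(U_i)$. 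Now Lemma~\ref{big-half} (first part, applicable because $\dobs{K}\mathcal{U}=\emptyset$) yields a $K$-neighbourhood $\mathcal{V}$ of $\pi_X^{-1}(\mathcal{U})$ in $Z$ with $Z\setminus\bigcup\mathcal{V}=H(\mu)\setminus\pi_X^{-1}(\bigcup\mathcal{U})$, which is a $K$-neighbourhood of $\mathcal{F}$ over the clopen set $\pi_{A_\mathfrak{m}}^{-1}(A)$; on each of the finitely many remaining clopen fibres $H(\alpha)$, $\alpha\notin A$, take a $K$-partition $P_\alpha$ for $\mathcal{F}|H(\alpha)$ with $\dInd{K}P_\alpha<\dInd{K}H(\alpha)=\dInd{K}Y\leq\gamma$. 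Patching $\mathcal{V}$ with the fibre $K$-neighbourhoods corresponding to the $P_\alpha$ — legitimate since $\pi_{A_\mathfrak{m}}^{-1}(A)$ and the $H(\alpha)$ are clopen and so assemble coordinatewise into a $K$-tuple of $Z$ containing $\mathcal{F}$ — produces a $K$-neighbourhood $\mathcal{V}'$ of $\mathcal{F}$ with $Z\setminus\bigcup\mathcal{V}'=\bigl(H(\mu)\setminus\pi_X^{-1}(\bigcup\mathcal{U})\bigr)\cup\bigcup_{\alpha\notin A}P_\alpha$, a finite union of closed sets lying in pairwise disjoint clopen parts of $Z$, each of $\dInd{K}<\gamma$ (the first being $\cong X\setminus\bigcup\mathcal{U}$); hence $\dInd{K}(Z\setminus\bigcup\mathcal{V}')<\gamma$ and $\dInd{K}Z\leq\gamma$.

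The third assertion is the same patching argument, now also tracking obstruction points. Put $\nu=\dInd{\partial\Delta^k}Z=\dInd{\partial\Delta^k}X+1$, so $\dInd{\partial\Delta^k}Y\leq\dInd{\partial\Delta^k}X=\nu-1$ and $\nu\geq 1$; given a closed $\partial\Delta^k$-tuple $\mathcal{F}$ of $Z$ I must produce a $\partial\Delta^k$-neighbourhood $\mathcal{V}'$ of $\mathcal{F}$ with $\dobs{\partial\Delta^k}\mathcal{V}'=\emptyset$ and $\dInd{\partial\Delta^k}(Z\setminus\bigcup\mathcal{V}')<\nu$. With $A$ and $\mathcal{C}$ as above, choose a $\partial\Delta^k$-neighbourhood $\mathcal{U}$ of $\mathcal{C}$ in $X$ with $\cl\mathcal{U}$ a $\partial\Delta^k$-tuple; a short computation from Lemma~\ref{obs} gives $\dobs{\partial\Delta^k}\mathcal{U}\subset\bigcap_{i=0}^k\cl U_i=\emptyset$, so both parts of Lemma~\ref{big-half} apply (here $k\geq 2$) and yield a $\partial\Delta^k$-neighbourhood $\mathcal{V}$ of $\pi_X^{-1}(\mathcal{U})$ with $Z\setminus\bigcup\mathcal{V}=H(\mu)\setminus\pi_X^{-1}(\bigcup\mathcal{U})$ and $\dobs{\partial\Delta^k}\mathcal{V}=\emptyset$. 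On each clopen fibre $H(\alpha)$, $\alpha\notin A$, pick a $\partial\Delta^k$-neighbourhood $\mathcal{V}^{(\alpha)}$ of $\mathcal{F}|H(\alpha)$ in $H(\alpha)$ with $\cl\mathcal{V}^{(\alpha)}$ a $\partial\Delta^k$-tuple, whence $\dobs{\partial\Delta^k}\mathcal{V}^{(\alpha)}=\emptyset$ and $H(\alpha)\setminus\bigcup\mathcal{V}^{(\alpha)}$ is closed in $H(\alpha)$, of $\dInd{\partial\Delta^k}\leq\dInd{\partial\Delta^k}H(\alpha)=\dInd{\partial\Delta^k}Y\leq\nu-1$. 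Patching $\mathcal{V}$ with the $\mathcal{V}^{(\alpha)}$ over the clopen decomposition $Z=\pi_{A_\mathfrak{m}}^{-1}(A)\cup\bigcup_{\alpha\notin A}H(\alpha)$ gives $\mathcal{V}'$, with $Z\setminus\bigcup\mathcal{V}'=\bigl(H(\mu)\setminus\pi_X^{-1}(\bigcup\mathcal{U})\bigr)\cup\bigcup_{\alpha\notin A}\bigl(H(\alpha)\setminus\bigcup\mathcal{V}^{(\alpha)}\bigr)$ of $\dInd{\partial\Delta^k}\leq\nu-1<\nu$ (the first summand being $\cong X\setminus\bigcup\mathcal{U}$, closed in $X$), while $\dobs{\partial\Delta^k}\mathcal{V}'=\emptyset$ because, by Lemma~\ref{obs}, $\dobs{\partial\Delta^k}$ is computed from coordinatewise closures and therefore localizes over the clopen pieces of $Z$, on each of which it vanishes. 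Thus every closed $\partial\Delta^k$-tuple of $Z$ has such a $\mathcal{V}'$, i.e.\ $\dstr{\partial\Delta^k}Z=0$. I expect the genuine work to be in the first assertion's fibre bookkeeping (as in Lemma~\ref{ANR-leq}); granting that inequality and Lemma~\ref{big-half}, the point that makes the second and third assertions go through is that Lemma~\ref{big-half} pushes the whole residual partition into $H(\mu)\cong X$, so it costs only $\dInd{K}X$, leaving merely the finitely many bad fibres (contributing $\dInd{K}Y$) to patch, and — for $\partial\Delta^k$ — the inclusion $\dobs{\partial\Delta^k}\mathcal{U}\subset\bigcap_i\cl U_i$ makes the required emptiness of $\dobs{\partial\Delta^k}$ a free consequence of $\cl\mathcal{U}$ being a $\partial\Delta^k$-tuple.
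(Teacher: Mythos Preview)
Your treatment of the second and third assertions is correct and matches the paper's proof essentially line for line. For the first assertion, however, you overcomplicate matters by proposing to mimic Lemma~\ref{ANR-leq}: the paper handles all three parts by the \emph{same} mechanism, and the first is no harder than the second. Once you have chosen a $K$-neighbourhood $\mathcal{U}$ of $\mathcal{C}=\pi_X(\mathcal{F}|\pi_{A_\mathfrak{m}}^{-1}(A))$ with $\cl\mathcal{U}$ a $K$-tuple (so $\dobs{K}\mathcal{U}=\emptyset$, as you yourself observe for the third part), the first half of Lemma~\ref{big-half}, applied inside $\pi_{A_\mathfrak{m}}^{-1}(A)$ (which is itself a $Z(X,Y)$), yields a $K$-neighbourhood of $\mathcal{F}|\pi_{A_\mathfrak{m}}^{-1}(A)$ whose corresponding partition is exactly $H(\mu)\setminus\pi_X^{-1}(\bigcup\mathcal{U})\cong X\setminus\bigcup\mathcal{U}$, of $\dInd{K}\leq\dInd{K}X$. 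Every fibre $H(\alpha)$ with $\alpha\in A\setminus\{\mu\}$ is covered outright, so your clause ``when $H(\alpha)$ cannot be covered outright'' never occurs, and none of the accumulation bookkeeping of Lemma~\ref{ANR-leq} is needed. What remains is only the finitely many clopen fibres with $\alpha\notin A$, handled exactly as you say. In short, the paper's proof of the first assertion is your proof of the second, with the single change that one does not demand $\dInd{K}(X\setminus\bigcup\mathcal{U})<\dInd{K}X$ but settles for $\leq\dInd{K}X$.
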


\begin{proof}
Take a closed $K$-tuple $\mathcal{F}$ of $Z=Z(X,Y)$. Lemma \ref{proj-tuple} yields a set $A\subset A_\mathfrak{m}$ such that $\mu\in
A$, $A_\mathfrak{m}\setminus A$ is finite, and $\pi_X(\mathcal{F}|\pi^{-1}_{A_\mathfrak{m}}(A))$ is a closed $K$-tuple of~$X$. Then
there is a $K$-neighbourhood $\mathcal{U}$ of $\pi_X(\mathcal{F}|\pi^{-1}_{A_\mathfrak{m}}(A))$ such that $\cl\mathcal{U}$ is a
$K$-tuple. Clearly $\dobs{K}\mathcal{U}=\emptyset$, and writing $P=X\setminus\bigcup\mathcal{U}$, we obtain $\dInd{K} P\leq\dInd{K}
X$. As $A_\mathfrak{m}\setminus A$ is finite, we can think that $\pi^{-1}_{A_\mathfrak{m}}(A)$ is a $Z(X,Y)$. Hence by
Lemma~\ref{big-half}, $\pi_X^{-1}(\mathcal{U})|\pi_{A_\mathfrak{m}}^{-1}(A)$ has a $K$-neighbourhood $\mathcal{V}$ in
$\pi_{A_\mathfrak{m}}^{-1}(A)$ with the corresponding $K$-partition $Q=H(\mu)\setminus\pi_X^{-1}(\bigcup\mathcal{U})$. Thus, $Q$~is a
$K$-partition in $\pi^{-1}_{A_\mathfrak{m}}(A)$ for $\mathcal{F}|\pi^{-1}_{A_\mathfrak{m}}(A)$. As $\pi_X|Q$ is a homeomorphism onto
$P$, we have $\dInd{K} Q\leq\dInd{K} X$. On the other hand, $H(\alpha)$ is homeomorphic to $\varphi(\alpha)\times Y$ for
$\alpha\neq\mu$, and $\dInd{K} H(\alpha)=\dInd{K} Y$ by Theorem \ref{zero-product}(a). For each $\alpha\not\in A$, in
$H(\alpha)=\pi_{A_\mathfrak{m}}^{-1}(\alpha)$ there is a $K$-neigh\-bour\-hood $\mathcal{W}^{\,\alpha}$ of
$\mathcal{F}|\pi_{A_\mathfrak{m}}^{-1}(\alpha)$ such that
$R^{\,\alpha}=\pi_{A_\mathfrak{m}}^{-1}(\alpha)\setminus\bigcup\mathcal{W}^{\,\alpha}$ has $\dInd{K}R^{\,\alpha}<\dInd{K}Y$. Since
$A_\mathfrak{m}\setminus A$ is finite, the union
\begin{equation}\label{2}
R=\left( H(\mu)\setminus\pi_X^{-1}\left(\bigcup\mathcal{U}\right)\right)\cup{\bigcup}_{\alpha\in A_{\mathfrak{m}}\setminus A}
R^{\,\alpha}
\end{equation}
is a $K$-partition for $\mathcal{F}$, and $\dInd{K} R<\max\{\dInd{K} X+1,\dInd{K} Y\}$. We have shown the first inequality of the
theorem's assertion.

In the case when $\dstr{K} X=0$, only a slight modification of the above proof is needed. Indeed, we do not need the $K$-tuple
$\cl\mathcal{U}$, but instead, $\pi_X(\mathcal{F}|\pi^{-1}_{A_\mathfrak{m}}(A))$ has a $K$-neighbourhood $\mathcal{U}$ such that
$\dobs{K}\mathcal{U}=\emptyset$ and the corresponding $K$-partition\linebreak $P$ satisfies the inequality $\dInd{K}P<\dInd{K} X$. At
the end, we obtain $\dInd{K} R<\max\{\dInd{K} X,\dInd{K} Y\}$ and $\dInd{K} Z\leq\max\{\dInd{K} X,\dInd{K} Y\}$.

If $k\geq 2$, $K=\partial\Delta^k$, and $\dInd{\partial\Delta^k}Y< \dInd{\partial\Delta^k} X+1=\dInd{\partial\Delta^k} Z$, then we
make another modification. We take $\mathcal{U}$ with the $\partial\Delta^k$-tuple $\cl\mathcal{U}$, and Lemma \ref{big-half} yields
$\mathcal{V}$ with $\dobs{\partial\Delta^k}\mathcal{V}=\emptyset$. As $\dInd{\partial\Delta^k}Y< \dInd{\partial\Delta^k} X+1$, for
$\alpha\not\in A$ we can take any $\partial\Delta^k$-neighbourhood $\mathcal{W}^{\,\alpha}$ in $\pi_{A_\mathfrak{m}}^{-1}(\alpha)$ of
$\mathcal{F}|\pi_{A_\mathfrak{m}}^{-1}(\alpha)$ with $\cl\mathcal{W}^{\,\alpha}$ being a $\partial\Delta^k$-tuple, in addition. Then
$\dobs{\partial\Delta^k} \mathcal{W}^{\,\alpha}=\emptyset$ and $\dInd{\partial\Delta^k} R<\dInd{\partial\Delta^k} X+1$. $R$~is the
corresponding $\partial\Delta^k$-partition of the open $\partial\Delta^k$-tuple which consists of the sets $V_i\cup\bigcup_{\alpha\in
A_{\mathfrak{m}}\setminus A}W^{\alpha}_i$ for $i=0,\ldots,k$, and which does not have $\partial\Delta^k$-obstruction points. This
completes the proof of the equality $\dstr{\partial\Delta^k} Z=0$.
\end{proof}

Proposition \ref{big-space2} and Theorem \ref{Indleq1} yield

\begin{cor}\label{weak}
Let $k\geq 2$ and $n\geq 1$. If $X$ is a compact metric space such that\/ $\dim X=kn$, then
$$\dInd{\partial\Delta^k} Z(X,Y)=\max\{n,\dInd{\partial\Delta^k} Y\}$$
for every non-empty compact space $Y$.\qed
\end{cor}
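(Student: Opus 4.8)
The plan is to obtain the corollary by feeding Theorem \ref{Indleq1} the right structural information about $X$, so that its equality clause applies. First I would invoke Proposition \ref{big-space2}: since $X$ is a compact metric space with $\dim X = kn$, and $k\geq 2$, $n\geq 1$, that proposition yields simultaneously $\dInd{\partial\Delta^k} X = n$ and, which is the essential point, $\dstr{\partial\Delta^k} X = 0$.

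Next I would apply the middle assertion of Theorem \ref{Indleq1} with $K=\partial\Delta^k$. Its hypotheses are met verbatim: $X$ and $Y$ are non-empty compact spaces and $\dstr{\partial\Delta^k} X = 0$. Hence
$$\dInd{\partial\Delta^k} Z(X,Y) = \max\{\dInd{\partial\Delta^k} X,\ \dInd{\partial\Delta^k} Y\} = \max\{n,\ \dInd{\partial\Delta^k} Y\},$$
which is precisely the claimed equality (this remains valid if $\dInd{\partial\Delta^k} Y=\infty$, since then both sides are $\infty$).

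There is essentially no obstacle here: all the work sits in Proposition \ref{big-space2} (the computation that a compact metric space of covering dimension $kn$ is weakly $n$-dimensional for $\partial\Delta^k$) and in the ``$\dstr{K}X=0$'' clause of Theorem \ref{Indleq1}. The only point worth noting explicitly is that $Y$ is required to be merely a non-empty compact space, with no metrisability or Fr\'echet hypothesis; this is harmless, because neither Proposition \ref{big-space2} nor the relevant clause of Theorem \ref{Indleq1} imposes any condition on the second factor, so the corollary holds in the stated generality.
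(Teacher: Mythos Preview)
Your argument is correct and matches the paper's own derivation exactly: the corollary is stated with a \qed and only the line ``Proposition \ref{big-space2} and Theorem \ref{Indleq1} yield'' as justification, and you have spelled out precisely this, first using Proposition \ref{big-space2} to obtain $\dInd{\partial\Delta^k}X=n$ and $\dstr{\partial\Delta^k}X=0$, then the $\dstr{K}X=0$ clause of Theorem \ref{Indleq1} to conclude.
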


\begin{cor}\label{example2}
Let $k\geq 2$ and $n\geq 1$. If\/ $C$ is a metric continuum with $\dim C=kn$, then $X_C=Z(C,C)$ is a compact Fr\'echet space such that
\begin{lister}
\item $\ddim{\partial\Delta^k} X_C=\dInd{\partial\Delta^k} X_C=n$,
\item $\dInd{|\partial\Delta^k|} X_C=n+1$, and
\item each component of $X_C$ is homeomorphic to $C$.
\end{lister}
\end{cor}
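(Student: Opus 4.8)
The plan is to assemble the conclusion from the machinery of Sections~\ref{general}--\ref{fifth}; essentially no new argument is needed. First I would record the basic facts about $X_C=Z(C,C)$. Since $C$ is a metric continuum it is compact, metrisable, and in particular Fr\'echet, so by Proposition~\ref{general-zxy} the space $X_C$ is compact and Fr\'echet, and every component of $X_C$ is homeomorphic to a component of~$C$; as $C$ is connected it is its own only component, hence each component of $X_C$ is homeomorphic to~$C$, which is~(c). Next I would pin down the dimensions of the continuum $C$ itself: from $\dim C=kn$ and Theorem~\ref{equiv} one gets both $\ddim{|\partial\Delta^k|} C<n+1$ and $\ddim{|\partial\Delta^k|} C\not<n$, hence $\ddim{|\partial\Delta^k|} C=n$; since $C$ is metrisable, Theorem~\ref{gen-relations} forces all four of the dimensions of $C$ to equal~$n$, so in particular $\dInd{\partial\Delta^k} C=\dInd{|\partial\Delta^k|} C=n$. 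Finally, Proposition~\ref{big-space2} (here $k\geq 2$ and $\dim C=kn$ are used) gives $\dInd{\partial\Delta^k} C=n$ \emph{together with} $\dstr{\partial\Delta^k} C=0$.

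For~(a), the covering-type equality $\ddim{\partial\Delta^k} X_C=n$ follows from Theorem~\ref{compon1} (or Lemma~\ref{ddim}) applied with $L=|\partial\Delta^k|$: all components of $X_C$ are copies of~$C$, so $\ddim{|\partial\Delta^k|} X_C=\ddim{|\partial\Delta^k|} C=n$, and $\ddim{\partial\Delta^k} X_C=\ddim{|\partial\Delta^k|} X_C$ by Theorem~\ref{gen-relations}. For the inductive part I would invoke Corollary~\ref{weak} with $X=Y=C$: since $\dim C=kn$ it yields $\dInd{\partial\Delta^k} X_C=\max\{n,\dInd{\partial\Delta^k} C\}=n$ (equivalently, apply the second assertion of Theorem~\ref{Indleq1} directly, using $\dstr{\partial\Delta^k} C=0$). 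Together with the covering part this establishes~(a).

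For~(b) I would take $L=|\partial\Delta^k|$, which is homeomorphic to the sphere $S^{k-1}$ and hence is a compact metric ANR, non-contractible because $k\geq 2$, and apply Theorem~\ref{Ind-raise3} with $X=Y=C$. The hypothesis $\dInd{L} X=\dInd{L} Y$ is trivial, both being~$n$; it remains to verify $\dInd{L} K(C)\geq 1$. This is exactly what Lemma~\ref{kc} supplies: $C$ is a separable metric space with $\dInd{L} C=n\geq 1$, so $K(C)\neq\emptyset$ and $\dInd{L}_{b+} K(C)=n$ for every $b\in K(C)$, whence $\dInd{L} K(C)\geq n\geq 1$. Theorem~\ref{Ind-raise3} then gives $\dInd{|\partial\Delta^k|} X_C=\dInd{|\partial\Delta^k|} Z(C,C)=\dInd{|\partial\Delta^k|} C+1=n+1$, which is~(b).

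There is no real obstacle once the lemmas of the previous sections are in hand; the only points needing care are bookkeeping ones. One must use the metrisability of $C$ (via Theorem~\ref{gen-relations}) to know $\dInd{|\partial\Delta^k|} C=n$ and not merely $\leq n$, and one must keep clearly in view that it is precisely the vanishing $\dstr{\partial\Delta^k} C=0$ (Proposition~\ref{big-space2}) that makes $\dInd{\partial\Delta^k}$ stay equal to~$n$ under the operation $Z(\cdot,\cdot)$, whereas $\dInd{|\partial\Delta^k|}$ climbs to~$n+1$; this contrast is what turns $X_C$ into a counterexample to $\dInd{K}=\dInd{|K|}$ outside the class of hereditarily normal spaces.
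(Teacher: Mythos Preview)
Your proof is correct and follows essentially the same route as the paper's own proof: Proposition~\ref{general-zxy} for (c), Theorems~\ref{gen-relations} and~\ref{equiv} for the dimensions of~$C$, Lemma~\ref{ddim} and Corollary~\ref{weak} for~(a), and Theorem~\ref{Ind-raise3} together with Lemma~\ref{kc} for~(b). The only cosmetic remark is that your parenthetical ``non-contractible because $k\geq 2$'' could be misread; in fact $|\partial\Delta^k|\cong S^{k-1}$ is non-contractible for every $k\geq 1$, so the standing assumption on~$L$ is satisfied regardless.
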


\begin{proof}
It follows from Proposition \ref{general-zxy} that $X_C$ is a compact Fr\'echet space that satisfies the statement~(c).

All four of the dimensions $\ddim{\partial\Delta^k}$, $\ddim{|\partial\Delta^k|}$, $\dInd{\partial\Delta^k}$, and
$\dInd{|\partial\Delta^k|}$ of $C$ are equal to $n$ by Theorems \ref{gen-relations} and \ref{equiv}. Now, the statements
\ref{gen-relations}, \ref{ddim}, and \ref{weak} imply (a). The statement (b) results from \ref{Ind-raise3} and \ref{kc}.
\end{proof}

Since any simplicial complex $K$ is a triangulation of the polyhedron $|K|$, we may restate Fedorchuk's \cite[Question 3.1]{fed-10b}
as follows: {\em Are the dimensions $\dInd{K}$ and\/ $\dInd{|K|}$ equal for arbitrary normal spaces?} The foregoing corollary shows
that the answer is no. In the simplest case---for $k=2$, $n=1$, and $[0,1]^2$---we obtain $\dInd{\partial\Delta^2}
Z([0,1]^2,[0,1]^2)=1<2=\dInd{|\partial\Delta^2|} Z([0,1]^2,[0,1]^2)$.

The two above corollaries show that if we take a $kn$-dimensional compact metric space, then one-time use of the operation $Z(X,Y)$
does not allow us to obtain a space with $\ddim{\partial\Delta^k}<\dInd{\partial\Delta^k}$. We could try to iterate the operation.
However, {\em we even do not know whether $\dstr{\partial\Delta^2} Z([0,1]^2,[0,1]^2)$ is\/ $1$ or it is\/ $0$.} Let us write
$T=Z([0,1]^2,[0,1]^2)$. The values of $\dInd{\partial\Delta^2} Z(T,T)$ and $\dInd{\partial\Delta^2}Z(T,[0,1]^2)$ remain unknown. On
the other hand, $\dInd{\partial\Delta^2} Z([0,1]^2, T)=1$.

To show that the operation $Z(X,Y)$ sometimes raises the dimension $\dInd{K}$ by one, we need the following.

\begin{lemma} \label{big-partition2}
Suppose that\/ $X$ is a compact Fr\'echet space with\/ $\dInd{\partial\Delta^k} X=\alpha$ and\/ $\dstr{\partial\Delta^k} X=1$. Let\/
$\mathcal{F}$ be a\/ $\partial\Delta^k$-tuple in\/ $X$, where\/ $k\geq 1$. Assume that if\/ $\mathcal{U}$~is a\/
$\partial\Delta^k$-neighbourhood of\/ $\mathcal{F}$, and the corresponding\/ $\partial\Delta^k$-partition\/
$P=X\setminus\bigcup\mathcal{U}$ has\/ $\dInd{\partial\Delta^k} P<\alpha$, then\/ $\dobs{\partial\Delta^k}\mathcal{U}\neq\emptyset$.
Write $\mathcal{G}=\pi_X^{-1}(\mathcal{F})|H(\mu)$. If\/ $Q$~is a\/ $\partial\Delta^k$-partition in\/ $Z(X,Y)$ for\/~$\mathcal{G}$,
then one of the following conditions is satisfied:
\begin{lister}
\item $\dInd{\partial\Delta^k}(Q\cap H(\mu))=\alpha$;
\item there is an\/ $\alpha\neq\mu$ such that\/ $\varphi(\alpha)\in\mathcal{S}_X$ is infinite and\/
$\pi_{A_\mathfrak{m}}^{-1}(\alpha)\cap\pi_X^{-1}(x^{\varphi(\alpha)})\subset Q$, where\/ $x^{\varphi(\alpha)}$ is the accumulation
point of\/ $\varphi(\alpha)$ {\em (}\hspace{-.2ex}and the intersection of the point-inverses is homeomorphic to\/ $Y)$.
\end{lister}
\end{lemma}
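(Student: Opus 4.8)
The plan is to mimic the structure of the proof of Lemma~\ref{Ind-raise1}, replacing the obstruction-to-extension argument by an obstruction-to-dimension-lowering argument that uses the hypothesis $\dstr{\partial\Delta^k}X=1$ in the form stated. Fix a $\partial\Delta^k$-partition $Q\subset Z(X,Y)\setminus\bigcup\bigl(\pi_X^{-1}(\mathcal{V})\bigr)$ for $\mathcal{G}$, arising from an open $\partial\Delta^k$-tuple $\mathcal{V}=(V_0,\dots,V_k)$ of $Z=Z(X,Y)$ with $V_i\supset\pi_X^{-1}(G_i)$ and $Q=Z\setminus\bigcup\mathcal{V}$. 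Assume condition~(a) fails, i.e.\ $\dInd{\partial\Delta^k}(Q\cap H(\mu))<\alpha$; the goal is to produce the point-inverse copy of $Y$ inside $Q$ demanded by~(b). Restricting $\mathcal{V}$ to $H(\mu)$ and transporting via the homeomorphism $\pi_X|H(\mu)$, set $\mathcal{W}=\pi_X(\mathcal{V}|H(\mu))$; this is an open $\partial\Delta^k$-tuple of $X$ with $W_i\supset F_i$, and its corresponding partition $P=X\setminus\bigcup\mathcal{W}$ is carried homeomorphically onto $Q\cap H(\mu)$, so $\dInd{\partial\Delta^k}P<\alpha$. By the standing hypothesis on $\mathcal{F}$, this forces $\dobs{\partial\Delta^k}\mathcal{W}\neq\emptyset$.

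Next I would extract a concrete obstruction point. By Lemma~\ref{obs}, $\dobs{\partial\Delta^k}\mathcal{W}=\bigcap_{0\le i\le k}\cl\bigl(\bigcap_{j\neq i}W_j\bigr)$, so pick $x_0$ in this intersection. For each $i\in\{0,\dots,k\}$, the point $x_0$ lies in $\cl\bigl(\bigcap_{j\neq i}W_j\bigr)$, and since $X$ is Fr\'echet I can choose a sequence $S_i\subset\bigcap_{j\neq i}W_j$ converging to $x_0$. Put $S=\{x_0\}\cup\bigcup_{i=0}^{k}S_i\in\mathcal{S}_X$ (it is either finite or a convergent sequence with its limit, and $x_0$ is its only possible accumulation point). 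Apply Lemma~\ref{new-projection} to the open set $\bigcup\mathcal{V}$ (or to each $V_i$ in turn, taking a common $A$) to obtain $A\subset A_\mathfrak{m}$ with $\mu\in A$, $\card(A_\mathfrak{m}\setminus A)<\mathfrak{m}$, and $\pi_{A_\mathfrak{m}}^{-1}(A)\cap\pi_X^{-1}(W_i)\subset V_i$ for every $i$. Since $\card\varphi^{-1}(S)=\mathfrak{m}$, there is an $\alpha\in A\setminus\{\mu\}$ with $\varphi(\alpha)=S$; then $x^{\varphi(\alpha)}=x_0$, because a finite $S$ has no limit and an infinite $S$ accumulates only at $x_0$.

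Finally I would verify that the fibre sits inside $Q$. Fix $y\in Y$ and consider $z=\pi_1(\alpha,x_0,y)\in\pi_{A_\mathfrak{m}}^{-1}(\alpha)\cap\pi_X^{-1}(x_0)$. Suppose, for contradiction, that $z\notin Q$, i.e.\ $z\in V_{i_0}$ for some $i_0$. For each $i\neq i_0$ the sequence $S_i$ meets $\bigcap_{j\neq i}W_j$, hence in particular $S_i\subset W_{i_0}$; the points $\pi_1(\alpha,s,y)$ with $s\in S_i$ lie in $\pi_{A_\mathfrak{m}}^{-1}(A)\cap\pi_X^{-1}(W_{i_0})\subset V_{i_0}$ and converge in $Z$ to $z$, and similarly the points coming from $S_{i_0}$ converge to $z$ while lying in $V_i$ for the remaining indices — more carefully, for a fixed index $i\neq i_0$ one has $S_{i_0}\subset W_i$, so $\pi_1(\alpha,S_{i_0},y)\subset V_i$ accumulates at $z$, giving $z\in\cl V_i$ for all $i$. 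But $\mathcal{V}$ is a $\partial\Delta^k$-tuple (its members are taken to be a $\partial\Delta^k$-neighbourhood, whose closure is again a $\partial\Delta^k$-tuple, as in the corollary to \cite[Theorem~7.1.4]{eng-89} recalled above), so $\bigcap_{i=0}^{k}\cl V_i=\emptyset$, a contradiction. Hence $z\in Q$, and since $y\in Y$ was arbitrary, $\pi_{A_\mathfrak{m}}^{-1}(\alpha)\cap\pi_X^{-1}(x_0)\subset Q$; this set is homeomorphic to $Y$ because $\alpha\neq\mu$. Thus condition~(b) holds, completing the proof.

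The main obstacle I anticipate is the bookkeeping in the last step: making sure that from the single obstruction point $x_0\in\dobs{\partial\Delta^k}\mathcal{W}$ one extracts, for \emph{every} index $i$, a sequence lying in the right one of the sets $W_j$ so that the images in $H(\alpha)$ accumulate at $z$ from within $\cl V_i$ for all $i$ simultaneously — the combinatorics of which $S_\ell$ lies in which $W_j$ has to be arranged so that the closure condition $z\in\bigcap_i\cl V_i$ genuinely follows. A secondary point requiring care is the passage $\dInd{\partial\Delta^k}P<\alpha\Rightarrow\dInd{\partial\Delta^k}(Q\cap H(\mu))<\alpha$, which is immediate from $\pi_X|H(\mu)$ being a homeomorphism but must be invoked in the correct direction, and the (routine) check that $S\in\mathcal{S}_X$ and that Lemma~\ref{new-projection} applies to the open sets $V_i$.
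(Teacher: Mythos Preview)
Your overall strategy matches the paper's proof almost exactly: transport $\mathcal{V}$ to $X$ via $\pi_X|H(\mu)$, use the hypothesis on $\mathcal{F}$ to get an obstruction point $x_0$ via Lemma~\ref{obs}, pull out convergent sequences $S_i\subset\bigcap_{j\neq i}W_j$, build $S$, invoke Lemma~\ref{new-projection}, and pick $\alpha$ with $\varphi(\alpha)=S$. Up to that point your argument is the paper's argument.

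The gap is in the final contradiction. You conclude $z\in\bigcap_{i}\cl V_i$ and then assert that $\bigcap_{i}\cl V_i=\emptyset$ because ``$\cl\mathcal{V}$ is a $\partial\Delta^k$-tuple''. This is not justified: $\mathcal{V}$ is an \emph{arbitrary} $\partial\Delta^k$-neighbourhood of $\mathcal{G}$, and there is no reason its closure should again be a $\partial\Delta^k$-tuple. The corollary to \cite[Theorem~7.1.4]{eng-89} lets one \emph{choose} a $K$-neighbourhood with that property, but here $\mathcal{V}$ is given, not chosen. So the implication $z\in\bigcap_i\cl V_i\Rightarrow$ contradiction does not go through.

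The repair is already implicit in what you wrote, and it is exactly what the paper does: use that $V_{i_0}$ is \emph{open}. You have $z=\pi_1(\alpha,x_0,y)\in V_{i_0}$ and the sequence $\pi_1(\alpha,s,y)$, $s\in S_{i_0}$, converges to $z$; hence some term $\pi_1(\alpha,s,y)$ lies in $V_{i_0}$. But $s\in S_{i_0}\subset\bigcap_{j\neq i_0}W_j$ gives $\pi_1(\alpha,s,y)\in\bigcap_{j\neq i_0}V_j$ (via the inclusion $\pi_{A_\mathfrak{m}}^{-1}(A)\cap\pi_X^{-1}(W_j)\subset V_j$), so $\pi_1(\alpha,s,y)\in\bigcap_{j=0}^{k}V_j$, contradicting that $\mathcal{V}$ itself is a $\partial\Delta^k$-tuple. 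No closure is needed. A minor additional point: you should note that $S$ is genuinely infinite (so that condition~(b) is satisfied as stated); this follows because if $x_0\in\bigcap_{j\neq i}W_j$ for every $i$ then $x_0\in\bigcap_j W_j$, contradicting that $\mathcal{W}$ is a $\partial\Delta^k$-tuple, so at least one $S_i$ is a nontrivial sequence.
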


\begin{proof}
Let $\mathcal{V}$ be any $\partial\Delta^k$-neighbourhood of~$\mathcal{G}$ in $Z=Z(X,Y)$, and $Q$ the corresponding
$\partial\Delta^k$-partition. Since $\pi_X|H(\mu)$ is a homeomorphism onto~$X$, assume that $\dInd{\partial\Delta^k}(Q\cap
H(\mu))<\alpha$. Hence, $\mathcal{U}=\pi_X(\mathcal{V}|H(\mu))$ has $\emptyset\neq\dobs{\partial\Delta^k}\mathcal{U}$. By Lemma
\ref{obs}, there is a common element $x_0\in\cl({\bigcap}_{\,0\leq j\leq k,\,j\neq i} U_j)$ for $i=0,\ldots,k$. As $X$ is Fr\'echet,
for each $i$ there is an infinite sequence $S_i\subset {\bigcap}_{\,0\leq j\leq k,\,j\neq i} U_j$ that converges to $x_0$. It follows
from Lemma~\ref{new-projection} that there is a set $A\subset A_\mathfrak{m}$ with $\card (A_\mathfrak{m}\setminus A)<\mathfrak{m}$
and $\pi_{A_\mathfrak{m}}^{-1}(A)\cap \pi_X^{-1}(U_i)\subset V_i$ for each $i$. Let $S=\{x_0\}\cup\bigcup_{i=0}^k S_i$. Now, we can
find an $\alpha\in A\setminus\{\mu\}$ with $\varphi(\alpha)=S$ (because $\card\varphi^{-1}(S)=\mathfrak{m}$).
$H(\alpha)=\pi_1(\{\alpha\}\times S\times Y)$ is homeo\-mor\-ph\-ic to $S\times Y$. Fix an index~$i$ for a while, and note that
$$\pi_1 (\{\alpha\}\times S_i\times Y)=\pi_{A_\mathfrak{m}}^{-1}(\alpha)\cap\pi_X^{-1}(S_i)\subset {\bigcap}_{\,0\leq j\leq k,\,j\neq i} V_j.$$
We claim that {\em no point of\/ $\pi_1 (\{\alpha\}\times\{x_0\}\times Y)$ belongs to $V_i$.} Indeed, $S_i$ converges to~$x_0$. If we
had $\pi_1(\alpha,x_0,y)\in V_i$, then there would exist a point~$x\in S_i$ such that $\pi_1 (\alpha,x,y)\in V_i$. In consequence, the
intersection of $V_i$'s would be non-empty, and $\mathcal{V}$~would not be a $\partial\Delta^k$-tuple. Therefore, $\pi_1
(\{\alpha\}\times\{x_0\}\times Y)=\pi_{A_\mathfrak{m}}^{-1}(\alpha)\cap\pi_X^{-1}(x_0)$ does not meet $V_i$ for any $i$, and is
contained in~$Q$. We can write $x^{\varphi(\alpha)}=x_0$.
\end{proof}

As a consequence of Theorem \ref{Indleq1} and the foregoing lemma we obtain

\begin{theo}\label{Indeq1}
Let\/ $k\geq 1$. Suppose that\/ $X$ and\/ $Y$ are non-empty compact spaces. If\/ $X$ is a Fr\'echet space, $\dstr{\partial\Delta^k} X=1$, and\/ $\dInd{\partial\Delta^k}X=\dInd{\partial\Delta^k} Y$,
then
\parbox{13cm}{$$\dInd{\partial\Delta^k} Z(X,Y)=\dInd{\partial\Delta^k} X+1.$$}\qed
\end{theo}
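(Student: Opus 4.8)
The plan is to split the proof into the two inequalities, of which only one requires any real work. For the upper bound, note first that $\dstr{\partial\Delta^k} X=1$ forces $\alpha:=\dInd{\partial\Delta^k} X$ to be a positive natural number (by the convention that $\dstr{K}X=0$ whenever $\dInd{K}X$ is $-1$, $0$, or $\infty$), and $\dInd{\partial\Delta^k} Y=\alpha$ by hypothesis; hence Theorem \ref{Indleq1} gives at once $\dInd{\partial\Delta^k} Z(X,Y)\leq\max\{\alpha+1,\alpha\}=\alpha+1$. It therefore remains only to prove $\dInd{\partial\Delta^k} Z(X,Y)\not\leq\alpha$: since $Z(X,Y)$ is a non-empty compact (hence normal) space whose dimension is already known to be $\leq\alpha+1$, establishing $\not\leq\alpha$ pins the value down to exactly $\alpha+1$.

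For the lower bound, the idea is to exhibit a single closed $\partial\Delta^k$-tuple of $Z(X,Y)$ all of whose $\partial\Delta^k$-partitions have $\dInd{\partial\Delta^k}\geq\alpha$. First I would use $\dstr{\partial\Delta^k} X=1$ to fix a closed $\partial\Delta^k$-tuple $\mathcal{F}$ of $X$ witnessing strong $\alpha$-dimensionality, i.e.\ such that every $\partial\Delta^k$-neighbourhood $\mathcal{U}$ of $\mathcal{F}$ with $\dInd{\partial\Delta^k}(X\setminus\bigcup\mathcal{U})<\alpha$ has $\dobs{\partial\Delta^k}\mathcal{U}\neq\emptyset$; this is precisely the standing assumption on $\mathcal{F}$ in Lemma \ref{big-partition2}. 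Then I would put $\mathcal{G}=\pi_X^{-1}(\mathcal{F})|H(\mu)$ and note that, because $H(\mu)$ is closed in $Z(X,Y)$ and $\pi_X|H(\mu)$ is a homeomorphism onto $X$, the tuple $\mathcal{G}$ is a closed $\partial\Delta^k$-tuple of $Z(X,Y)$.

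Next, for an arbitrary $\partial\Delta^k$-partition $Q$ in $Z(X,Y)$ for $\mathcal{G}$, I would invoke Lemma \ref{big-partition2}, which yields the dichotomy: either $\dInd{\partial\Delta^k}(Q\cap H(\mu))=\alpha$, or $Q$ contains a set $\pi_{A_\mathfrak{m}}^{-1}(\beta)\cap\pi_X^{-1}(x^{\varphi(\beta)})$ homeomorphic to $Y$ for some $\beta\neq\mu$. In the first case $Q\cap H(\mu)$ is a closed subspace of $Q$, so monotonicity of $\dInd{\partial\Delta^k}$ on closed subspaces (used exactly as in the proofs of Lemmas \ref{Ind-raise2} and \ref{Indleq1}) gives $\dInd{\partial\Delta^k} Q\geq\alpha$; in the second case the copy of $Y$ is closed in $Q$, whence $\dInd{\partial\Delta^k} Q\geq\dInd{\partial\Delta^k} Y=\alpha$. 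In both cases $\dInd{\partial\Delta^k} Q\geq\alpha$, and since $Q$ was an arbitrary $\partial\Delta^k$-partition for $\mathcal{G}$, the definition of $\dInd{\partial\Delta^k}$ (Definition \ref{def3}) gives $\dInd{\partial\Delta^k} Z(X,Y)\not\leq\alpha$. Combined with the upper bound, $\dInd{\partial\Delta^k} Z(X,Y)=\alpha+1$.

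The substantive content of the argument has already been absorbed into Lemma \ref{big-partition2} (and, behind it, Lemma \ref{new-projection} together with the combinatorics of $\dobs{\partial\Delta^k}$ via Lemma \ref{obs}); given that lemma, the remaining points are purely bookkeeping, and the only one demanding a moment's care is verifying that the witnessing assumption on $\mathcal{F}$ coming from $\dstr{\partial\Delta^k} X=1$ matches the hypothesis of Lemma \ref{big-partition2} verbatim --- in particular that $\dstr{\partial\Delta^k}X=1$ guarantees $\alpha$ is a positive natural number, so that both ``$\alpha+1$'' and the side condition ``$<\alpha$'' in the lemma are meaningful, and that $\mathcal{G}$ is a genuine closed $\partial\Delta^k$-tuple of $Z(X,Y)$ so that it may legitimately be fed to the lemma.
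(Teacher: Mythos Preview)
Your proof is correct and follows exactly the route the paper intends: the paper simply records the theorem as ``a consequence of Theorem \ref{Indleq1} and the foregoing lemma'' (Lemma \ref{big-partition2}) with no further details, and you have supplied those details accurately. One minor slip: $\dstr{\partial\Delta^k}X=1$ only forces $\alpha=\dInd{\partial\Delta^k}X$ to be a positive \emph{ordinal} (the paper's transfinite $\dInd{K}$ takes ordinal values and $\dstr{K}X=0$ is stipulated only for $\alpha\in\{-1,0,\infty\}$), not necessarily a natural number---but this does not affect your argument, since $\alpha+1$ and the condition ``$<\alpha$'' are perfectly meaningful for ordinals.
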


Lemma \ref{big-partition2} and Theorem \ref{Indeq1} hold for each simplicial complex $K$ (a similar proof with a more complicated
description of the set $\dobs{K}\mathcal{U}$ for arbitrary~$K$).

The following corollary results from the statements \ref{big-space1}, \ref{Indleq1}, and \ref{Indeq1}.

\begin{cor}\label{indeq2}
Let $k\geq 2$ and $n\geq 1$. If $X$ is a compact metric space such that\/ $\dim X=k(n+1)-1$, then
$$\dInd{\partial\Delta^k} Z(X,Y)=n+1 \mbox{ \ \ and \ \ }\dstr{\partial\Delta^k} Z(X,Y)=0$$
for every compact space $Y$ with $\dInd{\partial\Delta^k} Y=n$.\qed
\end{cor}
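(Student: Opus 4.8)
The plan is to assemble the corollary directly from Propositions/Theorems \ref{big-space1}, \ref{Indleq1}, and \ref{Indeq1}; since all the genuine combinatorial work has been carried out there, the task here is essentially to check that the hypotheses line up and to apply the three results in the right order.

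First I would feed $X$ into Proposition \ref{big-space1}. As $X$ is a compact metric space with $\dim X = k(n+1)-1$ and $n\geq 1$, the ``in particular'' part of that proposition gives $\dInd{\partial\Delta^k} X = n$ and $\dstr{\partial\Delta^k} X = 1$ at once. I would also record that $X$, being metrizable, is first countable and hence Fr\'echet, so that $X$ meets the standing assumptions of Theorem \ref{Indeq1}.

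Next, given any non-empty compact $Y$ with $\dInd{\partial\Delta^k} Y = n = \dInd{\partial\Delta^k} X$, all hypotheses of Theorem \ref{Indeq1} hold ($X$ compact and Fr\'echet, $\dstr{\partial\Delta^k} X = 1$, equal $\partial\Delta^k$-inductive dimensions), so it yields $\dInd{\partial\Delta^k} Z(X,Y) = \dInd{\partial\Delta^k} X + 1 = n+1$. Finally, for the dimensional strength degree I would invoke the third clause of Theorem \ref{Indleq1}: here $k\geq 2$ and $\dInd{\partial\Delta^k} Y = n < n+1 = \dInd{\partial\Delta^k} X + 1 = \dInd{\partial\Delta^k} Z(X,Y)$, which is exactly its hypothesis, whence $\dstr{\partial\Delta^k} Z(X,Y) = 0$.

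The only real subtlety --- the closest thing to an obstacle --- is this order of application: the strength-degree clause of Theorem \ref{Indleq1} takes the already-established value $\dInd{\partial\Delta^k} Z(X,Y) = n+1$ as an input, so Theorem \ref{Indeq1} must be applied first; one cannot read off both conclusions from \ref{Indleq1} alone, and one must keep in mind that $\dstr{\partial\Delta^k}$ is genuinely defined here because $n+1$ is a finite positive ordinal.
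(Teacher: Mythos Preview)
Your proposal is correct and follows exactly the route indicated in the paper, which simply cites statements \ref{big-space1}, \ref{Indleq1}, and \ref{Indeq1}. Your observation about the order of application---that Theorem \ref{Indeq1} must be invoked before the third clause of Theorem \ref{Indleq1}, since the latter takes the equality $\dInd{\partial\Delta^k} Z(X,Y)=\dInd{\partial\Delta^k} X+1$ as a hypothesis---is the one point worth making explicit, and you have done so.
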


\begin{cor}\label{example1}
Let $k,n\geq 1$. If\/ $C$ is a metric continuum with\/ $\dim C=k(n+1)-1$, then $X_C=Z(C,C)$ is a Fr\'echet compact space such that
\begin{lister}
\item $\ddim{\partial\Delta^k} X_C=n$,
\item $\dInd{\partial\Delta^k} X_C=\dInd{|\partial\Delta^k|} X_C=n+1$,
\item $\dstr{\partial\Delta^k} X_C=0$ whenever $k\geq 2$,
\item every component of $X_C$ is homeomorphic to $C$.
\end{lister}
\end{cor}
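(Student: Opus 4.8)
The plan is to read the corollary off from the structural properties of $Z(X,Y)$ in Section \ref{plank} together with the dimension (in)equalities of Sections \ref{general} and \ref{fifth}; no new construction is needed. First I would pin down the dimensions of $C$ itself. Since $\dim C=k(n+1)-1$ satisfies $kn\leq\dim C<k(n+1)$, two applications of Theorem \ref{equiv} give $\ddim{|\partial\Delta^k|}C=n$, and then, $C$ being metrisable with $\ddim{\partial\Delta^k}C$ finite, the last assertion of Theorem \ref{gen-relations} makes all four of $\ddim{\partial\Delta^k}C$, $\ddim{|\partial\Delta^k|}C$, $\dInd{\partial\Delta^k}C$, $\dInd{|\partial\Delta^k|}C$ equal to~$n$. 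Statement (d) is then immediate from Proposition \ref{general-zxy}: $C$ is a metric continuum, hence a compact, connected Fr\'echet space, so $X_C=Z(C,C)$ is compact and Fr\'echet, and each of its components is homeomorphic to~$C$.

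For (a) I would combine Theorem \ref{gen-relations} with Lemma \ref{ddim}: $\ddim{\partial\Delta^k}X_C=\ddim{|\partial\Delta^k|}Z(C,C)=\ddim{|\partial\Delta^k|}C=n$.

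The substance of the corollary is (b). For the first equality I would apply Proposition \ref{big-space1} with $m=kn-1\geq 0$, obtaining $\dstr{\partial\Delta^k}C=1$; since $C$ is a compact Fr\'echet space and the hypothesis $\dInd{\partial\Delta^k}X=\dInd{\partial\Delta^k}Y$ of Theorem \ref{Indeq1} holds trivially for $X=Y=C$, that theorem gives $\dInd{\partial\Delta^k}X_C=\dInd{\partial\Delta^k}C+1=n+1$. For the second equality I would argue exactly as in the proof of Corollary \ref{example2}(b), with $L=|\partial\Delta^k|$: Lemma \ref{kc} shows that $K(C)$ is non-empty and $\dInd{|\partial\Delta^k|}_{b+}K(C)=n\geq 1$ for $b\in K(C)$, whence $\dInd{|\partial\Delta^k|}K(C)\geq 1$, and Theorem \ref{Ind-raise3} applied with $X=Y=C$ yields $\dInd{|\partial\Delta^k|}X_C=\dInd{|\partial\Delta^k|}C+1=n+1$. (One could instead note that ``$\geq n+1$'' is free from the inequality $\dInd{\partial\Delta^k}\leq\dInd{|\partial\Delta^k|}$ of Theorem \ref{gen-relations} together with the first equality of (b), while ``$\leq n+1$'' follows from Lemma \ref{ANR-leq}.) Finally, for (c) with $k\geq 2$ one has $\dInd{\partial\Delta^k}C=n<n+1=\dInd{\partial\Delta^k}C+1=\dInd{\partial\Delta^k}Z(C,C)$, so the third assertion of Theorem \ref{Indleq1} (equivalently, Corollary \ref{indeq2}) gives $\dstr{\partial\Delta^k}X_C=0$.

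I do not expect a real obstacle; the only point deserving emphasis is why, in contrast with Corollary \ref{example2}, the two inductive dimensions in (b) coincide rather than differ. It is precisely the equality $\dstr{\partial\Delta^k}C=1$ supplied by Proposition \ref{big-space1} that forces the simplicial-complex dimension to rise under $Z(C,C)$ as well, so that $\dInd{\partial\Delta^k}X_C$ and $\dInd{|\partial\Delta^k|}X_C$ both land at~$n+1$. A second, minor thing to verify is the boundary case $k=1$, where $\partial\Delta^1=\{0,1\}$ and all the invariants reduce to $\dim$ and $\trInd$: Proposition \ref{big-space1}, Theorem \ref{Indeq1} and Lemma \ref{kc} are all stated for $k\geq 1$, so the argument above goes through unchanged, and (c) is simply not asserted in that case.
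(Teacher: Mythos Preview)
Your proof is correct and follows essentially the same route as the paper: Proposition~\ref{big-space1} for $\dstr{\partial\Delta^k}C=1$, Theorem~\ref{Indeq1} for the first equality in (b), Theorem~\ref{gen-relations} with Lemma~\ref{ddim} for (a), Corollary~\ref{indeq2} (equivalently the third clause of Theorem~\ref{Indleq1}) for (c), and Proposition~\ref{general-zxy} for (d). The only minor difference is that for the second equality in (b) the paper uses the shorter alternative you mention parenthetically---``$\geq$'' from $\dInd{\partial\Delta^k}\leq\dInd{|\partial\Delta^k|}$ in Theorem~\ref{gen-relations} and ``$\leq$'' from Lemma~\ref{ANR-leq}---rather than invoking Lemma~\ref{kc} and Theorem~\ref{Ind-raise3}.
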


\begin{proof}
By Proposition \ref{big-space1}, we obtain $\dInd{\partial\Delta^k} C=n$ and $\dstr{\partial\Delta^k} C=1$. The statement (a) results
from Theorem \ref{gen-relations} and Lemma \ref{ddim}, and (b) is a corollary to the statements \ref{Indeq1}, \ref{gen-relations}, and
\ref{ANR-leq}. Corollary \ref{indeq2} implies (c), and the application of Proposition \ref{general-zxy} completes the proof.
\end{proof}

\begin{rem}\em \label{rema}
(a) In the above Corollary \ref{example1}, the metrisable components $P$ of $Z(C,C)$ have $\dInd{\partial\Delta^k} P$~$=$
$\dInd{|\partial\Delta^k|} P$~$=$ $n$~$<n+1$~$=$ $\dInd{\partial\Delta^k} Z(C,C)$~$=$ $\dInd{|\partial\Delta^k|} Z(C,C)$. Thus,
$\dInd{\partial\Delta^k}$ and $\dInd{|\partial\Delta^k|}$ analogues of Theorem \ref{compon1} do not hold. This is no surprise because
there is not such an analogue for the large inductive dimension $\Ind$ (Chatyrko \cite{chat-08}; see also Krzempek~\cite{krz-10a}).

(b) Spaces similar to $Z(C,C)$ in Corollary \ref{example1} are constructed by Chatyrko~\cite{chat-08} for $k\!=\!n\!=\!1$ and
$C\!=\![0,1]$. The spaces have $\dim\! =\!1$, $\ind\!=\!\Ind\!=\!2$, and each of their components is either a singleton or a subspace
homeo\-morph\-ic to $[0,1]$. Also for $k=1$ and each integer $n>1$, similar spaces have been expected in \cite[Remark 5.1]{chat-08}.
We believe that if $X$ is a compact metric space with $\dim X=k(n+1)-1$, where $k,n\geq 1$, then $Z(X,X)$ contains compact subspaces
$Q\subset P$ such that $\dInd{\partial\Delta^k} Q=\dInd{|\partial\Delta^k|} Q=n$, $\dInd{\partial\Delta^k} P=\dInd{|\partial\Delta^k|}
P=n+1$, and $P\setminus Q$ is a discrete space of cardinality~$\mathfrak{c}$ (cf.\ \cite{chat-08}, a construction for $k=n=1$ and
$\Ind$).

(c) Suppose that $X$ is a compact metric space with $\dim X=k(n+1)-1$. Then $\ddim{|\partial\Delta^k|} X=n$, and $X$ is the union of
pairwise disjoint subspaces $X_0,\ldots,X_n$ with $\ddim{|\partial\Delta^k|} X_i=0$ for $i=0,\ldots,n$ (Fedorchuk \cite[Corollary
5.16]{fed-09}). Consider $Z(X,X)$ and its compact subspaces
$$Z_i=H(\mu)\cup\bigcup\{H(\alpha)\colon \varphi(\alpha) \mbox{ is finite or its unique accumulation point is in $X_i$}\}$$
for $i=0,\ldots,n$. Evidently $Z(X,X)=Z_0\cup\ldots\cup Z_n$. We shall sketch a proof of the equalities $\dInd{\partial\Delta^k}
Z_i=\dInd{|\partial\Delta^k|} Z_i=n$ for $i=0,\ldots,k$. Therefore, the space $Z(X,X)$ with
$\dInd{\partial\Delta^k}Z(X,X)=\dInd{|\partial\Delta^k|}Z(X,X)=n+1$ is the union of $n+1$ closed subspaces $Z_i$ with
$\dInd{\partial\Delta^k} Z_i=\dInd{|\partial\Delta^k|} Z_i=n$. This is similar to the properties of several well-known spaces (for
instance, Lokucievski\u\i 's Example 2.2.14 in \cite{eng-95}, Chatyrko's spaces in~\cite{chat-08}, Charalambous and Chatyrko's
examples for the dimension $\Indo$ in~\cite{chch-05}).

We have $n=\dInd{\partial\Delta^k} X\leq \dInd{\partial\Delta^k} Z_i\leq\dInd{|\partial\Delta^k|} Z_i$. The dimension $\dIndo{M}$
modulo a simplicial complex $M$ [respectively: modulo an ANR $M$\/] is defined similarly as $\dInd{M}$---in order that $\dIndo{M} X
\leq\alpha$ we stipulate that the $M$-partition~$P$ in the statement \ref{def3}(b) [respectively: \ref{def4}(b')] is a zero set with
$\dIndo{M} P<\alpha$ (see \cite[p.\ 670]{chk-12}). It is easily shown by transfinite induction that $\dInd{M} \leq\dIndo{M}$, and
Theorem 1 in \cite{chk-12} may be summarised as follows: {\em $\dIndo{K}=\dIndo{|K|}$ for any simplicial complex~$K$ and all normal
spaces.} Thus, we have $n$~$\leq$ $\dInd{|\partial\Delta^k|} Z_i$~$\leq$ $\dIndo{|\partial\Delta^k|} Z_i$~$=$
$\dIndo{\partial\Delta^k} Z_i$. It is sufficient to show that $\dIndo{\partial\Delta^k} Z_i\leq n$.

We need the following claim: {\em For each closed $\partial\Delta^k$-tuple $\mathcal{F}$ of $X$, there exists a
$\partial\Delta^k$-partition~$P$ disjoint from $X_i$.} Indeed, Lemma 6 in~\cite{chk-12} directly yields a map $f\colon
\bigcup\mathcal{F}\to |\partial\Delta^k|$ with $F_j\subset f^{-1}(K_j)$ for $j=0,\ldots,k$ (see the definition of $K_j$'s before Lemma
\ref{extend} herein). By Fedorchuk's \cite[Proposition 2.7]{fed-10b}, there is a $\partial\Delta^k$-parti\-tion $P$ for $f$ disjoint
from $X_i$, and hence, $f$ has an extension $f'\colon X\setminus P\to |\partial\Delta^k|$.\linebreak Since the sets $K_j'=\{x\in
|\partial\Delta^k|\colon x_j>0\}$ form a $\partial\Delta^k$-neigh\-bour\-hood $\mathcal{K}'$ of~$\mathcal{K}$, we can take the
pre-image $\partial\Delta^k$-tuple $f'^{-1}(\mathcal{K}')$. Thus, $P$ is a $\partial\Delta^k$-partition for~$\mathcal{F}$. Using the
above claim, remembering that each closed subset of $X$ is a zero subset, and modifying the proof of Theorem \ref{Indleq1}, one can
show that each closed $\partial\Delta^k$-tuple of $Z_i$ has a metrisable zero $\partial\Delta^k$-partition $P$ in $Z_i$ with
$\dInd{\partial\Delta^k} P$~$=$ $\dIndo{\partial\Delta^k} P<n$. This means that $\dIndo{\partial\Delta^k} Z_i\leq n$.

(d) Let $T\!=\!Z(C,C)$ be the space in Corollary \ref{example1}. If $k\!\geq\! 2$, then $\dstr{\partial\Delta^k} T\!=\!0$, and we
obtain $\dInd{\partial\Delta^k} Z(T,T)=n+1$ by Theorem \ref{Indeq1}. In the proof of Theorem \ref{ANR-main} we iterate the operation
$Z(X,Y)$. In the case of $\dInd{\partial\Delta^k}$ for $k\geq 2$, we do not know whether $\dstr{\partial\Delta^k} Z(T,T)=0$ or
$\dstr{\partial\Delta^k} Z(T,T)=1$. In consequence, {\em for $k\geq 2$ we do not know if the operation $Z(X,Y)$ allows us to construct
compact spaces $X$ with metrisable components and $\dInd{\partial\Delta^k} X>\ddim{\partial\Delta^k} X+1$.}
\end{rem}

\section{Conclusion and open problems}

The theories of inductive dimensions investigate problems which involve partitioning a given space in some admissible ways. The
following two questions arise. (1)~What closed subsets are sufficient or large enough to partition the space in all considered
circumstances/ways? (2)~How large closed subsets are necessary to partition the space? In the case of $\dInd{L}$ and $\dInd{K}$ of
$Z(X,Y)$, it is sufficient to consider $L$-partitions and $K$-partitions which are finite disjoint unions described by formulas
(\ref{1}) and~(\ref{2}) on pp.~\pageref{1} and~\pageref{2}. An answer to the latter question is stated by the alternatives (a) or (b)
of Lemmas \ref{Ind-raise1} and~\ref{big-partition2}.

In Sections \ref{fourth} and \ref{fifth} we have drawn up two maps of the $Z(X,Y)$ spaces' land. The difference between the maps has
enabled us to detect compact Fr\'echet spaces with $\dInd{\partial\Delta^k}<\dInd{|\partial\Delta^k|}$ (Corollary \ref{example2}). We
have found a quite exhaustive solution to the Problem stated in the Introduction in the case of $\dInd{L}$, where $L$ is a compact
metric ANR: for arbitrarily large ordinals $\alpha\geq n$, we have constructed compact Fr\'echet spaces with $\ddim{L}=n$,
$\dInd{L}=\alpha$, and all components metrisable (see Corollary \ref{ANR-cor} for necessary obstructions). In the case of $\dInd{K}$,
where $K$ is a finite simplicial complex, we have succeeded only for $K=\partial\Delta^k$ and $\alpha=n+1$ (Corollary \ref{example1}).
Crucial properties of $\dInd{\partial\Delta^k}$ and $\dstr{\partial\Delta^k}$ may be summarised as follows (Propositions
\ref{big-space1}, \ref{big-space2}, and Theorems \ref{Indleq1}, \ref{Indeq1}).

\begin{theo}
Let\/ $k,n\geq 1$ be integers. Suppose that\/ $X$ and\/ $Y$ are non-empty compact spaces,\/ $X$ is Fr\'echet, and\/
$\dInd{\partial\Delta^k} X=\dInd{\partial\Delta^k} Y$. Then the following implications hold.

\medskip

\noindent
\begin{minipage}[b]{30cc}
\begin{center}
\begin{tabular}{cc}
\parbox{15cc}{\center{$\dim X=kn$, where $k\geq 2$}} & \parbox{15cc}{\center{$\dim X=k(n+1)-1$}} \\
\parbox{15cc}{\center{$\Downarrow$}} & \parbox{15cc}{\center{$\Downarrow$}}\\
\parbox{15cc}{\center{$\dInd{\partial\Delta^k}X=n$ and\/ $\dstr{\partial\Delta^k} X=0$}} & \parbox{15cc}{\center{$\dInd{\partial\Delta^k}X=n$ and\/ $\dstr{\partial\Delta^k} X=1$}}\\
\parbox{15cc}{\center{$\Downarrow$}} & \parbox{15cc}{\center{$\Downarrow$}}\\
\parbox{15cc}{\center{$\dInd{\partial\Delta^k}Z(X,Y)=n$}} & \parbox{15cc}{\center{$\dstr{\partial\Delta^k}Z(X,Y)=0$ unless $k=1$,\\ and\/ $\dInd{\partial\Delta^k}Z(X,Y)=n+1$.}}
\end{tabular}
\end{center}
\end{minipage}

\vspace*{-.9em}\qed
\end{theo}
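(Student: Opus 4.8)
The plan is to assemble the theorem directly from the four cited results, reading the two columns of implications off in order. In both columns the topmost implication — from a hypothesis on $\dim X$ to the values of $\dInd{\partial\Delta^k} X$ and $\dstr{\partial\Delta^k} X$ — is immediate: for the left column it is precisely the second assertion of Proposition~\ref{big-space2} (which needs $k\geq 2$), and for the right column the second assertion of Proposition~\ref{big-space1} (valid already for $k\geq 1$), both applied to $X$ (the covering-dimension hypotheses are understood in the metric setting of those propositions). So after this step I may assume, in the left column, $\dInd{\partial\Delta^k} X=n$ and $\dstr{\partial\Delta^k} X=0$, and in the right column, $\dInd{\partial\Delta^k} X=n$ and $\dstr{\partial\Delta^k} X=1$.

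Next I would deduce the bottom implications using the hypothesis $\dInd{\partial\Delta^k} X=\dInd{\partial\Delta^k} Y$, which now reads $\dInd{\partial\Delta^k} Y=n$. For the left column, since $\dstr{\partial\Delta^k} X=0$ the middle assertion of Theorem~\ref{Indleq1} gives $\dInd{\partial\Delta^k} Z(X,Y)=\max\{\dInd{\partial\Delta^k} X,\dInd{\partial\Delta^k} Y\}=n$. For the right column, since $X$ is Fr\'echet, $\dstr{\partial\Delta^k} X=1$, and $\dInd{\partial\Delta^k} X=\dInd{\partial\Delta^k} Y$, Theorem~\ref{Indeq1} yields $\dInd{\partial\Delta^k} Z(X,Y)=\dInd{\partial\Delta^k} X+1=n+1$; this uses only $k\geq 1$. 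Finally, when $k\geq 2$ we have $\dInd{\partial\Delta^k} Y=n<n+1=\dInd{\partial\Delta^k} X+1=\dInd{\partial\Delta^k} Z(X,Y)$, so the last assertion of Theorem~\ref{Indleq1} gives $\dstr{\partial\Delta^k} Z(X,Y)=0$ — this is the sole place the clause ``unless $k=1$'' enters.

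There is essentially no obstacle: the whole content sits in Propositions \ref{big-space1}--\ref{big-space2} and Theorems \ref{Indleq1}--\ref{Indeq1}, and the proof is bookkeeping about matching hypotheses. The two points I would state explicitly are that the $\dstr{\partial\Delta^k} X=0$ branch uses the \emph{equality} in the middle assertion of Theorem~\ref{Indleq1} (not merely the inequality of its first assertion), and that both the ``$\dstr{\partial\Delta^k} X=1$'' input (Proposition~\ref{big-space1} gives $\dstr{\{0,1\}}=1$ for $k=1$ as well, but) and the ``$\dstr{\partial\Delta^k} Z(X,Y)=0$'' conclusion for $Z(X,Y)$ genuinely require $k\geq 2$, since the last clause of Theorem~\ref{Indleq1} does.
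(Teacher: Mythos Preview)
Your proposal is correct and matches the paper's own treatment exactly: the theorem is presented as a summary, with the paper simply citing Propositions~\ref{big-space1}--\ref{big-space2} and Theorems~\ref{Indleq1}--\ref{Indeq1} and marking it \qed. Your bookkeeping of which result feeds which implication is accurate, including the observation that only the last clause of Theorem~\ref{Indleq1} forces the restriction $k\geq 2$ in the bottom-right conclusion.
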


The specific question we are not able to answer is

\begin{quest}
Is it true that\/ $\dstr{\partial\Delta^2} Z([0,1]^2,[0,1]^2)=1$?
\end{quest}

An answer in the affirmative would give us hopes for finding a proof of the equality $\dstr{\partial\Delta^2} Z(T,T)=1$, where
$T=Z([0,1]^3,[0,1]^3)$. Having such a proof, we could apply Theorem \ref{Indeq1} to $X=Y=Z(T,T)$, and state a positive answer to

\begin{quest}
Do there exist a simplicial complex $K$ and a compact space $X$ such that the underlying polyhedron $|K|$ is connected, $\dInd{K} X
>\ddim{K} X+1$, and each component of $X$ is metrisable?
\end{quest}

\end{document}